\documentclass[12pt,a4paper]{amsart}
\usepackage{a4wide}
\usepackage{amsmath, amssymb, amsfonts,enumerate}
\usepackage[all]{xy}
\usepackage{amscd}
\usepackage{comment}
\usepackage{mathtools}
\usepackage{enumitem}
\usepackage{tikz} 
\usepackage{url}

\newtheorem{theorem}{Theorem}[section]
\newtheorem{lemma}[theorem]{Lemma}
\newtheorem{corollary}[theorem]{Corollary}
\newtheorem{proposition}[theorem]{Proposition}

 \theoremstyle{definition}
 \newtheorem{definition}[theorem]{Definition}
 \newtheorem{remark}[theorem]{Remark}

 \newtheorem{example}[theorem]{Example}

\numberwithin{equation}{section}
\newcommand {\N}{\mathbb{N}} 
\newcommand {\Z}{\mathbb{Z}} 
\newcommand {\R}{\mathbb{R}} 
\newcommand {\Q}{\mathbb{Q}} 
\newcommand {\C}{\mathbb{C}} 
\newcommand{\F}{\mathbb{F}}

\newcommand{\DD}{\mathcal{D}}

\newcommand{\FF}{\mathcal{F}}

\newcommand{\OO}{\mathcal{O}}
\newcommand{\PP}{\mathcal{P}}

\newcommand{\A}{\mathbb{A}} 


\newcommand{\Sch}{\mathrm{Sch}}
\newcommand{\Proj}{\mathbb{P}}

   \DeclareMathOperator{\GL}{GL}

\DeclareMathOperator{\Fix}{Fix}

\DeclareMathOperator{\CA}{CA}

\DeclareMathOperator{\Mor}{Mor}

\DeclareMathOperator{\Id}{Id}

\DeclareMathOperator{\Spec}{Spec}

\begin{document}
\title{On injective endomorphisms of symbolic schemes}
\author[T.Ceccherini-Silberstein]{Tullio Ceccherini-Silberstein}
\address{Dipartimento di Ingegneria, Universit\`a del Sannio, C.so
Garibaldi 107, 82100 Benevento, Italy}
\email{tullio.cs@sbai.uniroma1.it}
\author[M.Coornaert]{Michel Coornaert}
\address{Universit\'e de Strasbourg, CNRS, IRMA UMR 7501, F-67000 Strasbourg, France}
\email{michel.coornaert@math.unistra.fr}
\author[X.K.Phung]{Xuan Kien Phung}
\address{Universit\'e de Strasbourg, CNRS, IRMA UMR 7501, F-67000 Strasbourg, France}
\email{phung@math.unistra.fr}
\subjclass[2010]{37B15, 14A10, 14A15, 37B10,       68Q80}
\keywords{cellular automaton, scheme, algebraic variety, closed image property, surjunctivity, reversibility, invertibility}
\begin{abstract}
Building on the seminal work of Gromov on endomorphisms of symbolic algebraic varieties~\cite{gromov-esav},
we introduce a notion of cellular automata over schemes which generalize affine algebraic cellular automata in \cite{cc-algebraic-ca}. 
We extend known results to this more general setting. We also establish several new ones regarding the closed image property, surjunctivity, reversibility, and invertibility for cellular automata over algebraic varieties with coefficients in an algebraically closed field.
As a byproduct, we obtain a negative answer to a question raised in~\cite{cc-algebraic-ca} on the existence of a bijective complex affine algebraic cellular automaton $\tau \colon A^\Z \to A^\Z$ whose inverse is not algebraic. 
\end{abstract}
\date{\today}
\maketitle

\setcounter{tocdepth}{1}
\tableofcontents

\section{Introduction}

The goal of the present paper is to investigate certain properties of cellular automata over schemes. 
In the setting and terminology of Gromov~\cite{gromov-esav}, these cellular automata are the
``endomorphisms of symbolic schemes'', that is, the proregular maps of projective limits over a directed system of schemes. 
However, here we use the standard notation and terminology coming from symbolic dynamics, which is well adapted to our investigations
and techniques, see, e.g.,~\cite{ca-and-groups-springer}.
We introduce a very general definition for such cellular automata (see Definition~\ref{def:ca-over-schemes} below).
The ambient universe is always a group and the local defining rules are assumed to be invariant under left multiplication and induced by scheme morphisms.  
Under certain hypotheses on the group and the schemes involved,
we obtain surjunctivity, reversibility, and invertibility  results for the corresponding cellular automata. 
\par 
Let us first recall some basic notions about cellular automata and introduce notation. 
\par
Let  $A$ be a set, called the \emph{alphabet},  and let $G$ be a group, called the \emph{universe}.
The set $A^G = \{c \colon G \to A\}$, consisting of all maps from $G$ to $A$,  is called the set of \emph{configurations} over the group $G$ and the alphabet $A$.
 The action of $G$ on itself by left multiplication induces an action of $G$ on $A^G$ defined by
 $(g,c) \mapsto gc$, where
\[
(g c)(h) \coloneqq  c(g^{-1}h) \qquad   \text{for all  }  g,h \in G \text{  and  } c \in A^G.
\]
This action is called the $G$-\emph{shift} on $A^G$.
\par
Given a configuration $c \in A^G$ and a subset $\Omega \subset G$, the element $c\vert_\Omega \in A^\Omega$ defined by $c\vert_\Omega(g) = c(g)$ for all $g \in \Omega$ is called the 
\emph{restriction} of the configuration  $c$ to $\Omega$.  
\par
A \emph{cellular automaton} over the group $G$ and the alphabet $A$ is a map 
$\tau \colon A^G \to A^G$ satisfying the following condition: 
there exist a finite subset $M \subset G$ 
and a map $\mu_M \colon  A^M \to A$ such that 
\begin{equation} 
\label{e;local-property}
(\tau(c))(g) = \mu_M((g^{-1} c )\vert_M)  \quad  \text{for all } c \in A^G \text{ and } g \in G
\end{equation}
(see e.g.~\cite{ca-and-groups-springer}).
 Such a set $M$ is then called a \emph{memory set} of $\tau$ and $\mu_M$ is called the \emph{local defining map}    associated with $M$.
  \par
The class of algebraic cellular automata was introduced and investigated in~\cite{cc-algebraic-ca}
after the seminal work of Gromov~\cite{gromov-esav} we alluded to above. 
Suppose that $K$ is a field and $V$ is an \emph{affine algebraic set}  over $K$, i.e., 
$V \subset K^n$ for some integer $n \geq 1$ and $V$ is the set of common zeroes  of a family of polynomials in $n$ variables with coefficients in $K$.  
A cellular automaton $\tau \colon V^G \to V^G$ with alphabet $V$ is called an \emph{algebraic cellular automaton} if it admits a memory set $M \subset G$ such that the associated local defining map
$\mu_M \colon V^M \to V$ is \emph{regular}, i.e., the restriction of some polynomial map $(K^n)^M \to K^n$.  
\par
Recall that if $X$ and $Y$ are schemes based over a common base scheme $S$, the set of $Y$-points of $X$
is the set $X(Y)$ consisting of all morphisms of $S$-schemes from $Y$ to $X$
(see Section~\ref{sec:schemes} for the relevant background material on schemes).

\begin{definition}
\label{def:ca-over-schemes}
Let $G$ be a group 
and let $S$ be a  scheme.
Let $X$ and $Y$ be schemes over $S$
and let $A \coloneqq X(Y)$ denote the set of $Y$-points of $X$. 
A \emph{cellular automaton over the group $G$ and the $S$-scheme $X$ with coefficients in the $S$-scheme} $Y$ 
is a cellular automaton
$\tau \colon A^G \to A^G$ over the group $G$ and the alphabet $A$  
that admits a memory set $M$ such that  the associated local defining map
$\mu_M  \colon A^M \to A$ is induced by some $S$-scheme morphism
$f \colon X^M \to X$, where $X^M$ denotes the $S$-fibered product of a  family of copies of $X$ indexed by $M$.
In that case, one also says that $\tau$ is a \emph{cellular automaton over the group $G$ and the schemes  $S,X,Y$}.
  \end{definition}

It is not difficult to check that this definition makes sense. 
More specifically, observe that  one has $X^M(Y) = (X(Y))^M = A^M$  by the universal property of $S$-fibered products, so that any $S$-scheme morphism $f \colon X^M \to X$  induces a map 
$f^{(Y)} \colon A^M \to A$ defined by $f^{(Y)}(\varphi) = f \circ \varphi$ for every $S$-scheme morphism 
$\varphi \colon Y \to X^M$.
\par
Moreover, Definition~\ref{def:ca-over-schemes}   generalizes that  of an algebraic cellular automaton given in~\cite{cc-algebraic-ca}.
Indeed, if $K$ is a field and $V \subset K^n$ an algebraic set,
 there is an $S$-scheme $X$ associated with $V$ for $S = \Spec(K)$,
namely  $X = \Spec(K[u_1,\dots,u_n]/I)$, where $I = I(V)$ is the ideal of $K[u_1,\dots,u_n]$ consisting of all polynomials that identically vanish on $V$.
One then has $A = X(S) = V$.
On the other hand, 
the regular maps between two regular sets $V_1 \subset K^{n_1}$ and $V_2 \subset K^{n_2}$ 
are precisely those induced by the $S$-morphisms between their corresponding $S$-schemes $X_1$ and $X_2$, equivalently,  
the $K$-algebra morphisms from
 $K[z_1,\dots,z_{n_2}]/I(V_2)$ to $K[t_1,\dots,t_{n_1}]/I(V_1)$.
 Thus, $\tau \colon V^G \to V^G$ is an algebraic cellular automaton,
as defined in~\cite{cc-algebraic-ca},  if and only if
 $\tau$ is a cellular automaton in the sense of Definition~\ref{def:ca-over-schemes} 
 over the schemes $S, X,  Y$
for  $S = Y = \Spec(K)$ and $X$ is the $S$-scheme associated with $V$.  
  \par
  Actually, any cellular automaton $\tau \colon A^G \to A^G$ can be regarded, in a trivial way, as a cellular automaton  over suitably chosen schemes.
Indeed, suppose that $M$ is a memory set  for $\tau$ and let $\mu_M \colon A^M \to A$ denote the associated local defining map.
Let $K$ be a field and let $X$ denote  the discrete $K$-scheme obtained by taking a direct union of a family of copies of the spectrum of $K$ indexed by $A$ (note that $X$ is not of finite type when the set $A$ is infinite).
Let $Y$ denote the spectrum of an algebraic closure of $K$, viewed as a $K$-scheme.
Then $A = X(Y)$.
Moreover, $\mu_M$ is induced by a unique $K$-scheme morphism $f \colon X^M \to X$.
 \par
In the rest of the paper, we shall mainly consider cellular automata over schemes $S$, $X$, $Y$, where the scheme $S$ is the spectrum of some field $K$,
the scheme $X$ is an algebraic variety over $K$ (that is, a $K$-scheme of finite type), and the scheme $Y$ is the spectrum of an algebraically closed  extension of $K$.
\par    
Recall that a  group $G$ is called \emph{reisidually finite} if the intersection of the finite-index subgroups of $G$ is reduced to the identity element. 
Every finitely generated abelian group (e.g.~$G = \Z^d$) and, more generally, by a theorem due to Mal'cev, every finitely generated linear group is residually finite. 
Here, by a \emph{linear} group, we mean a group that is isomorphic to a subgroup of 
$\GL_n(K)$ for some integer $n \geq 1$ and some field $K$.  
It was observed by Lawton 
(cf.~\cite{gottschalk}, \cite{lawton}) that, wen $G$ is a residually finite group and  $A$ is a finite alphabet, every injective cellular automata $\tau \colon A^G \to A^G$ is surjective.
  This property of residually finite groups, known as \emph{surjunctivity}, was extended to the class of \emph{sofic}  groups by Gromov~\cite{gromov-esav} and Weiss~\cite{weiss-sgds}.
  The class of sofic groups contains in particular all residually amenable groups and is therefore much larger than the class of residually finite groups.
Actually, there is no example of a non-sofic group up to now and it is not known if all groups are surjunctive.
In~\cite[Corollary~1.2]{cc-algebraic-ca}, it was shown that if $K$ is an uncountable algebraically closed  field,  
$V \subset K^n$ is an affine algebraic set over $K$, and $G$ is a residually finite group,
then every injective algebraic cellular automaton $\tau \colon V^G \to V^G$ is surjective.
Other surjunctivity results for cellular automata may be found in~\cite{gromov-esav}, \cite{cc-israel}, \cite{cc-artinian}.
Given a property $(P)$ of groups, one says that a group $G$ is \emph{locally}  $(P)$ if every finitely generated subgroup of $G$ satisfies $(P)$.
A group is sofic if and only if it is locally sofic.
On the other hand, there are groups, such as the additive group $\Q$ of rational numbers, that are locally residually finite but not residually finite.
By Mal'cev's theorem, every linear group is locally residually finite.
\par
In the setting of Definition \ref{def:ca-over-schemes}, it is natural to expect the surjunctivity result of Corollary~1.2 in~\cite{cc-algebraic-ca} to hold for other classes of algebraic varieties. 
In the present paper, we shall establish the surjunctivity property in the case of complete algebraic varieties. 
This class is ``orthogonal" to the class of affine algebraic varieties in the sense that every complete affine algebraic variety is finite.
Furthermore, 
if the base field is uncountable, we can remove the completeness assumption and thus cover also Corollary~1.2 in~\cite{cc-algebraic-ca}. 
More precisely, we get the following result. 

\begin{theorem}
\label{t:surj-ca}
Let $G$ be a locally residually finite  group and
let $X$ be an algebraic variety over an algebraically closed field $K$. 
Let $A \coloneqq X(K)$ denote the set of $K$-points of $X$.
  Suppose that the variety $X$ is complete or that the field $K$ is uncountable. 
  Then every injective  cellular automaton 
$\tau \colon A^G\to A^G$ 
over the group $G$ and the $K$-scheme $X$  with coefficients in $K$ 
is surjective (and hence bijective). 
\end{theorem}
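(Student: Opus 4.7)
My approach follows the classical surjunctivity paradigm for residually finite groups, systematically lifted to the scheme-theoretic setting via $K$-points of fibered powers of $X$.

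\emph{Reduction.} Since $\tau$ is determined by its finite memory set $M \subset G$ and $G$ is locally residually finite, the subgroup $H \coloneqq \langle M \rangle$ is finitely generated residually finite. Choosing coset representatives $G = \bigsqcup_i g_i H$ and exploiting the $G$-equivariance of $\tau$, one sees that $\tau$ decomposes on each coset into an independent copy of a restricted cellular automaton $\tau_H \colon A^H \to A^H$ sharing the same memory set and local defining map; hence injectivity of $\tau$ descends to $\tau_H$, and surjectivity of $\tau_H$ lifts back to $\tau$. So I may assume $G$ itself is finitely generated residually finite.

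\emph{Surjectivity on periodic configurations.} Fix a decreasing sequence of finite-index normal subgroups $(G_n)_{n \geq 1}$ of $G$ with $\bigcap_n G_n = \{1\}$, and let $F_n \coloneqq G/G_n$. Since $\tau$ commutes with the $G$-shift, it preserves the set $\Fix(G_n) \subset A^G$ of $G_n$-periodic configurations, canonically identified with $A^{F_n} = X^{F_n}(K)$, and induces a still injective map $\tau_n \colon A^{F_n} \to A^{F_n}$ coming from a $K$-scheme morphism $f_n \colon X^{F_n} \to X^{F_n}$. The scheme $X^{F_n}$ is of finite type, and complete whenever $X$ is. I would then invoke the key auxiliary result of the paper (a scheme-theoretic form of Ax--Grothendieck): an endomorphism of an algebraic variety over an algebraically closed field $K$ which is injective on $K$-points is surjective on $K$-points whenever the variety is complete or $K$ is uncountable. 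Applied to $f_n$, this yields $\Fix(G_n) \subset \tau(A^G)$ for every $n$.

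\emph{From periodic to arbitrary configurations, and the main obstacle.} For every finite $\Omega \subset G$, choosing $n$ large enough that $\Omega$ injects into $F_n$ gives the equality $A^{F_n}|_\Omega = A^\Omega$, whence $\tau(A^G)|_\Omega = A^\Omega$. Thus $\tau$ has dense image in the prodiscrete topology on $A^G$. Combined with a closed image property for $\tau$ under our hypotheses (established separately in the paper), this forces $\tau(A^G) = A^G$. The delicate ingredient is precisely this closed image property: in the classical finite-alphabet setting it is automatic from compactness of $A^G$, whereas here, with the infinite alphabet $A = X(K)$, its proof differs between the two cases of the theorem --- properness of the pro-scheme projections between fibered powers of $X$ in the complete case, and a dimension-plus-cardinality exhaustion argument ruling out coverings by countable families of proper constructible subvarieties in the uncountable case.
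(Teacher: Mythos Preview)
Your outline is correct and mirrors the paper's argument almost exactly: reduce to a (finitely generated) residually finite group via restriction, realize the action on periodic configurations $\Fix(H)$ as the $K$-points of a $K$-scheme endomorphism of $X^{H\backslash G}$, apply Ax--Grothendieck to get surjectivity there, and then combine density of periodic configurations with the closed image theorem.

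One inaccuracy in your attribution of hypotheses is worth correcting. You invoke ``a scheme-theoretic form of Ax--Grothendieck'' that requires completeness of $X$ or uncountability of $K$, but no such hypothesis is needed at that step: the Ax--Grothendieck theorem used in the paper (Theorem~\ref{t:surj-grothendieck}) applies to any $K$-algebraic variety, and injectivity of the scheme endomorphism follows from injectivity on $K$-points because $K$-points are the closed points and algebraic varieties are Jacobson (Lemma~\ref{l:mor-var-closed-points}). The hypotheses ``$X$ complete or $K$ uncountable'' enter the proof \emph{only} through the closed image property (Theorem~\ref{t:closed-image}), exactly as you describe in your final paragraph; you have simply imposed them twice.
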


Given a group $G$ and a  set $A$,  a cellular automaton $\tau \colon A^G \to A^G$ is called \emph{reversible} if $\tau$ is bijective and its inverse map $\tau^{-1} \colon A^G \to A^G$ is also a cellular automaton. 
When the alphabet set $A$ is finite, 
it is well known that every bijective cellular automaton 
$\tau \colon A^G \to A^G$ is reversible 
(see e.g.~\cite[Theorem~1.10.2]{ca-and-groups-springer} for a topological proof).
In~\cite[Theorem~1.3]{cc-algebraic-ca}, it was shown that
if $K$ is an uncountable algebraically closed field and $V$ an affine algebraic set  over $K$,
then  every bijective algebraic cellular automaton $\tau \colon V^G \to V^G$ is reversible.
Here we shall prove the following result which extends Theorem~1.3 in~\cite{cc-algebraic-ca}.

\begin{theorem}
\label{t:main-reversible}
Let $G$ be a group and
let   $X$ be an algebraic variety over an uncountable algebraically closed field $K$.
Let $A \coloneqq X(K)$ denote the set of $K$-points of  $X$.  
 Then any bijective cellular automaton $\tau \colon  A^G\to A^G$ over the group $G$ and the  
 $K$-scheme    $X$ with coefficients in   $K$  is reversible.  
\end{theorem}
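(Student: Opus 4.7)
The plan is to exhibit a finite set $N \subset G$ and a scheme morphism $g \colon X^N \to X$ such that $\tau^{-1}$ coincides with the cellular automaton with memory $N$ and local defining map induced by $g$.

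First I would reduce to the case where $G$ is finitely generated. Let $M$ be a memory set of $\tau$ with associated scheme morphism $f \colon X^M \to X$, and let $H := \langle M \rangle$. The coset decomposition $G = \bigsqcup_i g_i H$ identifies $A^G$ with $\prod_i A^{g_i H}$ and identifies $\tau$ with a product of copies of a cellular automaton $\tau_H \colon A^H \to A^H$ defined by the same local data, so $\tau$ is bijective (respectively scheme-theoretically reversible) if and only if $\tau_H$ is. We may thus assume $G = H$ is finitely generated. Fix a finite symmetric generating set $S$ containing $M \cup \{1_G\}$ and write $B_n := S^n$. For each $n$, $f$ induces a scheme morphism $\tau_n \colon X^{B_{n+1}} \to X^{B_n}$ coordinate-wise; pairing it with the projection onto the $1_G$-coordinate produces $\Psi_n \colon X^{B_{n+1}} \to X \times X^{B_n}$, whose image $Z_n \subset X(K) \times X^{B_n}(K)$ is constructible by Chevalley's theorem. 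Bijectivity of $\tau$ makes the second projection $\pi_n \colon Z_n \to X^{B_n}(K)$ surjective. The problem thus reduces to finding $n$ such that $\pi_n$ is injective and the resulting set-theoretic inverse is induced by a scheme morphism $g \colon X^{B_n} \to X$.

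The heart of the proof, and the main obstacle, is establishing uniform injectivity of some $\pi_n$. For every $c \in A^G$ the fibers $F_n(p) := \{ a \in X(K) : (a,p) \in Z_n \}$ form a descending chain of nonempty constructible sets with $\bigcap_n F_n(c|_{B_n}) = \{\tau^{-1}(c)(1_G)\}$, so the intersection is always a singleton; what is needed is stabilization \emph{uniform} in $c$. Suppose the contrary: that for every $n$ there are $x_n \neq y_n$ in $X^{B_{n+1}}(K)$ with $\tau_n(x_n) = \tau_n(y_n)$ and $x_n(1_G) \neq y_n(1_G)$. The objective is to amalgamate these local witnesses into two distinct global configurations $x, y \in A^G$ with $\tau(x) = \tau(y)$, which would contradict injectivity. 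This amalgamation is where the uncountability of $K$ is essential: over an uncountable algebraically closed field, a countable union of proper constructible subsets of an irreducible $K$-variety cannot exhaust it, which provides a Baire-category type mechanism for selecting a coherent system of compatible witnesses. Adapting this mechanism, used for affine varieties in \cite[Theorem~1.3]{cc-algebraic-ca}, to a general (possibly non-affine) algebraic variety $X$, presumably by working inside an affine open cover of $X$ so that each witness lies in a chart, is the delicate technical core.

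Once uniform injectivity of some $\pi_n$ is secured, the set-theoretic inverse $g \colon X^{B_n}(K) \to X(K)$ has constructible graph $Z_n$. To upgrade $g$ to a scheme morphism, I would pass to the Zariski closure $\overline{Z_n}$ in $X \times X^{B_n}$, whose second projection is a surjective morphism of $K$-varieties bijective on $K$-points, and appeal to Zariski's Main Theorem (together with the standard analysis of bijective morphisms over algebraically closed fields) to identify the inverse, on a dense open of $X^{B_n}$, with the $K$-point map of a genuine scheme morphism. The shift-invariance of the cellular-automaton construction should then allow one to cover $X^{B_n}$ by translates of this open and glue to a global scheme morphism $g \colon X^{B_n} \to X$. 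This $g$ is the scheme-theoretic local defining map of $\tau^{-1}$ with memory $B_n$, so $\tau$ is reversible in the sense of Definition~\ref{def:ca-over-schemes}.
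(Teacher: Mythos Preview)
Your core strategy for steps 1--4 (reduce to a countable group, negate the existence of a uniform memory set for $\tau^{-1}$, build an inverse sequence of nonempty constructible ``witness'' sets in $X^{B_n}\times_K X^{B_n}$, and apply the uncountable-$K$ lemma on nested constructible sets to produce two distinct global configurations with the same $\tau$-image) is exactly what the paper does. The adaptation to non-affine $X$ that you flag as delicate is handled in the paper by a general lemma on inverse sequences of constructible subsets of $K$-algebraic varieties (Lemma~\ref{l:inverse-limit-seq-const}); no affine cover argument is needed at that stage.

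There is, however, a genuine gap in your plan, and it lies entirely in step~5. You are trying to prove more than the theorem asserts. ``Reversible'' here means only that $\tau^{-1}$ is a cellular automaton, i.e.\ that there exist a finite $N\subset G$ and a \emph{set-theoretic} map $\nu\colon A^N\to A$ serving as a local defining rule. Once your $\pi_n$ is injective you already have such a $\nu$ (namely $\nu=\pi_n^{-1}$ composed with the first projection), and the proof is finished. Upgrading $\nu$ to a $K$-scheme morphism is the content of Theorem~\ref{t:inverse-also}, which requires $G$ locally residually finite, $X$ separated and reduced, and $\mathrm{char}(K)=0$. Your proposed route via Zariski's Main Theorem cannot work in general: take $K$ uncountable algebraically closed of characteristic $p>0$, $X=\mathbb{A}^1_K$, $M=\{1_G\}$, and $f(x)=x^p$. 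Then $\tau$ is bijective on $A^G=K^G$ (since $K$ is perfect) and reversible with local rule $a\mapsto a^{1/p}$, but this rule is not induced by any $K$-scheme morphism $\mathbb{A}^1\to\mathbb{A}^1$. In your ZMT step the projection $\overline{Z_n}\to X^{B_n}$ is a bijection on $K$-points yet is purely inseparable rather than an isomorphism, so no scheme-theoretic inverse exists. Drop step~5 entirely; the argument is complete without it.
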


When a cellular automaton over schemes is reversible, it is natural to ask whether the  inverse cellular automaton is also a cellular automaton over the same schemes.
We shall obtain the following partial positive answer to this question.

\begin{theorem}
\label{t:inverse-also}
Let $G$ be a locally residually finite group and 
let $X$ be a separated and reduced algebraic variety
over an algebraically closed field $K$ of characteristic $0$.
Let $A \coloneqq X(K)$ denote the set of $K$-points of  $X$.
Suppose that $\tau \colon A^G \to A^G$ is a
reversible cellular automaton over the group $G$ and the $K$-scheme $X$ with coefficients in $K$.
Then its inverse cellular automaton
$\tau^{-1} \colon A^G \to A^G$ is also a cellular automaton
over the $K$-scheme $X$ with coefficients in $K$.  
\end{theorem}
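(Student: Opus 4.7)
The plan is to reduce to a finite-dimensional scheme-theoretic question via periodic configurations, then use characteristic $0$ to promote $K$-point bijectivity to a scheme-theoretic isomorphism.

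Since both $M$ (a memory set of $\tau$) and $M'$ (a memory set of $\tau^{-1}$) are finite, by local residual finiteness of $G$ we may replace $G$ by the finitely generated subgroup $H = \langle M \cup M' \rangle$: the restrictions to $A^H$ of $\tau$ and $\tau^{-1}$, using the same local rules $\mu$ and $\mu'$, remain mutually inverse cellular automata, and establishing that $\mu'$ is induced by a scheme morphism $X^{M'} \to X$ is the same problem regardless of whether we work over $H$ or $G$. So assume $G$ is residually finite, and choose a normal subgroup $H_0 \trianglelefteq G$ of finite index such that $G \to G/H_0$ is injective on $M \cdot M'$. The subspace $\Fix(H_0) \subseteq A^G$ of $H_0$-periodic configurations identifies canonically with $A^{G/H_0} = X^{G/H_0}(K)$; it is preserved by both $\tau$ and $\tau^{-1}$ by $G$-equivariance, and the restricted bijection $\tau_{H_0} \colon A^{G/H_0} \to A^{G/H_0}$ is induced by a morphism of $K$-schemes $\widetilde{\tau}_{H_0} \colon X^{G/H_0} \to X^{G/H_0}$.

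The key step is to show $\widetilde{\tau}_{H_0}$ is a scheme-theoretic automorphism. The variety $X^{G/H_0}$ is separated and reduced, and $\widetilde{\tau}_{H_0}$ is bijective on $K$-points. Over an algebraically closed field of characteristic $0$, a bijective endomorphism of a reduced separated finite-type variety is necessarily an automorphism: lifting to the normalization produces a finite bijective endomorphism of a normal variety which is an automorphism by Zariski's Main Theorem (combined with Ax--Grothendieck), and this automorphism descends because the original morphism is compatible with the conductor ideal.

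Once $\sigma_{H_0} \coloneqq \widetilde{\tau}_{H_0}^{-1}$ is a scheme morphism, project it onto the factor indexed by $1_G \in G/H_0$ to obtain a morphism $\sigma_{H_0}^{(1)} \colon X^{G/H_0} \to X$. On $K$-points, $\sigma_{H_0}^{(1)}(c) = \tau^{-1}(c)(1_G) = \mu'(c|_{M'})$, so $\sigma_{H_0}^{(1)}$ depends only on the $M'$-coordinates. By rigidity (morphisms from a reduced finite-type $K$-scheme to a separated $K$-scheme that agree on $K$-points coincide), $\sigma_{H_0}^{(1)}$ factors through the projection $X^{G/H_0} \to X^{M'}$ as $f' \circ \pi$ for a unique morphism $f' \colon X^{M'} \to X$, and this $f'$ induces $\mu'$ on $K$-points; hence $\tau^{-1}$ is a cellular automaton over the $K$-scheme $X$ with coefficients in $K$. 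The principal obstacle is the third-paragraph claim about bijective endomorphisms in characteristic $0$: the hypotheses of characteristic $0$ and reducedness are precisely what allows us to avoid the pathologies of purely inseparable maps and to descend from the normalization.
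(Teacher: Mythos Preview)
Your approach is essentially the paper's: reduce to residually finite $G$, choose a finite-index (normal) subgroup so that the memory set of $\tau^{-1}$ injects into the quotient, realize the action on periodic configurations as a $K$-scheme endomorphism $\widetilde{\tau}_{H_0}$ of $X^{G/H_0}$, upgrade bijectivity on $K$-points to a scheme automorphism via the characteristic-$0$ theorem of Nowak (which the paper cites rather than sketches), and read off the local rule of $\tau^{-1}$ from the inverse automorphism. Your identification of the third paragraph as the crux is exactly right; the paper also uses that $X^{G/H_0}$ is separated and reduced (the latter because $K$ is perfect) to apply Nowak.

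There is one small gap in your extraction step. Rigidity (``morphisms agreeing on $K$-points coincide'') gives \emph{uniqueness} of $f'$, not \emph{existence}; knowing that $\sigma_{H_0}^{(1)}$ depends only on the $M'$-coordinates at the level of $K$-points does not by itself produce a morphism $f'\colon X^{M'}\to X$. Two easy fixes: (i) assume $A\neq\varnothing$ (else the statement is vacuous), pick $a\in X(K)$, define a section $s\colon X^{M'}\to X^{G/H_0}$ by filling the remaining coordinates with $a$, set $f'\coloneqq\sigma_{H_0}^{(1)}\circ s$, and then use rigidity to check $f'\circ\pi=\sigma_{H_0}^{(1)}$; or (ii) do what the paper does and take as memory set a full set $N\supset M'$ of coset representatives for $H_0$ in $G$, so that $X^{G/H_0}\cong X^{N}$ and the projection to the $1_G$-factor already gives a morphism $h\colon X^{N}\to X$ without any factoring.
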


Concerning the invertiblity of affine algebraic cellular automata, 
it is asked in \cite[Question~(Q1)]{cc-algebraic-ca}
whether  
 there exists a bijective algebraic cellular automaton $\tau \colon A^\Z \to A^\Z$ over $\C$  whose inverse cellular automaton $\tau^{-1} \colon A^\Z \to A^\Z$ is not algebraic.
Combining Theorem~\ref{t:surj-ca}, Theorem~\ref{t:main-reversible}, and Theorem~\ref{t:inverse-also}, we obtain the following corollary which gives in particular a negative answer to the above question. 

\begin{corollary}
\label{c:answer-question}
Let $G$ be a locally residually finite group and let $X$ be a separated and reduced complex algebraic variety. 
Let $A \coloneqq X(\C)$. 
Suppose that $\tau \colon A^G \to A^G$ is an injective cellular automaton over the group $G$ and the complex scheme $X$ with complex coefficients. 
Then $\tau$ is reversible and its inverse is also a cellular automaton
over the complex scheme $X$ with complex coefficients.
\end{corollary}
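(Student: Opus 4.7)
The statement is a clean three-step consequence of the three main theorems of the paper, so my plan is simply to verify that the hypotheses of each theorem hold in the current setting and chain them together.

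First, I would apply Theorem~\ref{t:surj-ca} to upgrade injectivity to bijectivity. The field $K = \C$ is algebraically closed and uncountable, $G$ is locally residually finite by assumption, and $X$ is an algebraic variety over $\C$ (the separatedness and reducedness hypotheses in the corollary are stronger than what Theorem~\ref{t:surj-ca} requires). Since $\tau$ is an injective cellular automaton over $G$ and the $\C$-scheme $X$ with coefficients in $\C$, Theorem~\ref{t:surj-ca} implies $\tau$ is surjective, hence bijective.

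Next, I would apply Theorem~\ref{t:main-reversible} to obtain reversibility. The hypotheses there require only that $G$ be a group, $X$ an algebraic variety over an uncountable algebraically closed field, and $\tau$ bijective. All of these are now satisfied, so $\tau^{-1} \colon A^G \to A^G$ exists and is a cellular automaton over $G$ and $A$ in the abstract sense (i.e., it has some finite memory set and a local defining map, not yet known to come from a scheme morphism).

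Finally, I would invoke Theorem~\ref{t:inverse-also} to promote $\tau^{-1}$ to a cellular automaton over the $\C$-scheme $X$ with coefficients in $\C$. Here $\C$ is an algebraically closed field of characteristic $0$, $X$ is separated and reduced by assumption, $G$ is locally residually finite, and $\tau$ is reversible from the previous step; thus all hypotheses of Theorem~\ref{t:inverse-also} are met, yielding the desired conclusion.

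There is no real obstacle: the only thing to notice is that the hypotheses of the corollary were deliberately chosen as the common refinement of the hypotheses of the three theorems (locally residually finite $G$, uncountable algebraically closed field of characteristic $0$, separated and reduced $X$). The proof is therefore essentially a one-line citation of the three theorems in order, and I would write it as such.
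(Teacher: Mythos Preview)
Your proposal is correct and matches the paper's approach exactly: the paper states that Corollary~\ref{c:answer-question} is obtained by combining Theorem~\ref{t:surj-ca}, Theorem~\ref{t:main-reversible}, and Theorem~\ref{t:inverse-also}, and does not provide a separate proof beyond that sentence. Your verification that the hypotheses chain properly is precisely what is needed.
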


The paper is organized as follows.
In Section~\ref{sec:ca} we fix notation and recall some facts about cellular automata.
General properties of cellular automata over schemes are investigated in Section~\ref{sec:ca-schemes}. It is shown in particular that the set of cellular automata over a group $G$ and schemes $S$, $X$,
and $Y$, form a monoid for the composition of maps. In Section~\ref{sec:cip-schemes},
we establish a fundamental property of the cellular automata 
over algebraic varieties that are studied here, namely that their image is closed for the prodiscrete topology on the set of configurations
(Theorem~\ref{t:closed-image}). 
Sections 7, 8,  and 9 contain the proofs of the three theorems stated above.
Counter-examples showing the necessity of certain of our hypotheses are discussed in the final section.
In Appendix \ref{sec:schemes} we collect, for the possible convenience of the reader, the necessary background material on schemes and algebraic varieties.
In Appendix \ref{sec:inverse}, we review inverse limits and establish an auxiliary result on the non-emptiness of inverse limits of constructible subsets of algebraic varieties.
\par
We are very indebted to Misha Gromov for all the great ideas contained in~\cite{gromov-esav}
which were a permanent source of inspiration and motivation during the preparation of the present paper.

\section{Cellular automata}
\label{sec:ca}

\subsection{The prodiscrete topology}
Let $A$ be a set and let $G$ be a group.
The \emph{prodiscrete topology} on  $A^G = \prod_{g \in G} A  $  is the product topology obtained by taking the discrete topology on each factor $A$ of $A^G$.
Observe that the prodiscrete topology on $A^G$ is Hausdorff since the product of a family of Hausdorff topological spaces is itself Hausdorff.
If $c \in A^G$, the sets
\[
V(c,\Omega) \coloneqq \{d \in A^G : c\vert_\Omega = d\vert_\Omega\},
\]
where $\Omega$ runs over all finite subsets of $G$,
form a base of open neighborhoods of $c$  for the prodiscrete topology on $A^G$.
\par
Suppose that  $\tau \colon A^G \to A^G$ is a cellular automaton with memory set $M$.
Formula~\eqref{e;local-property} implies that
if two configurations $c,d \in A^G$ coincide on $g M$ for some $g \in G$, then the  configurations 
$\tau(c)$ and $\tau(d)$ take the same value at $g$.
Therefore,
if $d \in V(c,\Omega M)$ for some finite subset $\Omega \subset G$,
then $\tau(d) \in V(\tau(c),\Omega)$.
This shows that
every cellular automaton $\tau \colon A^G \to A^G$ is continuous with respect to the prodiscrete topology on $A^G$.

\subsection{The closed image property for cellular automata}
\label{subsec:cip-ca}
Let $A$ be a set and let $G$ be a group. 
Let $\tau \colon A^G \to A^G$ 
be a cellular automaton over the group $G$ and the alphabet $A$, with memory set $M$ and associated local defining map 
$\mu_M \colon A^M \to A$.
\par
Consider  the directed poset $\PP$ consisting of all finite subsets of $G$, partially ordered by inclusion.
\par
Let  $d\in A^G$ be a configuration that is in the closure of the image of $\tau$ with respect to the prodiscrete topology on $A^G$.
\par
For any  $\Omega \in \PP$, we write $\Omega'\coloneqq \{g\in G \colon gM\subset \Omega\}$, 
so that 
\[
\Omega'= \bigcap_{h \in M} \Omega h^{-1} \in \PP.
\]
Note that if $\Omega,\Lambda \in \PP$  and $\Omega \subset \Lambda$, then 
$\Omega' \subset \Lambda'$.
\par
For each $\Omega \in \PP$, the cellular automaton induces a map
$\tau_\Omega \colon A^{\Omega} \to A^{\Omega'}$ defined by
\[
\tau_\Omega(u)(g) \coloneqq  (\tau(e))\vert_{\Omega'}
\]
for all $u \in A^\Omega$, where $e \in A^G$ is any configuration extending  $u$.
Observe that the map $\tau_\Omega$ is well defined since, for every $g \in \Omega'$, the value of $\tau(e)$ at $g \in \Omega'$ is entirely determined by the restriction of $e$ to
$g M \subset \Omega$.
\par 
Consider now the inverse system
$(Z_\Omega,\varphi_{\Omega,\Lambda})$ indexed by $\PP$, where
$Z_\Omega = Z_\Omega(d) \subset A^\Omega$ is defined by
\begin{equation}
\label{l:def-Z-Omega}
Z_\Omega \coloneqq \tau_\Omega^{-1}(d\vert_{\Omega'})
\end{equation}
and the transition maps $\varphi_{\Omega,\Lambda} \colon Z_\Lambda \to Z_\Omega$ 
are induced by the restriction maps
$\rho_{\Omega,\Lambda} \colon A^{\Lambda} \to A^{\Omega}$ for  all $\Omega,\Lambda \in \PP$ such that $\Omega \subset \Lambda$.
\par
Observe that
\begin{equation}
\label{e:Z-omega-not-empty}
Z_\Omega \not= \varnothing \text{  for all  } \Omega \in \PP.
\end{equation}
Indeed, by our hypothesis, $d$ is in the closure of the image of $\tau$ for the prodiscrete topology on $A^G$,  so that, for every $\Omega \in \PP$, there exists $c \in A^G$ such that 
$(\tau(c))\vert_{\Omega'} = d\vert_{\Omega'}$ and hence
$c\vert_{\Omega} \in Z_\Omega$.

\begin{lemma}
\label{l:charact-cip}
The following conditions are equivalent:
\begin{enumerate}[label=(\alph*)]
\item
$\tau$ has the closed image property with respect to the prodiscrete topology on $A^G$;
\item
for every configuration $d \in A^G$ that is in the closure of $\tau(A^G)$ with respect to the prodiscrete topology,
one has $\varprojlim Z_\Omega \not= \varnothing$. 
\end{enumerate}
\end{lemma}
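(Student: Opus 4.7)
The plan is to reduce the equivalence to the single identity
\[
d \in \tau(A^G) \iff \varprojlim Z_\Omega(d) \neq \varnothing,
\]
valid for any configuration $d \in A^G$ lying in the closure of $\tau(A^G)$. Once this is established, (a) $\Leftrightarrow$ (b) is immediate: the closed image property asserts exactly that every $d$ in the closure of $\tau(A^G)$ already lies in $\tau(A^G)$.

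For the implication $d \in \tau(A^G) \Rightarrow \varprojlim Z_\Omega \neq \varnothing$, I would pick any $c \in A^G$ with $\tau(c) = d$ and consider the family $(c\vert_\Omega)_{\Omega \in \PP}$. For each $\Omega \in \PP$ and each $g \in \Omega'$ one has $gM \subset \Omega$, so by \eqref{e;local-property} the value $\tau(c)(g) = \mu_M((g^{-1}c)\vert_M)$ depends only on $c\vert_\Omega$, which gives $\tau_\Omega(c\vert_\Omega) = \tau(c)\vert_{\Omega'} = d\vert_{\Omega'}$; hence $c\vert_\Omega \in Z_\Omega$. Compatibility under the transition maps $\varphi_{\Omega,\Lambda}$ is clear since $\rho_{\Omega,\Lambda}(c\vert_\Lambda) = c\vert_\Omega$, so $(c\vert_\Omega)_{\Omega \in \PP} \in \varprojlim Z_\Omega$.

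For the converse $\varprojlim Z_\Omega \neq \varnothing \Rightarrow d \in \tau(A^G)$, I would pick any element $(u_\Omega)_{\Omega \in \PP} \in \varprojlim Z_\Omega$ and glue it into a configuration. Compatibility under restrictions means that $u_\Lambda\vert_\Omega = u_\Omega$ whenever $\Omega \subset \Lambda$, so the values $u_{\{g\}}(g)$ define a configuration $c \in A^G$ satisfying $c\vert_\Omega = u_\Omega$ for every $\Omega \in \PP$. Then for any $g \in G$, set $\Omega \coloneqq gM \in \PP$, so that $g \in \Omega'$; by definition of $\tau_\Omega$ and by $u_\Omega \in Z_\Omega$ we get
\[
\tau(c)(g) = \tau_\Omega(c\vert_\Omega)(g) = \tau_\Omega(u_\Omega)(g) = d\vert_{\Omega'}(g) = d(g).
\]
This shows $\tau(c) = d$, i.e.\ $d \in \tau(A^G)$.

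There is essentially no serious obstacle here; the statement is a direct unpacking of definitions together with the universal property of an inverse system indexed by the poset of finite subsets of $G$. The only point requiring a bit of care is the well-definedness of $\tau_\Omega$ and the verification that for every $g \in G$ the singleton memory-neighborhood $gM$ belongs to $\PP$, which uses only that $M$ is finite. The non-emptiness \eqref{e:Z-omega-not-empty} of each $Z_\Omega$ is used implicitly to justify that the question of non-emptiness of $\varprojlim Z_\Omega$ is the right condition (individual levels already being non-empty by the assumption that $d$ is in the closure of $\tau(A^G)$).
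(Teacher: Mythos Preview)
Your proof is correct and follows essentially the same approach as the paper: both arguments reduce (a)$\Leftrightarrow$(b) to the identity $d \in \tau(A^G) \iff \varprojlim Z_\Omega(d) \neq \varnothing$, proved by exhibiting $(c\vert_\Omega)_\Omega$ as an element of the inverse limit in one direction and gluing an element of the inverse limit into a preimage $c$ in the other. Your version simply spells out a few more details (the explicit verification that $\tau_\Omega(c\vert_\Omega) = d\vert_{\Omega'}$ and the use of $\Omega = gM$ to check $\tau(c)(g) = d(g)$) than the paper does.
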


\begin{proof}
Let $Z \coloneqq \varprojlim Z_\Omega$.
\par
If $d = \tau(c)$ for some $c \in A^G$, then
we clearly have that  $(c\vert_{\Omega})_{\Omega \in \PP} \in Z$.
This shows that (a) implies (b).
\par
Conversely, suppose that 
\[
 (z_\Omega)_{\Omega \in \PP}  \in \varprojlim Z_\Omega ,
\] 
then there exists a unique configuration
$c \in A^G$ such that $c\vert_\Omega = z_\Omega$ for all  $\Omega \in \PP$.
On the other hand, we have that $\tau(c) = d$ by~\eqref{l:def-Z-Omega}.
This shows that (b) implies (a). 
\end{proof}

\subsection{Restriction of a cellular automaton to a subgroup}
Let $A$ be a set and let $G$ be a group. 
Let $\tau \colon A^G \to A^G$ 
be a cellular automaton over the group $G$ and the alphabet $A$, with memory set $M$ and associated local defining map 
$\mu_M \colon A^M \to A$.
Suppose that $H$ is a subgroup of $G$ containing $M$.
Then the map $\sigma \colon A^H \to A^H$ defined by
\[
\sigma(c)(h) = \mu((h^{-1}c)\vert_M)
\quad \text{  for all  } c \in A^H \text{ and } h \in H,
\]
is a cellular automaton over the group $H$ and the alphabet $A$, with memory set $M$ and associated local defining map $\mu_M$.
One says that $\sigma$ is the cellular automaton obtained by \emph{restriction} of  
$\tau$ to $H$.
\par

\begin{lemma}
\label{l:induction}
Let $G$ be a group, $A$ a set, and $H$ a subgroup of $G$. 
Suppose that  $\tau \colon A^G \to A^G$ is a cellular automaton over $G$ admitting
a memory set  $M \subset H$
and let $\sigma \colon A^H \to A^H$ denote the cellular automaton over $H$ obtained by restriction. 
Then the following holds:
\begin{enumerate}[label=(\roman*)]
 \item 
$\tau$ is injective if and only if $\sigma$ is injective;
 \item 
$\tau$ is surjective if and only if $\sigma$ is surjective;
 \item 
$\tau$ is bijective if and only if $\sigma$ is bijective;
\item 
$\tau$ is reversible if and only if $\sigma$ is reversible;
 \item 
$\tau(A^G)$ is closed in $A^G$ for the prodiscrete topology  if and only if $\sigma(A^H)$ is closed in $A^H$ 
for the prodiscrete topology.
\end{enumerate} 
\end{lemma}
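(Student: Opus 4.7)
The plan is to exploit the observation that, since $M \subset H$, the cellular automaton $\tau$ does not mix the left cosets of $H$ in $G$: up to a canonical relabeling, $\tau$ is simply a Cartesian power of $\sigma$ indexed by $G/H$. Once this is made precise, each of (i)--(v) will reduce to a routine verification.

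First I would choose a left transversal $T \subset G$ for $G/H$, so that every $g \in G$ factors uniquely as $g = th$ with $t \in T$ and $h \in H$. This yields a bijection $\Phi \colon A^G \to \prod_{t \in T} A^H$ defined by $\Phi(c)_t(h) \coloneqq c(th)$. A direct computation from \eqref{e;local-property}, using that $M \subset H$, gives $\Phi(\tau(c))_t(h) = \mu_M\bigl((h^{-1}\Phi(c)_t)\vert_M\bigr) = \sigma(\Phi(c)_t)(h)$ for all $t \in T$ and $h \in H$. Equivalently, $\Phi \circ \tau = \widehat{\sigma} \circ \Phi$, where $\widehat{\sigma} \colon \prod_T A^H \to \prod_T A^H$ is the product map $(c_t)_t \mapsto (\sigma(c_t))_t$.

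From this identity, (i)--(iii) follow immediately, since a nonempty Cartesian power of $\sigma$ is injective, surjective, or bijective iff $\sigma$ itself is. For (v), I would observe that $\Phi$ is a homeomorphism when $\prod_T A^H$ carries the product of the prodiscrete topologies on each factor (this topology coincides with the prodiscrete topology on $A^G$ under the index identification $g \leftrightarrow (t,h)$). The image of $\widehat{\sigma}$ is $\sigma(A^H)^T$, and the elementary fact that a subset $X \subset Y$ is closed iff $X^T$ is closed in $Y^T$ concludes: one direction writes $X^T$ as an intersection of projection-preimages; the other pulls back along a slice embedding $y \mapsto (y, x_0, x_0, \ldots)$ through any fixed $x_0 \in X$, the case $X = \varnothing$ being trivial.

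The hard part will be (iv). The easy direction is immediate: if $\sigma^{-1}$ is a cellular automaton with memory set $N \subset H$ and local map $\nu_N$, then the cellular automaton over $G$ defined by the same pair $(N,\nu_N)$ corresponds via $\Phi$ to $(\sigma^{-1})^T = \widehat{\sigma}^{-1}$, hence equals $\tau^{-1}$. For the converse, assume $\tau$ is reversible with $\tau^{-1}$ admitting memory set $N \subset G$ and local map $\nu_N$; the obstacle is that $N$ need not lie in $H$. I would resolve this by invoking the product decomposition once more: $\Phi \circ \tau^{-1} \circ \Phi^{-1} = (\sigma^{-1})^T$ preserves the coset partition, so for any $c \in A^G$ the value $\tau^{-1}(c)(e) = \nu_N(c\vert_N)$ depends only on $c\vert_H$. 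Since altering $c$ outside $H$ modifies $c\vert_N$ but leaves $c\vert_{N \cap H}$ unchanged, the map $\nu_N$ must factor through the restriction $A^N \to A^{N \cap H}$, yielding a well-defined local map $\tilde{\nu} \colon A^{N \cap H} \to A$. The pair $(N \cap H, \tilde{\nu})$ then exhibits $\sigma^{-1}$ as a cellular automaton over $H$, completing the proof.
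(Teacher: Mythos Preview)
Your argument is correct. The coset decomposition $A^G \cong (A^H)^T$ via a left transversal $T$ (taken to contain $1_G$) and the conjugation identity $\Phi \circ \tau = \widehat{\sigma} \circ \Phi$ are exactly the right tools, and your treatment of (iv) --- showing that the local map $\nu_N$ of $\tau^{-1}$ factors through $A^{N\cap H}$ because $\tau^{-1}(c)(1_G)$ is determined by $c\vert_H$ --- is sound. One small point worth making explicit: the factorisation argument for $\nu_N$ implicitly assumes $A \neq \varnothing$ (so that partial patterns extend to full configurations), but the case $A = \varnothing$ is trivial.

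As for comparison with the paper: the paper does not give a proof at all --- it simply cites \cite[Theorem~1.2]{induction}. Your write-up is therefore a self-contained substitute for that citation. The approach you take (product decomposition over cosets) is the standard one and is essentially what the cited reference does, so there is no substantive methodological difference to discuss.
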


\begin{proof}
See \cite[Theorem~1.2]{induction}.
\end{proof}

\subsection{Periodic configurations}
Let $A$ be a set and let $G$ be a group.
\par
Let $H$ be a subgroup of $G$.
One says that a configuration $c \in A^G$ is \emph{fixed} by $H$ if $h c = c$ for all $h \in H$.
Denote by  $\Fix(H)$ the subset of $A^G$ consisting of all configurations  that are fixed by $H$. 
Consider the set $H \backslash G \coloneqq \{ Hg : g \in G \}$ consisting of all right cosets of $H$ in $G$ and the canonical surjective map
$\pi_H \colon G \to H \backslash G$
defined by $\pi_H(g) = Hg$ for all $g \in G$.
Given an element $d \in A^{H \backslash G}$, i.e., a map $d \colon H \backslash G \to A$,  the composite map
$d \circ \pi_H \colon G \to A$ is in $\Fix(H)$.
Moreover, the map $\rho_H \colon  A^{H \backslash G} \to \Fix(H)$, defined by 
\begin{equation}
\label{e:def-rho-H}
\rho_H(d) \coloneqq d \circ \pi_H \text{ for all }d \in A^{H \backslash G},
\end{equation}  
is bijective (see e.g.~\cite[Proposition~1.1.3]{ca-and-groups-springer}). 
\par
A configuration $c \in A^G$ is called \emph{periodic} if its orbit under the $G$-shift is finite.
This amounts to saying that there exists a finite-index subgroup  $H \subset G$ such that 
$c \in \Fix(H)$.

\begin{lemma}
\label{l:periodic-conf-are-dense}
Let $A$ be a set and let $G$ be a residually finite group.
Then the periodic configurations are dense in $A^G$ for the prodiscrete topology.
\end{lemma}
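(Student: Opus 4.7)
The plan is to show directly that every basic prodiscrete open neighborhood of an arbitrary configuration meets the set of periodic configurations. Fix $c \in A^G$ and a finite subset $\Omega \subset G$; it suffices to produce a periodic configuration $c' \in A^G$ with $c'\vert_\Omega = c\vert_\Omega$, i.e., $c' \in V(c,\Omega)$.

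First, I would use residual finiteness to find a suitable finite-index subgroup. Consider the finite set
\[
F \coloneqq \{\omega_1 \omega_2^{-1} : \omega_1, \omega_2 \in \Omega, \ \omega_1 \neq \omega_2\} \subset G \setminus \{1_G\}.
\]
For each $g \in F$ there is a finite-index subgroup of $G$ not containing $g$ by residual finiteness, and the intersection of finitely many finite-index subgroups is again of finite index. Thus there exists a finite-index subgroup $H \subset G$ with $H \cap F = \varnothing$. By the definition of $F$, this means that the canonical map $\pi_H \colon G \to H \backslash G$, $g \mapsto Hg$, is injective on $\Omega$.

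Next, I would use the bijection $\rho_H \colon A^{H \backslash G} \to \Fix(H)$ recalled in~\eqref{e:def-rho-H} to build $c'$. Since $\pi_H\vert_\Omega$ is injective, one can choose a map $d \colon H \backslash G \to A$ such that $d(H\omega) = c(\omega)$ for every $\omega \in \Omega$ (assign values arbitrarily on the remaining cosets). Set $c' \coloneqq \rho_H(d) = d \circ \pi_H$. By construction $c' \in \Fix(H)$, so the $G$-orbit of $c'$ has cardinality at most $[G:H] < \infty$ and $c'$ is periodic. Moreover, for each $\omega \in \Omega$,
\[
c'(\omega) = d(\pi_H(\omega)) = d(H\omega) = c(\omega),
\]
so $c' \in V(c,\Omega)$, as required.

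There is essentially no hard step here: the argument is a direct combination of residual finiteness (to separate elements of $\Omega$ in the coset space) with the bijection $\rho_H$ between $A^{H\backslash G}$ and $\Fix(H)$. The only point requiring slight care is the side convention for cosets, namely that to use the $H$-fixed configurations arising from $\pi_H(g)=Hg$ one must separate elements of $\Omega$ in \emph{right} cosets, which is why $F$ is defined using products of the form $\omega_1 \omega_2^{-1}$ rather than $\omega_1^{-1}\omega_2$.
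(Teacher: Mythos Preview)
Your proof is correct and is essentially the standard argument: use residual finiteness to separate the finitely many elements of $\Omega$ in $H\backslash G$ for a suitable finite-index subgroup $H$, then lift the pattern $c\vert_\Omega$ to an $H$-fixed configuration via $\rho_H$. The paper itself does not give a proof but only cites \cite[Theorem~2.7.1]{ca-and-groups-springer}; the argument there follows the same lines as yours, so there is nothing to compare.
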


\begin{proof}
See e.g.~\cite[Theorem~2.7.1]{ca-and-groups-springer}.
\end{proof}

\section{Cellular automata over schemes}
\label{sec:ca-schemes}

\subsection{Change of memory set}
The following result shows that Definition~\ref{def:ca-over-schemes} does not depend of the choice of the memory set $M$ if $X(S) \not= \varnothing$. 

\begin{proposition}
\label{p:independent-of-memory}
Let $G$ be a group.  
Let $S$ be a scheme and let $X, Y$ be schemes over  $S$
with $X(S) \not= \varnothing$.
Let $A \coloneqq X(Y)$ denote the set of $Y$-points of $X$
and  suppose that $\tau \colon A^G \to A^G$ is a cellular automaton.
Then the following conditions are equivalent:
\begin{enumerate}[label=(\alph*)]
\item
there exists a memory set $M$ of $\tau$ such that 
the associated local defining map  $\mu_M \colon A^M \to M$ 
satisfies $\mu_M = f^{(Y)}$  for some $S$-scheme morphism $f \colon X^M \to X$; 
\item
for any memory set $M$ of $\tau$,
the associated local defining map  $\mu_M \colon A^M \to A$ 
satisfies $\mu_M = f^{(Y)}$  for some $S$-scheme morphism $f \colon X^M \to X$. 
\end{enumerate}
\end{proposition}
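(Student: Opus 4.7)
The implication (b)$\Rightarrow$(a) is immediate, since by definition every cellular automaton admits at least one memory set. All the content lies in (a)$\Rightarrow$(b), so I fix a memory set $M_0$ with $\mu_{M_0} = f_0^{(Y)}$ for some $S$-morphism $f_0 \colon X^{M_0} \to X$ as in (a), let $M$ be an arbitrary memory set of $\tau$, and aim to produce an $S$-morphism $f_M \colon X^M \to X$ with $f_M^{(Y)} = \mu_M$.

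The first step is to pass to a common enlargement $N \coloneqq M_0 \cup M$, which is again a memory set of $\tau$. A standard preliminary observation (which I would record as a short lemma, independent of the scheme structure) states that whenever $M \subset N$ are both memory sets for the same cellular automaton, the local defining maps are related by $\mu_N = \mu_M \circ \pi_{M,N}$, where $\pi_{M,N} \colon A^N \to A^M$ is the restriction of configurations. Applied to $M_0 \subset N$, and noting that $\pi_{M_0,N}$ is the map on $Y$-points induced by the canonical projection $S$-morphism $p \colon X^N \to X^{M_0}$, this yields $\mu_N = (f_0 \circ p)^{(Y)}$, so $\mu_N$ is already induced by the $S$-morphism $g \coloneqq f_0 \circ p \colon X^N \to X$.

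The heart of the argument is now to descend $g$ from $X^N$ to $X^M$ along the projection $q \colon X^N \to X^M$. At the level of $Y$-points, the relation $\mu_N = \mu_M \circ \pi_{M,N}$ gives such a factorization, but at the level of schemes $g$ need not factor through $q$. This is where the hypothesis $X(S) \neq \varnothing$ enters: picking $x_0 \in X(S)$ and identifying $X^N$ with $X^M \times_S X^{N \setminus M}$, I define an $S$-section $s \colon X^M \to X^N$ of $q$ as the product of $\mathrm{id}_{X^M}$ with the constant $S$-morphism $X^M \to X^{N \setminus M}$ whose components are all equal to $x_0$. Setting $f_M \coloneqq g \circ s$, it only remains to verify $f_M^{(Y)} = \mu_M$: for $\varphi \in A^M$, the configuration $s^{(Y)}(\varphi) \in A^N$ extends $\varphi$ by the constant value $\bar{x}_0 \in A$ (the image of $x_0$ under $X(S) \to X(Y)$) on $N \setminus M$, whence the restriction identity for $\mu_N$ gives $f_M^{(Y)}(\varphi) = \mu_N(s^{(Y)}(\varphi)) = \mu_M(\varphi)$.

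The main obstacle is precisely this descent step: the set-theoretic factorization of $\mu_N$ through restriction does not a priori produce a scheme-theoretic factorization of $g$, and the hypothesis $X(S) \neq \varnothing$ is exactly the input needed to manufacture the section $s$ that substitutes for such a factorization. Without this hypothesis one expects the statement to fail.
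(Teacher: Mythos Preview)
Your proof is correct and uses essentially the same idea as the paper: the relation $\mu_N=\mu_M\circ\pi_{M,N}$ for nested memory sets, together with a section built from a chosen $S$-point $x_0\in X(S)$ to pass between memory sets of different sizes. The only cosmetic difference is that the paper routes through the \emph{minimal} memory set $M_0\subset M\cap N$ (building the section $X^{M_0}\to X^{M}$ and then composing with the projection $X^{N}\to X^{M_0}$), whereas you route through the \emph{union} $N=M_0\cup M$ (first projecting $X^{N}\to X^{M_0}$ and then building the section $X^{M}\to X^{N}$); both arrangements implement the same trick.
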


\begin{proof}
It suffices to prove that (a) implies (b) since 
the converse implication is trivial.
Let $M$ be a memory set of $\tau$ and
suppose that the local defining map $\mu_M \colon A^M \to A$ 
satisfies $\mu_M = f^{(Y)}$  for some $S$-scheme morphism $f \colon X^M \to X$. 
Let $N$ be another memory set of $\tau$ and let us show that the associated local defining map 
$\mu_{N} \colon A^{N} \to A$ 
satisfies $\mu_N = g^{(Y)}$  for some $S$-scheme morphism $g \colon X^M \to X$. 
 Let $M_0$ denote the minimal memory set of $\tau$
 (see e.g.~\cite[Section~1.5]{ca-and-groups-springer}).
 Recall that  $M_0 \subset M \cap N$.
Consider the projection  map 
 $\rho \colon A^{N} \to A^{M_0}$.
Observe that
\begin{equation}
\label{e:rho-ind-r} 
\rho =  r^{(Y)},
\end{equation} 
where  $r \colon X^{N} \to X^{M_0}$ is the projection $S$-scheme morphism.
Moreover, we  have that
\begin{equation}
\label{e;projectioon-minimal-m-s}
 \mu_{N} = \mu_{M_0} \circ \rho.  
\end{equation}
Since $X(S)  \not= \varnothing$ by our hypothesis,
there exists  an $S$-scheme morphism $e \colon S \to X$.
Consider the $S$-scheme morphism 
\[
h \colon X^{M_0} = X^{M_0} \times S^{M \setminus M_0} \to X^M
\]
that is the $S$-fibered product of the identity morphism $X^{M_0} \to X^{M_0}$
with a family, indexed by $M \setminus M_0$, of copies of the morphism $e \colon S \to X$.
Clearly $\mu_{M_0} = (f \circ h)^{(Y)}$. 
By using~\eqref{e;projectioon-minimal-m-s} and~\eqref{e:rho-ind-r}, we get  
\[
\mu_{N} = (f \circ h)^{(Y)} \circ r^{(Y)} = (  f \circ h \circ r)^{(Y)},
\]
so that we can take $g = f \circ h \circ r$.
This shows that (a) implies (b). 
\end{proof}

\subsection{The monoid $\CA(G,S,X,Y)$}

\begin{lemma}
\label{l:monoid}
Let $G$ be a group and let $M, M'\subset G$ be finite subsets.  
Let $S$ be a scheme and let $X, Y$ be schemes over $S$.
Let $A \coloneqq X(Y)$ denote the set of $Y$-points of $X$. 
Suppose that $\tau, \tau' \colon A^G \to A^G$ are  cellular automata
over the group $G$ and the alphabet $A$
with respective memory sets $M,M'$  
whose local defining maps $\mu , \mu' $ are induced respectively by $S$-scheme morphisms $f \colon X^M \to X$ and $f' \colon X^{M'} \to X$. 
Then the composite map  $\tau' \circ \tau \colon A^G \to A^G$ is a cellular automaton with memory set $M' M$ whose local defining map is induced by a morphism of $S$-schemes $X^{M' M} \to X$.
\end{lemma}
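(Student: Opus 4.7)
The plan is to first identify explicitly the local defining map of the composite cellular automaton, and then build the required $S$-scheme morphism by assembling projections through the universal property of the $S$-fibered product.

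First, I would recall the standard fact (see e.g.~\cite[Section~1.4]{ca-and-groups-springer}) that the composite $\tau' \circ \tau$ is a cellular automaton admitting $M'M$ as a memory set. Concretely, unwinding~\eqref{e;local-property} twice shows that for any $c \in A^G$ and $g \in G$, writing $v \coloneqq (g^{-1}c)\vert_{M'M} \in A^{M'M}$, one has
\[
(\tau' \circ \tau)(c)(g) = \mu'\bigl( (\mu(v_{m'}))_{m' \in M'} \bigr),
\]
where, for each $m' \in M'$, the element $v_{m'} \in A^M$ is defined by $v_{m'}(m) \coloneqq v(m'm)$ for $m \in M$. Hence the local defining map $\mu'' \colon A^{M'M} \to A$ of $\tau' \circ \tau$ associated with $M'M$ is given by $\mu''(v) = \mu'\bigl((\mu(v_{m'}))_{m' \in M'}\bigr)$.

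Next, I would construct the corresponding $S$-scheme morphism as follows. For each $m' \in M'$, the injective map $M \to M'M$, $m \mapsto m'm$, induces by the universal property of the $S$-fibered product a projection $S$-morphism $p_{m'} \colon X^{M'M} \to X^M$ whose effect on $Y$-points is exactly $v \mapsto v_{m'}$. The family $(f \circ p_{m'})_{m' \in M'}$ of $S$-morphisms $X^{M'M} \to X$ then assembles, again by the universal property of $X^{M'}$ as an $S$-fibered product, into a unique $S$-morphism $F \colon X^{M'M} \to X^{M'}$. I would then take $g \coloneqq f' \circ F \colon X^{M'M} \to X$ as the desired morphism.

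Finally, I would verify that $g^{(Y)} = \mu''$. By functoriality of the $Y$-points construction, the characterizing property of $F$ gives $F^{(Y)}(v) = (f^{(Y)}(v_{m'}))_{m' \in M'} = (\mu(v_{m'}))_{m' \in M'}$, so
\[
g^{(Y)}(v) = (f')^{(Y)}\bigl(F^{(Y)}(v)\bigr) = \mu'\bigl((\mu(v_{m'}))_{m' \in M'}\bigr) = \mu''(v),
\]
as required. The only point demanding care is organizational rather than technical: one must correctly handle the identifications $X^{m'M} \simeq X^M$ coming from the bijections $m \mapsto m'm$ when invoking the universal property of the $S$-fibered products, but no substantive obstacle arises.
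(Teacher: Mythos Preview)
Your proposal is correct and follows essentially the same approach as the paper's proof: both identify the local defining map of $\tau'\circ\tau$ via the standard formula, then build the required $S$-morphism by using, for each $m'\in M'$, the map $M\to M'M$, $m\mapsto m'm$, to produce projections $X^{M'M}\to X^M$, assembling the composites with $f$ into a morphism $X^{M'M}\to X^{M'}$, and finally composing with $f'$. The only cosmetic differences are notational (your $F$ is the paper's $h$, your $p_{m'}$ is the paper's $p^*_{g'}$ from Remark~\ref{rem:X-E-X-F}).
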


\begin{proof}
By \cite[Proposition~1.4.9]{ca-and-groups-springer}, the composite map  
$\tau' \circ \tau$ is a cellular automaton with memory set $M' M$ whose  associated local defining map 
$\eta $  can be obtained as a composition $A^{M'M} \to A^{M} \to A$ 
in the following way.  
For each $y \in A^{M'M}$ and $g' \in M'$, define  $y_{g'} \in A^M$ 
 by $y_{g'}(g)=y(g'g)$ for all  $g\in M$. 
Let $\widetilde{y} \in A^{M'}$ be defined by 
$\widetilde{y}(g')=\mu(y_{g'})$ for all $g' \in M'$. 
This defines a map $\xi \colon A^{M'M} \to A^{M'}$, where
 $\xi(y) \coloneqq  \widetilde{y}$ for all $yy \in A^{M' M}$. 
Then, we have that  $\eta = \mu' \circ \xi$.
\par
We will construct in parallel a morphism of $S$-schemes $X^{M' M} \to X$ which induces $\eta$. 
For each $g' \in M'$, let $p_{g'} \colon M \to M'M$ be the map given by $g \mapsto g'g$ for all $g\in M$. 
By the universal property of fibered products, let $h \colon X^{M'M} \to X^{M'}$ be the $S$-scheme morphism induced by the family of $S$-scheme morphisms $f \circ p^*_{g'}\colon X^{M'M} \to X$,  
where $g'$ runs over $M'$ (cf.~Remark~\ref{rem:X-E-X-F}). 
It is clear that $h$ induces the  map $\xi \colon A^{M' M} \to A^{M'}$ defined above.  
Therefore, the $S$-scheme morphism $f' \circ h \colon  X^{M' M}\to X$ induces the local defining map 
$\eta \colon A^{M' M} \to A$ of $\tau' \circ \tau$.
\end{proof}

Let $G$, $S$, $X$, $Y$, and $A$ as in the above lemma.
We denote by $\CA(G,S,X,Y)$ the set consisting of all cellular automata $\tau \colon A^G \to A^G$ over the group $G$ and the schemes $S$, $X$, $Y$.
Observe that the identity map $A^G \to A^G$ is in $\CA(G,S,X,Y)$ since it is a cellular automaton with memory set $\{1_G\}$ whose associated local defining map is induced by the identity $S$-scheme morphism $X \to X$.
Consequently, from Lemma~\ref{l:monoid} we immediately deduce  the following. 

\begin{proposition}
The set $\CA(G,S,X,Y)$ is a monoid for the composition of maps.
\end{proposition}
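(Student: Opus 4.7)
The plan is to verify the three monoid axioms directly, relying on Lemma~\ref{l:monoid} for closure under composition and on the definition of cellular automata over schemes for the identity.

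First, associativity is immediate because composition of set-theoretic maps $A^G \to A^G$ is always associative, and $\CA(G,S,X,Y)$ is by definition a subset of the set of all such maps. So nothing needs to be done beyond noting this.

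Second, I would check that the identity map $\Id_{A^G} \colon A^G \to A^G$ belongs to $\CA(G,S,X,Y)$. The natural choice of memory set is $M = \{1_G\}$, so that $A^M \cong A$ canonically; the local defining map $\mu_M \colon A^M \to A$ is then the identity on $A$. The $S$-scheme $X^M$ is canonically isomorphic to $X$, and taking $f \colon X^M \to X$ to be the identity $S$-morphism of $X$ under this identification yields $f^{(Y)} = \Id_A$ by the very definition of $f^{(Y)}$: it sends an $S$-morphism $\varphi \colon Y \to X$ to $\Id_X \circ \varphi = \varphi$. Hence $\Id_{A^G}$ satisfies the hypothesis of Definition~\ref{def:ca-over-schemes}, so it lies in $\CA(G,S,X,Y)$ and serves as a two-sided identity for composition.

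Third, closure under composition is exactly the content of Lemma~\ref{l:monoid}: given $\tau, \tau' \in \CA(G,S,X,Y)$ with memory sets $M, M'$ and local defining maps induced by $S$-morphisms $f \colon X^M \to X$ and $f' \colon X^{M'} \to X$ respectively, the lemma produces a memory set $M'M$ for $\tau' \circ \tau$ and an explicit $S$-morphism $f' \circ h \colon X^{M'M} \to X$ whose induced map on $Y$-points is the local defining map of $\tau' \circ \tau$. Thus $\tau' \circ \tau \in \CA(G,S,X,Y)$.

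Since there is no genuine obstacle here---the work has already been done in Lemma~\ref{l:monoid}---the proof reduces to a short three-line assembly: record associativity, exhibit the identity as in the previous paragraph, and cite Lemma~\ref{l:monoid} for closure. If anything merits a brief sentence of justification, it is only the observation that the identity $S$-morphism of $X$ induces the identity on $A = X(Y) = \Mor_S(Y,X)$, which is tautological from the definition of $f^{(Y)}$.
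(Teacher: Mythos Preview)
Your proposal is correct and follows essentially the same approach as the paper: the paper observes that the identity map $A^G \to A^G$ lies in $\CA(G,S,X,Y)$ via the memory set $\{1_G\}$ and the identity $S$-morphism $X \to X$, and then deduces the monoid structure immediately from Lemma~\ref{l:monoid}. You have merely made the associativity remark and the verification that $f^{(Y)} = \Id_A$ slightly more explicit than the paper does.
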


We say that a cellular automaton $\tau \in \CA(G,S,X,Y)$ is \emph{invertible}
if $\tau$ is an invertible element of the monoid $\CA(G,S,X,Y)$, i.e., there exists 
$\sigma \in \CA(G,S,X,Y)$
such that $\sigma \circ \tau = \tau \circ \sigma$ is the identity map on $X(Y)^G$.

\section{The closed image property}
\label{sec:cip-schemes}

The goal of this section is to establish the following closed image theorem.

\begin{theorem}
\label{t:closed-image}
Let $G$ be a group and let $K$ be an algebraically closed  field. 
Let $X$ be an algebraic variety over $K$ and 
let $A \coloneqq X(K)$ denote the set of $K$-points of $X$.
 Suppose that 
the variety $X$ is complete 
or that the field $K$ is uncountable.
Let $\tau \colon  A^G\to A^G$ be a cellular automaton over the group $G$ and the $K$-scheme $X$ with coefficients in $K$.
Then $\tau$ has the closed image property, that is, $\tau(A^G)$ is closed in $A^G$ for the prodiscrete topology. 
\end{theorem}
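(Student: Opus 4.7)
My plan is to reduce to the nonemptiness of a certain inverse limit via Lemma~\ref{l:charact-cip}, then exploit the scheme-theoretic nature of the defining data to identify each stage of the inverse system with the $K$-points of a closed subvariety, and finally quote the inverse-limit result from Appendix~\ref{sec:inverse}.

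First, let $d$ be a configuration in the prodiscrete closure of $\tau(A^G)$ and form the inverse system $(Z_\Omega,\varphi_{\Omega,\Lambda})_{\Omega \in \PP}$ as in Section~\ref{subsec:cip-ca}. By Lemma~\ref{l:charact-cip} it suffices to prove that $\varprojlim Z_\Omega \neq \varnothing$. The nonemptiness of each $Z_\Omega$ is guaranteed by~\eqref{e:Z-omega-not-empty}, so the point is really to assemble compatible local choices into a global one. To algebrize the picture, choose a memory set $M$ and a $K$-scheme morphism $f \colon X^M \to X$ inducing the local defining map $\mu_M$. For each $g \in \Omega'$ we have $gM \subset \Omega$, so composing $f$ with the projection $X^\Omega \to X^{gM} \cong X^M$ yields a $K$-morphism $X^\Omega \to X$; these combine, via the universal property of the fibered product, into a morphism $\tilde\tau_\Omega \colon X^\Omega \to X^{\Omega'}$ whose induced map on $K$-points is exactly $\tau_\Omega$. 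Viewing $d\vert_{\Omega'} \in A^{\Omega'} = X^{\Omega'}(K)$ as a $K$-point of $X^{\Omega'}$, we define $\mathcal{Z}_\Omega \coloneqq \tilde\tau_\Omega^{-1}(d\vert_{\Omega'})$, a closed subscheme of $X^\Omega$ with $\mathcal{Z}_\Omega(K) = Z_\Omega$.

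The second ingredient is that the connecting maps come from geometry: for $\Omega \subset \Lambda$ the projection $X^\Lambda \to X^\Omega$ sends $\mathcal{Z}_\Lambda$ into $\mathcal{Z}_\Omega$ (since restricting to $\Omega'\subset \Lambda'$ preserves the fiber condition) and induces $\varphi_{\Omega,\Lambda}$ on $K$-points. Thus $(\mathcal{Z}_\Omega)$ is a cofiltered system of nonempty closed subvarieties of $X^\Omega$ with morphism transitions, and $\varprojlim Z_\Omega = \varprojlim \mathcal{Z}_\Omega(K)$. At this stage the two hypotheses enter through the auxiliary result of Appendix~\ref{sec:inverse}. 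When $X$ (hence each $X^\Omega$, hence each $\mathcal{Z}_\Omega$) is complete, morphisms from complete varieties are closed, so each image $\varphi_{\Omega,\Lambda}(\mathcal{Z}_\Lambda)$ is closed in $\mathcal{Z}_\Omega$; Noetherianity of $X^\Omega$ forces the decreasing family $\{\varphi_{\Omega,\Lambda}(\mathcal{Z}_\Lambda)\}_{\Lambda \supset \Omega}$ to stabilize, yielding a subsystem with surjective transitions and hence a nonempty inverse limit by a Mittag-Leffler argument. When $K$ is uncountable the images are only constructible, but one obtains the same conclusion from the theorem on nonempty inverse limits of constructible subsets of algebraic $K$-varieties, the key point being that uncountability of $K$ rules out the obvious pathologies of inverse limits of decreasing constructible sets (e.g.\ $\bigcap_n (\A^1_K \setminus \{a_n\}) = \varnothing$ when $K$ is countable).

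The main obstacle I anticipate is the passage from stagewise nonemptiness to nonemptiness of the inverse limit over the (possibly uncountable) poset $\PP$ of finite subsets of $G$: neither Zariski-compactness nor Noetherianity of the individual $\mathcal{Z}_\Omega$ is automatically inherited by the limit, and the poset $\PP$ need not be countable. In the complete case the difficulty is handled by reducing the full poset to a sequence of stabilized surjective transitions; in the uncountable case one reduces to a countable cofinal subsystem and appeals to the Baire-type dichotomy encoded in the Appendix~\ref{sec:inverse} lemma. Once that technical lemma is in hand the conclusion follows immediately, so the conceptual work is entirely in steps one through three above.
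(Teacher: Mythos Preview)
Your overall architecture matches the paper's: pick $d$ in the closure, form the inverse system $(Z_\Omega)$, algebrize $\tau_\Omega$ to a $K$-morphism $X^\Omega\to X^{\Omega'}$, recognize $Z_\Omega$ as the closed points of a closed (hence constructible) subset of $X^\Omega$, and then split into the two cases. The uncountable-$K$ branch is also essentially the paper's argument, except that the paper makes the reduction to a countable cofinal subsystem \emph{explicit}: one replaces $G$ by the countable subgroup $H=\langle M\rangle$ using Lemma~\ref{l:induction}(v), and only then does $\PP$ admit a cofinal sequence to which Lemma~\ref{l:inverse-limit-seq-const} applies. You should say this; ``reduce to a countable cofinal subsystem'' is not automatic when $G$ is uncountable.

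The real gap is in your complete case. Noetherian stabilization of the images $\varphi_{\Omega,\Lambda}(\mathcal Z_\Lambda)$ is fine and does produce a universal subsystem with nonempty terms and surjective transitions, but ``Mittag--Leffler'' as you invoke it only guarantees a nonempty inverse limit over a \emph{countable} cofiltered index set. Over an uncountable poset, surjective transitions between nonempty sets do not force $\varprojlim\neq\varnothing$ (e.g.\ take $Z_F=\{\text{injections }F\hookrightarrow\N\}$ for finite $F$ inside an uncountable set). The paper avoids this trap by applying Stone's theorem (Theorem~\ref{t:stone}) directly to the $Z_\Omega$ with their induced Zariski topologies: they are nonempty, quasi-compact, $T_0$, and the transitions are continuous and \emph{closed} because projections between proper $K$-schemes are proper. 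That topological compactness is exactly what replaces the missing countability. Your stabilization step is then unnecessary; alternatively, you could first reduce to countable $G$ via Lemma~\ref{l:induction}(v) here as well, after which your Mittag--Leffler argument becomes valid.
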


For the proof, we consider a configuration $d \in A^G$ that is in the closure of $\tau(A^G)$ with respect to the prodiscrete topology on $A^G$, and keep the notation introduced in 
Subsection~\ref{subsec:cip-ca}.
We want to show that $d$ is in the image of $\tau$.
\par
Since $K$ is algebraically closed,
the set $X^\Omega(K)$ can be identified, for every $\Omega \in \PP$,
with the set of closed points of the $K$-algebraic variety $X^\Omega$
(see Remark~\ref{rem:closed-points-rat-points}).
Thus, we have  natural inclusions
\[ 
Z_\Omega \subset A^{\Omega} = (X(K))^{\Omega} = X^{\Omega}(K) \subset X^{\Omega},
\]  
so that we can  equip $A^\Omega$  and $Z_\Omega$ with the topology induced by the 
$K$-algebraic variety $X^{\Omega}$.
\par
For every $\Omega \in \PP$, the space $Z_\Omega$ is non-empty  
by~\eqref{e:Z-omega-not-empty} and $T_0$ since the underlying topological space of any scheme is $T_0$.
Note also that $Z_\Omega$ is quasi-compact by Lemma~\ref{l:prop-alg-var}.(ii) since $Z_\Omega \subset X^\Omega$ and $X^\Omega$ is an algebraic variety.
\par
For all $\Omega, \Lambda \in \PP$ with $\Omega \subset \Lambda$,
the transition map $\varphi_{\Omega,\Lambda} \colon Z_\Lambda \to Z_\Omega$  is  continuous since it is   obtained by restricting the  $K$-scheme projection morphism
$X^\Lambda \to X^\Omega$.
\par
We claim that $\tau_\Omega \colon A^\Omega \to A^{\Omega'}$ is induced by a $K$-scheme morphism
$X^{\Omega} \to X^{\Omega'}$.  
To see this, consider, for each $g \in \Omega'$,
the $K$-scheme projection morphism  $p_g \colon X^\Omega \to X^{g M}$  
and the $K$-scheme isomorphism  $i_g \colon X^{gM} \to X^M$  induced by the bijective map $g M \to  M$ given by left multiplication by $g^{-1}$.
Then the family of  $K$-scheme morphisms $f \circ i_g \circ p_g \colon X^{\Omega} \to X$, $g \in \Omega'$,
 yields, by the universal property of $K$-fibered products,
 a  $K$-scheme morphism $f_\Omega \colon X^{\Omega} \to X^{\Omega'}$.
It is clear from this construction that $f_\Omega$ extends $\tau_\Omega$.
This proves our claim and shows in particular that $\tau_\Omega$ is continuous.
\par
We shall first establish  Theorem~\ref{t:closed-image} in the case when $X$ is complete.

\begin{proof}[Proof of Theorem~\ref{t:closed-image} when $X$ is complete]
Suppose that $X$ is complete, i.e., the $K$-scheme $X$ is proper.
\par
Let us check that all the hypotheses of  Stone's theorem (cf.~Theorem~\ref{t:stone})  
are satisfied by  the inverse system $(Z_\Omega)_{\Omega \in \PP}$.
\par
We have already seen that the topological spaces $Z_\Omega$ are non-empty quasi-compact $T_0$ spaces and that the transition maps
$\varphi_{\Omega,\Lambda} \colon Z_\Lambda \to Z_\Omega$ are continuous.
\par 
It only remains to verify that the transition maps $\tau_{\Omega,\Lambda}$ are closed
for all $\Omega,\Lambda \in \PP$ with $\Omega \subset \Lambda$.
\par
Since $X$ is proper and the fibered product of proper schemes is proper, 
the $K$-scheme $X^{\Omega}$ is proper and hence quasi-compact and Jacobson
for every $\Omega \in \PP$.
As $A^{\Omega'} = X^{\Omega'}(\overline{K})$ is the set of closed points of $X^{\Omega'}$ and
$d\vert_{\Omega'} \in A^{\Omega'}$,
the continuity of $\tau_\Omega$ implies that
$Z_\Omega$ is closed in $A^{\Omega'}$.
\par
On the other hand, for all $\Omega,\Lambda \in \PP$ with $\Omega \subset \Lambda$,
the transition map  $\varphi_{\Omega,\Lambda} \colon Z_\Lambda \to Z_\Omega$ is the restriction of the $K$-scheme projection
$X^\Lambda \to X^\Omega$.
As morphisms between proper schemes are proper and therefore closed, 
we deduce that
all transition maps $\varphi_{\Omega,\Lambda}$ are closed. 
\par
It therefore follows from  Stone's theorem
 that $\varprojlim Z_\Omega \not= \varnothing$.
 We conclude that $\tau$ has the closed image property with respect to the prodiscrete topology on $A^G$ by applying 
 Lemma~\ref{l:charact-cip}.
 \end{proof}

To complete the proof of Theorem~\ref{t:closed-image},
it remains to treat the case when the field $K$ is uncountable.

\begin{proof}[Proof of Theorem~\ref{t:closed-image} when $K$ is uncountable]
Let us assume  that the field $K$ is uncountable.
\par
Let $H$ denote the subgroup of $G$ generated by the memory set $M$.  Consider the cellular automaton 
$\sigma \colon A^H \to A^H$ over the alphabet $A$ and the group $H$ obtained by restricting $\tau$ to $H$. Observe that $H$ is finitely generated and therefore countable.
On the other hand,
$\sigma$ is also a cellular automaton over the $K$-scheme $X$
with coefficients in $\overline{K}$.
Indeed, it admits the same memory set $M$ and the associated   local defining map is also induced by 
$f\colon X^M\to X$.
As it follows from Assertion~(v) in Lemma~\ref{l:induction}  that $\tau(A^G)$ is closed in $A^G$  for the prodiscrete topology on $A^G$  if and only if $\sigma(A^H)$ is closed in $A^H$ for the prodiscrete topology on $A^H$,
we can assume in the sequel, without loss of generality,
 that the group $G$ is countable. 
\par 
Since $G$ is countable, there is a sequence  $(\Omega_n)_{n \in \N}$ of finite subsets of $G$ such that $G = \bigcup_{n \in \N} \Omega_n$ and $\Omega_n \subset \Omega_{n + 1}$ for all $n \in \N$.
Consider the inverse sequence $(Z_n,\varphi_{m n})$, where
\[
Z_n \coloneqq Z_{\Omega_n}
\]
and
\[
\varphi_{n m} \coloneqq \varphi_{\Omega_n,\Omega_m} \colon Z_m \to Z_n 
\]
for all $n,m \in \N$ such that $n \leq m$.
Note that the inverse sequence $(Z_n,\varphi_{n m})$ has the same inverse limit as the inverse system
$(Z_\Omega,\varphi_{\Omega,\Lambda})$ since the set consisting of all $\Omega_n$,  $n \in \N$, is cofinal in $\PP$.
\par
We observe now that, for each $n\in \N$, the set 
\[
C_n\coloneqq f_{\Omega_n}^{-1}(d\vert_{\Omega_n'})\subset X^{\Omega_n}
\] 
is a closed and hence constructible subset of the $K$-algebraic variety $X^{\Omega_n}$
and that  $Z_n$ is the set of closed points of $C_n$. 
Thus, by using Lemma~\ref{l:inverse-limit-seq-const}, we conclude that 
$\varprojlim Z_\Omega = \varprojlim Z_n \neq \varnothing$.
 By Lemma~\ref{l:charact-cip},
this shows that $d$ is in the image of $\tau$.
Thus $\tau$ has the closed image property for the prodiscrete topology on $A^G$.
\end{proof}

\section{Surjunctivity}
\label{s:construction}

The aim of this section is to prove the following result 
which extends Theorem~\ref{t:surj-ca}.

\begin{theorem}
\label{t:surj-ca-general}
Let $G$ be a locally residually finite group with a finite subset $M$. 
Let $K$ be a field with an algebraic closure $\overline{K}$. 
Let $X$ be a $K$-algebraic variety 
and let $f\colon X^M \to X$ be a morphism of $K$-schemes.
For each $K$-scheme $Y$, 
let $\tau^{(Y)}$ denote the cellular automaton
over the group $G$ and the alphabet $X(Y)$ 
with memory set $M$ and associated local defining map $f^{(Y)}$. 
Suppose that $X$ is complete or that $K$ is uncountable. 
Suppose also that  $\tau^{(L)}$ is injective for some field extension $L/\overline{K}$. 
Then the following hold:
\begin{enumerate}[label=(\roman*)]
\item
$\tau^{(L')}$ is surjective for every algebraically closed field extension $L'/K$. 
\item
$\tau^{(K')}$ is injective for every field extension $L /K' /K$.  
\end{enumerate}
\end{theorem}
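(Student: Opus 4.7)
The plan is to dispatch (ii) by a direct restriction argument and (i) by reducing, via density and closedness, to periodic configurations and then invoking a radiciality-plus-Ax--Grothendieck argument. For (ii), any tower $L/K'/K$ yields a natural $G$-equivariant inclusion $X(K') \hookrightarrow X(L)$ (pre-composing with $\Spec L \to \Spec K'$), and the cellular automata $\tau^{(K')}$ and $\tau^{(L)}$ restrict to each other, since both are induced by the same $K$-scheme morphism $f$. Hence $\tau^{(K')}$, being the restriction of an injective map, is itself injective.

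For (i), fix an algebraically closed extension $L'/K$. By Lemma~\ref{l:induction}(ii) I would first replace $G$ by $\langle M \rangle$, which is finitely generated and hence residually finite by the local residual finiteness hypothesis on $G$. Then, by Theorem~\ref{t:closed-image}, the image $\tau^{(L')}(X(L')^G)$ is closed in $X(L')^G$ for the prodiscrete topology, while by Lemma~\ref{l:periodic-conf-are-dense} the periodic configurations are dense. It therefore suffices to show that every periodic configuration lies in the image. Fix such $c \in X(L')^G$, fixed by a normal subgroup $H \triangleleft G$ of finite index (replacing the stabilizer by its normal core if needed). Set $V := X^{H \backslash G}$, a $K$-scheme of finite type; the bijection~\eqref{e:def-rho-H} identifies $\Fix(H) \subset X(L')^G$ with $V(L')$. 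Because $\tau^{(L')}$ is $G$-equivariant it preserves $\Fix(H)$, and the induced endomorphism of $V(L')$ arises from a $K$-morphism $\phi : V \to V$ built from $f$ and the action of $G$ on $H \backslash G$. The hypothesis of injectivity of $\tau^{(L)}$ translates to injectivity of $\phi^{(L)} : V(L) \to V(L)$, and what we need is surjectivity of $\phi^{(L')} : V(L') \to V(L')$.

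The principal obstacle is this transition from $L$-injectivity to $L'$-surjectivity, since $L$ and $L'$ need not be comparable. I would bridge it via \emph{radiciality}: consider the self-fiber product $W := V \times_{\phi, V, \phi} V$, whose diagonal $\Delta : V \hookrightarrow W$ is a closed immersion; injectivity of $\phi^{(L)}$ translates to $W \setminus \Delta(V)$ having no $L$-point. But $W \setminus \Delta(V)$ is a finite-type $K$-scheme, and since $L \supseteq \overline{K}$, every non-empty finite-type $K$-scheme has a closed point whose residue field is finite algebraic over $K$ and so embeds in $\overline{K} \subseteq L$; hence $W \setminus \Delta(V) = \varnothing$, $\Delta$ is surjective, and $\phi$ is radicial. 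Radiciality is stable under base change, so $\phi^{(L')}$ is injective, and the classical Ax--Grothendieck theorem, applied to $\phi_{L'} : V_{L'} \to V_{L'}$ over the algebraically closed field $L'$, yields surjectivity of $\phi^{(L')}$, placing $c$ in the image of $\tau^{(L')}$.
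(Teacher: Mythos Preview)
Your argument is correct and follows the paper's overall architecture: restrict to the finitely generated subgroup $\langle M\rangle$, invoke the closed-image theorem together with density of periodic configurations, and reduce everything to showing that the induced $K$-endomorphism $\phi=\widetilde{\tau_H}$ of $V=X^{H\backslash G}$ satisfies ``$\phi^{(L)}$ injective $\Rightarrow$ $\phi^{(L')}$ surjective''. The paper bridges this last step a little differently: it restricts to $\overline{K}\subset L$ to obtain injectivity of $\widetilde{\tau_H}^{(\overline{K})}$, identifies $\overline{K}$-points with closed points to deduce that $\widetilde{\tau_H}$ itself is an injective scheme endomorphism, applies Ax--Grothendieck \emph{over $K$}, and then base-changes the resulting surjectivity to $L'$. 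You instead package the passage through $\overline{K}$ as a radiciality statement and postpone Ax--Grothendieck until after base-changing to $L'$. The underlying content is the same---the decisive input in both routes is that every nonempty finite-type $K$-scheme admits a $\overline{K}$-point---but your formulation makes the universal-injectivity explicit and avoids the (slightly delicate, when $K\neq\overline{K}$) identification of $\overline{K}$-points with closed points of the $K$-scheme $V$. One small caveat: your assertion that $\Delta\colon V\hookrightarrow W$ is a \emph{closed} immersion presupposes that $\phi$ is separated, which is not part of the hypotheses; since the diagonal is in any case an immersion, you can repair this by showing separately that the finite-type $K$-schemes $W\setminus\overline{\Delta(V)}$ and $\overline{\Delta(V)}\setminus\Delta(V)$ have no $\overline{K}$-points and are therefore empty.
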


\begin{proof}[Proof of Theorem~\ref{t:surj-ca} from Theorem~\ref{t:surj-ca-general}] 
It suffices to take $L=L' = \overline{K} = K$ in Theorem \ref{t:surj-ca-general}.(i) to conclude. 
\end{proof}
Before giving the proof of Theorem~\ref{t:surj-ca-general},
we first introduce the following general construction. 
Consider a  cellular automaton $\tau\colon A^G \to A^G$ defined over the schemes $S,X,Y$. 
Thus, $X$ and $Y$ are $S$-schemes,  
$A=X(Y)$, and $\tau$ admits a memory set $M$ such that the associated local defining map 
$\mu \colon A^M \to A$ satisfies $\mu = f^{(Y)}$ 
for some $S$-scheme  morphism  $f\colon X^M \to X$.
Let $H$ be a finite index subgroup of $G$ and let $\Fix(H)\subset A^G$ denote the set of configurations that are fixed by $H$.
Consider the bijective map $\rho_H \colon A^{H \backslash G} \to \Fix(H)$ defined 
by~\eqref{e:def-rho-H}. 
For each right coset $\gamma \in H \backslash G$, we consider
the map
$p_{\gamma} \colon M \to H \backslash G$, defined by 
$p_{\gamma}(h) \coloneqq \gamma h$ for all $h \in M$,
and 
 the induced morphism of $S$-schemes
\[
\varphi_{\gamma} \coloneqq p_{\gamma}^* \colon X^{H \backslash G} \to X^M
\]
(cf.~Remark~\ref{rem:X-E-X-F}). 
Observe that $\varphi_{\gamma}^{(Y)} (\overline{z})=(g^{-1}z)\vert_M$ for all $g\in \gamma$ and 
$\overline{z} \in A^{H\backslash G}$ such that $z=\rho_H(\overline{z})$.
By the universal property of fibered products, 
the family of $S$-scheme morphisms $f\circ \varphi_{\gamma}  \colon X^{H \backslash G} \to X$, where  $\gamma$ runs over  $H\backslash G$,
induces an $S$-scheme morphism
$\widetilde{\tau_H} \colon X^{H \backslash G} \to X^{H \backslash G}$.
Let $\overline{z}\in A^{H \backslash G}$ and $z=\rho_H(\overline{z})$.
 Then, for all $\gamma \in H \backslash G$ and $g \in \gamma$,
  we have that
\[
\widetilde{\tau_H}^{(Y)}(\overline{z})(\gamma)=\mu \circ \varphi_{\gamma}^{(Y)} (\overline{z})=\mu((g^{-1}z)\vert_M),
\]
and
\[
\rho_H^{-1}\circ \tau\circ \rho_H(\overline{z})(\gamma)=\rho_H^{-1}\circ \tau(z)(\gamma)=\tau(z)(g)=\mu ((g^{-1}z)\vert_M).
\]
Therefore,  
\begin{equation}
\label{e:conjugate}
\widetilde{\tau_H}^{(Y)} =\rho_H^{-1}\circ \tau\vert_{\text{Fix}(H)} \circ \rho_H.
\end{equation}
In other words, the diagram
\[
\begin{CD}
A^{H \backslash G} @>{\widetilde{\tau_H}^{(Y)}}>> A^{H \backslash G}   \\
@V{\rho_H}VV @VV{\rho_H}V \\
\text{Fix}(H) @>{\tau\vert_{\text{Fix}(H)}}>> \text{Fix}(H)
\end{CD}
\]
is commutative.

\begin{proof}[Proof of Theorem~\ref{t:surj-ca-general}]
Let $Y$ be a $K$-scheme.
Let $G'$ denote the subgroup of $G$ generated by $M$ and let  
$\sigma^{(Y)} \colon X(Y) ^{G'} \to X(Y)^{G'}$ be  the restriction of 
$\tau^{(Y)}$ to $G'$.
Then  $\sigma^{(Y)}$ is a cellular automaton over the group $G'$ with memory set
$M$ and associated local defining map $f^{(Y)}$.
Note that $G'$ is residually finite since $G$ is locally residually finite.
On the other hand, it follows from Assertions (i) and (ii) in Lemma~\ref{l:induction} that  $\tau^{(Y)}$ is injective (resp.surjective) if and only if 
$\sigma^{(Y)}$ is injective (resp.~surjective). 
Therefore, up to replacing $G$ by $G'$, we can assume that $G$ is residually finite.
\par
Let $\FF$ denote the set of all finite index subgroups of $G$. 
For $H\in \FF$, consider the morphism of $K$-schemes 
$\widetilde{\tau_H} \colon X^{H \backslash G} \to X^{H \backslash G}$ 
defined above with $S=\Spec(K)$.
\par 
For (ii), consider any field extension $L /K' /K$. 
We then have a natural inclusion $X(K') \subset X(L)$ and thus $\tau^{(K')}$ is the restriction of $\tau^{(L)}$ to $X(K')^G$. 
Since $\tau^{(L)}$ is injective, $\tau^{(K')}$ is injective as well and this proves (ii). 
In particular, by taking $K'=\overline{K}$, we see that $\tau^{(\overline{K})}$ is injective. 
As $\widetilde{\tau_H}^{(\overline{K})}$ is conjugate to $\tau^{(\overline{K})}\vert_{\Fix(H)}$
by~\eqref{e:conjugate},
we deduce that $\widetilde{\tau_H}^{(\overline{K})}$ 
is also injective. 
Now, since $X(\overline{K})^{H \backslash G}$ is the set of closed points of $X^{H \backslash G}$ by Lemma~\ref{l:closed-points-alg-closed-var}, 
it follows from  Lemma~\ref{l:mor-var-closed-points}.(i) that 
$\widetilde{\tau_H} \colon X^{H \backslash G} \to X^{H \backslash G}$ 
is an injective $K$-scheme endomorphism of the $K$-algebraic variety $X^{H \backslash G}$.
Therefore, $\widetilde{\tau_H}$ is surjective by the Ax-Grothendieck theorem 
(cf.~Theorem~\ref{t:surj-grothendieck}). 
\par
For (i), consider an algebraically closed field extension $L'/K$. 
Note that the alphabet of $\tau^{(L')}$ is $A=X(L')=X_{L'}(L')$. 
Observe that $X_{L'}$ is a complete $L'$-algebraic variety if $X$ is complete and 
that $L'$ is uncountable if $K$ is uncountable.  
Since  surjectivity  is preserved under base change (cf.~Lemma~\ref{l:surj-base-change}), $\widetilde{\tau_H}\times \Id_{L'}$ is also a surjective morphism of $L'$-algebraic varieties. 
By applying Lemma~\ref{l:mor-var-closed-points}.(ii), we deduce that 
$\widetilde{\tau_H}^{(L')}= (\widetilde{\tau_H} \times \Id_{L'})^{(L')}$ is surjective as well.
This implies that $\tau^{(L')}\vert_{\Fix(H)}$ is also surjective by~\eqref{e:conjugate}.
Thus $\Fix(H)= \tau^{(L')}(\Fix(H))\subset \tau^{(L')}(A^G)$.
Let   $E \subset A^G$ denote the set of periodic configurations. 
Then we have that $E=\bigcup_{H\in \FF}\Fix(H) \subset \tau^{(L')}(A^G)$.
On the other hand, since the group $G$ is residually finite, 
the set $E$ is dense in $A^G$ for the prodiscrete topology 
by virtue of Lemma~\ref{l:periodic-conf-are-dense}. 
As $\tau^{(L')}(A^G)$ is closed in $A^G$ for the prodiscrete topology by Theorem~\ref{t:closed-image}, 
we conclude that $\tau^{(L')}(A^G)=A^G$. Thus $\tau^{(L')}$ is surjective and this completes the proof of (i). 
\end{proof}
\section{Reversibility}

\begin{proof}[Proof of Theorem~\ref{t:main-reversible}]
Let $M$ be a memory set of $\tau$ such that the associated local defining map $\mu \colon A^M \to A$ is induced by a morphism of $K$-schemes $f \colon X^M \to X$.  
We can suppose that $1_G \in M$. 
To prove that $\tau$ is reversible, it suffices to show that the following condition is satisfied:
\begin{align*}
\tag{$*$}  
& \textit{There exists a finite set } N\subset G \textit{ such that for any }y\in A^G, \\
& \tau^{-1}(y)(1_G) \textit{ only depends on } y \vert_N \in A^N.
\end{align*}
Indeed, if $(*)$ holds, then there is a map $\nu\colon A^N\to A$ such that 
$\tau^{-1}(d)(1_G) =\nu(d\vert_N)$ for all $d\in A^G$. 
As $\tau$ is $G$-equivariant,
i.e., $\tau(g c) = g \tau(c)$ for all $g \in G$ and $c \in A^G$
(this immediately follows from Formula~\eqref{e;local-property}),
its inverse  
$\tau^{-1}$ is also $G$-equivariant. 
Hence, for all $d \in A^G$, we have that
\[
\tau^{-1}(d)(g)=g^{-1}\tau^{-1}(d)(1_G)=\tau^{-1}(g^{-1}d)(1_G)=\nu(g^{-1}d\vert_N),
\]
which implies that $\tau^{-1}$ is  reversible with memory set $N$ and associated local defining 
map  $\nu$.
\par
To prove $(*)$, we proceed by contradiction.
So let us suppose  that  $(*)$ is not satisfied.
 Then, using the notation introduced at the beginning of Subsection~\ref{subsec:cip-ca},
 for each $\Omega \in \PP$, there exist  $e_{\Omega},e'_{\Omega}\in A^G$ such that 
\begin{equation} \label{condition}
e_{\Omega}\vert_{\Omega'}=e'_{\Omega}\vert_{\Omega'} \quad \text{and} \quad \tau^{-1}(e_{\Omega})(1_G)\neq \tau^{-1}(e'_{\Omega})(1_G). 
\end{equation}
Let  $\PP_M$ be the subset of $\PP$ consisting of all finite subsets of $G$ 
that contain $M$. 
Let $\Omega \in \PP_M$. 
Consider the finite subset $\Omega' \subset G$,
the map $\tau_\Omega \colon A^{\Omega}\to A^{\Omega'}$,
and the
morphism of $K$-schemes $f_{\Omega} \colon X^{\Omega} \to X^{\Omega'}$
defined in Subsection \ref{subsec:cip-ca}. 
Recall in particular that  
$f_{\Omega}^{(K)}=\tau_{\Omega}$   
and  that $\tau_{\Omega}(c\vert_{\Omega})=\tau(c)\vert_{\Omega'}$ for all $c\in A^G$.  
Consider now the morphisms of $K$-schemes
\[
F_{\Omega}\coloneqq f_{\Omega} \times f_{\Omega} \colon  X^{\Omega}\times_K  X^{\Omega}\to  X^{\Omega'}\times_K  X^{\Omega'}
\] 
and the projection morphisms 
$\pi_{\Omega} \colon X^{\Omega}\times_K  X^{\Omega}\to X^{1_G} \times_K  X^{1_G}$.  
Denote by $\Delta\colon X^{1_G} \to X^{1_G}\times_K X^{1_G}$ and $\Delta_{\Omega'}\colon X^{\Omega'} \to  X^{\Omega'}\times_K  X^{\Omega'}$ the diagonal morphisms. 
We define a subset $V_{\Omega} \subset X^{\Omega}\times_K  X^{\Omega}$ by
\[
V_{\Omega}\coloneqq F_{\Omega}^{-1}(\Delta_{\Omega'}(X^{\Omega'}))\backslash \pi_{\Omega}^{-1}(\Delta(X^{1_G})).
\]
By Chevalley's theorem, $\Delta_{\Omega'}(X^{\Omega'})$ and $\Delta(X^{1_G})$ are constructible sets. 
Since the inverse image by a continuous map of a constructible set is constructible and the set difference of two constructible sets is constructible, $V_{\Omega}$ is also constructible. 
Observe that the closed points of $F_{\Omega}^{-1}(\Delta_{\Omega'}(X^{\Omega'}))$ are 
the pairs $(u,v)\in  A^{\Omega}\times A^{\Omega}$ such that $\tau_{\Omega}(u)=\tau_{\Omega}(v)$. 
Similarly, the set of closed points of 
$\pi_{\Omega}^{-1}(\Delta(X^{1_G}))$ are the pairs  
$(u,v)\in  A^{\Omega}\times A^{\Omega}$ such that $u(1_G)= v(1_G)$. 
Therefore, we deduce by \eqref{condition} that the set of closed points $Z_{\Omega}$ of $V_{\Omega}$ is nonempty for all $\Omega \in \PP_M$.
\par 
For all $\Omega, \Lambda \in \PP_M$ with  $\Omega \subset \Lambda$,  
consider the projection morphism 
$p_{\Omega \Lambda } \colon X^{\Lambda}\to X^{\Omega}$.
Then the morphism of $K$-schemes
\[
\pi_{\Omega \Lambda} \coloneqq p_{\Omega \Lambda}\times p_{\Omega \Lambda} \colon X^{\Lambda}\times_K  X^{\Lambda} \to X^{\Omega}\times_K  X^{\Omega},
\] 
induces by restriction a map $h_{\Omega \Lambda}\colon V_{\Lambda} \to V_{\Omega}$ and hence a map $\varphi_{\Omega \Lambda}\colon Z_{\Lambda} \to Z_{\Omega}$. 
Therefore, we obtain an inverse system 
$(Z_{\Omega},\varphi_{\Omega \Lambda})$ indexed by $\PP_M$.
\par
Suppose that $G$ is countable. 
Hence, there is a sequence $(\Omega_n)_{n\in \N}$ of finite subsets of $G$, each containing $M$, such that $G=\bigcup_{n\in N}\Omega_n$ and $\Omega_n\subset \Omega_{n+1}$ for all $n\in \N$. 
For $n, m\in \N$ such that $m\geq n$, define 
\[
V_n\coloneqq V_{\Omega_n},\quad Z_n\coloneqq Z_{\Omega_n},\quad h_{nm}\coloneqq h_{\Omega_n \Omega_m}.
\] 
We then have an \textit{inverse sequence} $(V_n,h_{nm})$ of nonempty constructible sets of $K$-algebraic varieties. 
Note that $Z_n$ are the sets of closed points of $V_n$ and that $h_{nm}$ are induced by morphisms of $K$-algebraic varieties. 
Since $K$ is uncountable, Lemma \ref{l:inverse-limit-seq-const} implies that $\varprojlim Z_n \neq \varnothing$. 
Since $(\Omega_n)_{n\in \N}$ is confinal in $\PP$, we have 
\[
\varprojlim Z_{\Omega} =\varprojlim Z_n \neq \varnothing.
\]
Let us choose $(w_{n})_{n \in \N}\in \varprojlim Z_{n}$. 
We can write $w_{n}=(u_{n},v_{n})\in (A^{\Omega_n})^2$.
Observe that $(u_{n})_{n\in \N}, (v_{n})_{n\in \N} \in \varprojlim A^{\Omega_n}$. 
Since $G=\bigcup_{n\in \N} \Omega_n$, there exist $c,c'\in A^G$ such that $c\vert_{n}=u_{n}$ and $c'\vert_{n}=v_{n}$ for all $n \in \N$.
By construction of the inverse system $Z_{n}$, we have that $c(1_G)\neq c'(1_G)$. 
Therefore, $c\neq c'$.  
Again by construction, we see that, for all $n \in \N$,
\[
\tau(c)\vert_{\Omega_n'}=\tau_{\Omega_n}(c\vert_{\Omega_n})=\tau_{\Omega_n}(c'\vert_{\Omega_n})=\tau(c')\vert_{\Omega_n'}.
\]
As $G=\bigcup_{n\in \N}\Omega_n'$, we deduce that $\tau(c)=\tau(c')$, which contradicts the injectivity of $\tau$. This completes the proof when $G$ is countable. 
\par 
For an arbitrary group $G$, let $H$ denote the countable subgroup of $G$ generated by $M$. 
The restriction $\sigma \colon A^H\to A^H$ of $\tau$ to $H$ is a cellular automaton over the $K$-scheme $X$ with coefficients in $\overline{K}$. 
Indeed, it has the same memory set $M$ and its local defining map is also induced by $f\colon X^M\to X$. 
By Lemma \ref{l:induction}.(iii), $\tau_H$ is also bijective. 
The above paragraph shows that $\sigma$ is reversible. 
Therefore, we can conclude by applying Lemma~\ref{l:induction}.(iv) that $\tau$ is reversible.
\end{proof} 

\section{Invertibility}

In this section, we shall establish the following result which extends
Theorem~\ref{t:inverse-also}. 

\begin{theorem}
\label{t:inverse-also-general}
Let $G$ be a locally residually finite group with a finite subset $M$. 
Let $K$ be a field of characteristic $0$ with algebraic closure $\overline{K}$.  
Suppose that $X$ is a separated and reduced $K$-algebraic variety. 
Let $f \colon X^M \to X$ be a $K$-scheme morphism.
For each $K$-scheme $Y$, 
let $\tau^{(Y)}$ denote the cellular automaton
over the group $G$ and the alphabet $X(Y)$ 
with memory set $M$ and associated local defining map $f^{(Y)}$. 
Then the following conditions are equivalent:
\begin{enumerate}[label=(\alph*)]
\item
  $\tau^{(L)}$ is reversible for some field extension $L/\overline{K}$;
  \item 
 $\tau^{(Y)}$ is reversible for every $K$-scheme $Y$. 
 \end{enumerate} 
Moreover, if these two equivalent conditions are satisfied, 
then there is a finite subset $N \subset G$ and a $K$-scheme morphism  $h \colon X^{N}\to X$ such that, for every $K$-scheme $Y$,
 the inverse cellular automaton  $(\tau^{(Y)})^{-1}$ is the cellular automaton
 with memory set $N$ whose associated local defining map is induced by $h$.
In particular, the cellular automaton
$\tau^{(Y)}$ is invertible in the monoid $\CA(G,K,X,Y)$.  
\end{theorem}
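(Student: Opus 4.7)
The direction (b) $\Rightarrow$ (a) is immediate upon setting $Y = \Spec L$, so the substance is (a) $\Rightarrow$ (b). Assume $\tau^{(L)}$ is reversible, with inverse a cellular automaton of memory set $N$ and local defining map $\nu_L \colon X(L)^N \to X(L)$. After adding $1_G$ to both $M$ and $N$ (permissible by Proposition~\ref{p:independent-of-memory}) and replacing $G$ with the subgroup $G_0 \subset G$ generated by $M \cup N$, Lemma~\ref{l:induction} reduces the task to the case $G = G_0$, which is finitely generated and hence residually finite by hypothesis. The main goal is then to produce a single $K$-scheme morphism $h \colon X^N \to X$ inducing $\nu_L$ on $L$-points; granted this, the remainder of the statement will follow.

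The construction proceeds through the conjugation setup of Section~\ref{s:construction}. For each finite-index normal subgroup $H \subset G_0$, consider the $K$-morphism $\widetilde{\tau_H} \colon X^{H \backslash G_0} \to X^{H \backslash G_0}$. By~\eqref{e:conjugate}, $\widetilde{\tau_H}^{(L)}$ is conjugate to the restriction of $\tau^{(L)}$ to $\Fix(H)$, hence bijective; in particular $\widetilde{\tau_H}$ is bijective on $\overline{K}$-points. I would then invoke, as a separate algebro-geometric lemma, the following assertion: in characteristic $0$, any $K$-scheme endomorphism of a separated, reduced $K$-algebraic variety that is bijective on $\overline{K}$-points is an automorphism. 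Applied to $\widetilde{\tau_H}$, this produces a $K$-morphism $\widetilde{\tau_H}^{-1}$ inverse to $\widetilde{\tau_H}$ as $K$-morphisms.

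To extract $h$, fix $H$ small enough in $G_0$ that the cosets $\{nH : n \in N\}$ are pairwise distinct in $H \backslash G_0$, which is achievable by residual finiteness. Let $\pi_N \colon X^{H \backslash G_0} \to X^N$ be the corresponding $K$-scheme projection; as the projection of a product onto a factor, it is faithfully flat. Let $\psi \colon X^{H \backslash G_0} \to X$ be the composition of $\widetilde{\tau_H}^{-1}$ with the evaluation at the coset $H \cdot 1_G$. On $L$-points one computes $\psi^{(L)}(\bar u) = \nu_L\bigl(\rho_H(\bar u)\vert_N\bigr)$, which depends only on $\pi_N^{(L)}(\bar u)$. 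Therefore the two pullbacks of $\psi$ to $X^{H \backslash G_0} \times_{X^N} X^{H \backslash G_0}$ agree on $L$-points; since this fiber product is a reduced $K$-variety (char $0$ ensures reducedness of products of reduced varieties over $K$) in which the $L$-points are dense, and $X$ is separated, the two pullbacks agree as $K$-morphisms. Faithfully flat descent for morphisms then yields a unique $K$-morphism $h \colon X^N \to X$ with $\psi = h \circ \pi_N$, and by construction $h^{(L)} = \nu_L$; independence of the auxiliary $H$ follows from the same density argument.

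Finally, for any $K$-scheme $Y$ let $\sigma^{(Y)}$ denote the cellular automaton with memory set $N$ and local defining map $h^{(Y)}$. By Lemma~\ref{l:monoid}, the compositions $\sigma^{(Y)} \circ \tau^{(Y)}$ and $\tau^{(Y)} \circ \sigma^{(Y)}$ are cellular automata whose local defining maps are induced by explicit $K$-scheme morphisms $X^{NM} \to X$ and $X^{MN} \to X$; these morphisms agree with the projection to $1_G$ on $L$-points by construction, hence agree as $K$-morphisms by the usual reduced-plus-separated density argument, hence agree on $Y$-points for every $Y$. Inflating from $G_0$ back to $G$ via Lemma~\ref{l:induction}.(iv) yields the uniform $N$ and $h$ promised in the conclusion, and invertibility in $\CA(G,K,X,Y)$ is then automatic. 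The main obstacle is plainly the algebro-geometric lemma promoting bijectivity on $\overline{K}$-points to automorphism; the characteristic $0$, reducedness, and separatedness hypotheses are all essential, as Frobenius in positive characteristic, the normalization of a cuspidal curve, and the line with doubled origin respectively show the failure without each.
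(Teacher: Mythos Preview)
Your proof is correct and follows the same overall architecture as the paper's: reduce to a residually finite group, show each $\widetilde{\tau_H}$ is a $K$-automorphism via Nowak's theorem (which is exactly your ``algebro-geometric lemma''; the paper applies it after base change to $\overline{K}$ and then descends via Lemma~\ref{l:flat-descent}), and extract $h$ from $\widetilde{\tau_H}^{-1}$.

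Two technical choices differ. First, to pass from $\widetilde{\tau_H}^{-1}$ to a morphism $h\colon X^N\to X$, the paper simply enlarges $N$ to a full set of coset representatives for $H\backslash G$, so that $\varphi_H\colon X^{H\backslash G}\to X^N$ becomes an isomorphism and $h$ is defined by direct transport; you instead keep $N$ fixed and use faithfully flat descent along the projection $\pi_N$, which is more machinery but avoids changing $N$. Second, to verify that $\sigma^{(Y)}$ inverts $\tau^{(Y)}$ for every $Y$, the paper checks equality on each $\Fix(H)\subset X(Y)^G$ and then invokes density of periodic configurations in the prodiscrete topology, whereas you compare the composite local defining maps $X^{NM}\to X$ and $X^{MN}\to X$ directly with the coordinate projection, using Lemma~\ref{l:equalizer} once and for all. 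Your route is arguably cleaner here, since it settles the question at the level of $K$-morphisms rather than configuration spaces; the paper's route is more elementary in that it avoids any appeal to descent. Both arrive at the same $h$.
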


\begin{proof} [Proof of Theorem~\ref{t:inverse-also} from Theorem \ref{t:inverse-also-general}] 
It suffices to take $L=K=\overline{K}$ in Theorem~\ref{t:inverse-also-general} to conclude.
\end{proof}

For the proof of Theorem~\ref{t:inverse-also-general}, we shall need some auxiliary results. 

\begin{lemma}
\label{l:equalizer}
Let $K$ be a field with algebraic closure $\overline{K}$. 
Let $L/\overline{K}$ be a field extension. 
Let $f, g \colon X \to Y$ be $K$-scheme morphisms between  $K$-algebraic varieties. 
Suppose that $Y$ is separated over $K$ and that $X$ is reduced.
If the induced set maps $f^{(L)},g^{(L)} \colon X(L) \to Y(L)$ are equal, then $f$ and $g$ are equal as morphisms of $K$-schemes.
\end{lemma}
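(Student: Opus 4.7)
The plan is to form the equalizer $E$ of $f$ and $g$ as a closed subscheme of $X$, show that it has full underlying support by means of $L$-valued points associated to closed points of $X$, and then invoke reducedness to upgrade this set-theoretic equality to a scheme-theoretic one. Since $Y$ is separated over $K$, the diagonal $\Delta_Y \colon Y \to Y \times_K Y$ is a closed immersion, so the pullback
\[
E \coloneqq X \times_{Y \times_K Y} Y
\]
along $(f,g) \colon X \to Y \times_K Y$ is a closed subscheme of $X$. By the universal property of fibered products, a morphism $\varphi \colon Z \to X$ factors through $E$ if and only if $f \circ \varphi = g \circ \varphi$.

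The main step would be to show that every closed point of $X$ lies in the underlying set of $E$. Since $X$ is of finite type over $K$, the Nullstellensatz ensures that the residue field $\kappa(x)$ at a closed point $x \in X$ is a finite extension of $K$, hence embeds into $\overline{K}$ and then into $L$. Composing $\Spec L \to \Spec \kappa(x)$ with the canonical morphism $\Spec \kappa(x) \to X$ yields an $L$-point $\varphi_x \in X(L)$ whose image is the single point $\{x\}$. The hypothesis $f^{(L)} = g^{(L)}$ gives $f \circ \varphi_x = g \circ \varphi_x$, so $\varphi_x$ factors through $E$, and in particular $x$ belongs to the image of the closed immersion $E \hookrightarrow X$.

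To finish, I would use that $X$, being of finite type over a field, is a Jacobson scheme, so every closed subset is the closure of its set of closed points; hence $E$ and $X$ share the same underlying topological space. Writing $\II \subset \OO_X$ for the ideal sheaf defining $E$, the condition $V(\II) = X$ says that on every affine open $\Spec A \subset X$ the corresponding ideal of $A$ is contained in every prime, hence in the nilradical of $A$. Because $X$ is reduced, this nilradical is zero; thus $\II = 0$, so $E = X$ as closed subschemes of $X$, and therefore $f = g$.

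The one place that really needs the hypothesis is the second paragraph: the inclusion $\overline{K} \subset L$ is what lets us realize every closed point of $X$ by an honest $L$-point, thereby converting the set-theoretic equality $f^{(L)} = g^{(L)}$ into containment of closed points in $E$. Separatedness of $Y$ and reducedness of $X$ each play their usual roles, respectively ensuring that $E$ is closed in $X$ and that the only closed subscheme of $X$ with full support is $X$ itself; no characteristic assumption is needed.
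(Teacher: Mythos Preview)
Your proof is correct and follows essentially the same approach as the paper: both form the equalizer as the pullback of the diagonal of $Y$ (a closed subscheme of $X$ by separatedness), show that every closed point of $X$ lies in it by producing an $L$-point through the embedding $\kappa(x) \hookrightarrow \overline{K} \subset L$, use that $X$ is Jacobson to deduce the equalizer has full support, and invoke reducedness to conclude the closed immersion is an isomorphism. Your final step spells out the ideal-sheaf argument that the paper handles by citing a reference for the fact that a surjective closed immersion into a reduced scheme is an isomorphism.
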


\begin{proof}
Since $Y$ is a separated scheme over $K$, the diagonal morphism $\Delta=(\Id_Y,\Id_Y) \colon Y \to Y \times_K Y$ is a closed immersion. 
Let $p_1 \colon Y\times_K Y \to Y$ and $p_2\colon Y\times_K Y \to Y$ denote  respectively the first and 
the second projection morphism. 
Consider the morphism $h\coloneqq (f,g)\colon X \to Y\times_K Y$.  
Then  $p_1\circ h=f \colon X \to Y$ and $p_2 \circ h=g \colon X \to Y$. 
Note also that $p_1\circ \Delta =p_2 \circ \Delta=\Id_Y$. 
We define the equalizer $Z\coloneqq Y \times_{Y \times_K Y} X$ of $f$ and $g$ as in the following Cartesian diagram
$$
\begin{CD}
Z=Y \times_{Y \times_K Y} X  @>{i}>> X \\
@V{\pi}VV @VV{h}V \\
Y @>{\Delta}>> Y\times_K Y
\end{CD}
$$
Since $\Delta$ is a closed immersion, we deduce that $i$ is also a closed immersion. 
Observe that $f \circ i=p_1 \circ h \circ i= p_1 \circ \Delta \circ \pi= \Id_Y \circ \pi=\pi$. 
Similarly, $g \circ i=p_2 \circ h \circ i= p_2 \circ \Delta \circ \pi= \Id_Y \circ \pi=\pi$. 
Therefore, $f \circ i=g \circ i$. 
\par
Now let $s$ be a closed point of $X$. 
By Lemma~\ref{l:closed-points}, 
the point $s$ is the image in $X$ of some $x\in X(\overline{K})\subset X(L)$. 
As $f^{(L)}(x)=g^{(L)}(x)=y\in Y(L)$ by hypothesis, we have
\[
p_1\circ (h \circ x) =f\circ x=g \circ x= p_2 \circ (h \circ x).
\] 
On the other hand, 
\[
p_1\circ (\Delta \circ y)=\Id_Y \circ y= p_2 \circ (\Delta \circ y).
\] 
The universal property of $Y\times_K Y$ implies that $h \circ x= \Delta \circ y \colon \Spec(L)\to Y\times_K Y$. 
By the universal property of $Z=Y \times_{Y \times_K Y} X$, there exists $t\in Z(L)$ such that $i \circ t=x$ and $\Delta \circ t=y$. 
In particular, let $z$ be the image of $t$ in $Z$ then $i(z)=s$. 
Since $X$ is Jacobson, the set of closed points $s$ is dense in $X$. 
Thus $i(Z)$ is dense in $X$. 
As $i$ is a closed immersion, $i(Z)$ is closed and thus $i(Z)=X$, i.e. $i$ is surjective. 
Therefore, $i$ is a surjective closed immersion and thus an isomorphism by the fact that $X$ is reduced (cf. Proposition 2.4.2.(c) ~\cite{liu-alg-geom})). 
Since $f \circ i=g \circ i$, we can conclude that $f=g$. 
\end{proof}

\begin{lemma} 
\label{l:reduced} 
Let $K$ be a perfect field and let $X,Y$ be reduced $K$-algebraic varieties. 
Then the following hold:
\begin{enumerate}[label=(\roman*)]
\item
$X,Y$ are geometrically reduced.
\item
the fibered product $X\times_K Y$ is reduced. 
\end{enumerate}
\end{lemma}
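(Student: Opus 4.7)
The plan is to reduce both assertions to statements about finitely generated $K$-algebras and then appeal to classical facts about base change and reducedness over perfect fields. Since reducedness and geometric reducedness are local properties and the formation of $X \times_K Y$ is compatible with affine opens, it suffices to prove the following algebraic versions: (i$'$) if $A$ is a reduced finitely generated $K$-algebra and $K$ is perfect, then $A \otimes_K L$ is reduced for every field extension $L/K$; and (ii$'$) if $A$ and $B$ are two such algebras, then $A \otimes_K B$ is reduced.

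For (i$'$), I would invoke the standard criterion that a $K$-algebra $A$ is geometrically reduced over $K$ if and only if $A \otimes_K K^{1/p}$ is reduced, where $p = \mathrm{char}(K)$; in characteristic zero the claim is automatic, since any field extension of a characteristic zero field is a filtered colimit of separable extensions and base change along a separable extension preserves reducedness. In positive characteristic, perfectness of $K$ gives $K^{1/p} = K$, so $A \otimes_K K^{1/p} \cong A$ is reduced by hypothesis. I would cite, for instance, the Stacks Project (Tag 030V) or EGA IV$_2$, \S4.6, rather than reproduce the argument.

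For (ii$'$), I would first use (i$'$) to deduce that $B$ is geometrically reduced over $K$, and then establish the more general statement: if $A$ is a reduced Noetherian $K$-algebra and $B$ is a geometrically reduced $K$-algebra, then $A \otimes_K B$ is reduced. The key step is an embedding argument: since $A$ is reduced and Noetherian, its nilradical is the intersection of the finitely many minimal primes $\mathfrak{p}_1, \ldots, \mathfrak{p}_n$, and localization yields an injection $A \hookrightarrow \prod_{i=1}^{n} F_i$, where $F_i \coloneqq \mathrm{Frac}(A/\mathfrak{p}_i)$ is a field extension of $K$. Tensoring with $B$ over $K$ is exact (because $B$ is a $K$-vector space, hence flat) and commutes with finite products, so one obtains an injection $A \otimes_K B \hookrightarrow \prod_{i=1}^{n} (F_i \otimes_K B)$. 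Geometric reducedness of $B$ forces each factor $F_i \otimes_K B$ to be reduced, hence so is the product, and therefore $A \otimes_K B$ is reduced as well.

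The main obstacle, if any, is the appeal in (i$'$) to the characterization of geometric reducedness via base change to $K^{1/p}$; everything else amounts to routine formal manipulation with fibered products, minimal primes, and flat base change, and so would be handled by citation and a one-line verification.
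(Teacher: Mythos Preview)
Your proof is correct. The paper's own proof consists entirely of two citations to Proposition~5.49 in G\"ortz--Wedhorn, so there is essentially nothing to compare at the level of argument: the paper defers to a reference, while you supply the standard algebraic proof that such a reference would contain. Your reduction to the affine case, the $K^{1/p}$ criterion for geometric reducedness (trivialized by perfectness), and the embedding of a reduced Noetherian $K$-algebra into a finite product of residue fields followed by flat base change are exactly the classical ingredients. The only remark worth making is that your write-up is considerably more informative than the paper's, at the cost of length; if the goal were to match the paper's style you could simply cite G\"ortz--Wedhorn or the Stacks Project tag you mention and be done.
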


\begin{proof}
Assertion (i) is a direct consequence of Proposition 5.49.(i)-(iv) in~\cite{gorz}. 
Assertion  (ii) is Proposition~5.49.(ii) in~\cite{gorz}.  
\end{proof}

We are now in position to prove Theorem~\ref{t:inverse-also-general}. 

\begin{proof}[Proof of Theorem~\ref{t:inverse-also-general}] 
As in the proof of Theorem~\ref{t:surj-ca-general}, we can suppose that $G$ is residually finite. 
Clearly  (b) implies (a). 
For the converse, let $L$ be a field extending $\overline{K}$ such that $\tau\coloneqq \tau^{(L)}$ is reversible. 
Let $A\coloneqq X(L)$ and let $\FF$ denote  the set of all finite index subgroups of $G$. 
 Recall (cf.~Section~\ref{s:construction}) that,
 for each $H \in \FF$, 
 we have a bijective map $\rho_H \colon A^{H \backslash G} \to \text{Fix}(H)$ and a $K$-scheme morphism of algebraic varieties $\widetilde{\tau_H} \colon X^{H \backslash G} \to X^{H \backslash G}$ such that
\begin{equation}
\label{e:conjugate-b}
\widetilde{\tau_H}^{(L)}=\rho_H^{-1}\circ \tau \vert_{\text{Fix}(H)} \circ \rho_H.
\end{equation}
\par
We claim that $\widetilde{\tau_H}$ is a $K$-scheme automorphism of $X^{H \backslash G}$ for any $H \in \FF$. 
As $\tau$ is injective, $\tau\vert_{\text{Fix}(H)}$ is injective. 
Therefore, 
\[
\widetilde{\tau_H}^{(L)}\colon X^{H \backslash G}(L) \to X^{H \backslash G}(L)
\]
 is injective by \eqref{e:conjugate-b}. 
As $\overline{K} \subset L$, we have $X(\overline{K}) \subset X(L)$ and thus 
$\widetilde{\tau_H}^{(\overline{K})} \colon X^{H \backslash G}(\overline{K}) \to X^{H \backslash G}(\overline{K})$ is injective. 
Observe now that 
\[
\widetilde{\tau_H}^{(\overline{K})}= (\widetilde{\tau_H} \times \Id_{\overline{K}})^{(\overline{K})}  \colon X_{\overline{K}} ^{H \backslash G}(\overline{K}) \to X_{\overline{K}} ^{H \backslash G}(\overline{K}).
\]
We deduce that $\widetilde{\tau_H}\times \Id_{\overline{K}} \colon X_{\overline{K}}^{H \backslash G} \to X_{\overline{K}}^{H \backslash G}$ is injective by Lemma \ref{l:mor-var-closed-points}.(i). 
Since $X$ is reduced and $K$ is a perfect field, it is geometrically reduced. 
Hence $X_{\overline{K}}$ is geometrically reduced. 
Thus, $X_{\overline{K}}^{H \backslash G}$ is reduced by Lemma \ref{l:reduced}. 
As $X$ is separated, $X_{\overline{K}}^{H \backslash G}$ is also a separated $\overline{K}$-algebraic variety. 
Hence, the $\overline{K}$-scheme endomorphism $\widetilde{\tau_H}\times \Id_{\overline{K}}$ is an automorphism by Theorem \ref{t:automorphism} as we are in characteristic $0$. 
By virtue of Lemma \ref{l:flat-descent}, we see that $\widetilde{\tau_H}$ is indeed a $K$-scheme automorphism. 
\par
Since $\tau$ is reversible, $\tau^{-1}$ has a finite memory set $N'= \{g_1,\dots,g_n\}$ and we can suppose that $g_1=1_G$. 
Since $G$ is residually finite, there exists $H\in \FF$ such that the classes $Hg_1,\dots,Hg_n$ are pairwise distinct in $H \backslash G$. 
We can find $g_{n+1},\dots,g_{m}\in G$ such that $H \backslash G=\{Hg_1,\dots, Hg_n, Hg_{n+1}, \dots, Hg_{m}\}$. 
Then $N \coloneqq \{g_1,\dots,g_m\}$  is a memory set of $\tau^{-1}$ containing $N'$. 
Note that $\varphi_{H} \colon X^{H\backslash G} \to X^{N}$ (cf. Section \ref{s:construction}) is then a canonical isomorphism. 
Let $p_1\colon X^{H \backslash G} \to X$ be the canonical projection to the copy of $X$ indexed by the class $H\in H\backslash G$. 
We claim that the morphism of $K$-schemes 
\[
h \coloneqq p_1 \circ (\widetilde{\tau_H})^{-1} \circ \varphi_H^{-1} \colon X^{N} \to X
\] 
induces the local defining map of $\tau^{-1}$. 
To simplify  notation, let $\varphi\coloneqq \varphi_H^{(L)}$ and $\lambda\coloneqq \widetilde{\tau_H}^{(L)}$. 
Let $c\in A^G$. Since the classes $Hg_1,\dots,Hg_m$ are pairwise disjoint, there exists $d\in \text{Fix}(H)$ such that $c\vert_{N}=d\vert_{N}$. 
Observe that $\rho_H^{-1}(d) =\varphi^{-1}(d\vert_N) \in A^{H\backslash G}$ and that $\tau^{-1}\vert_{\text{Fix}(H)} = \rho_H \circ \lambda^{-1} \circ \rho_H^{-1}$ by \eqref{e:conjugate-b}. 
Since $N$ is a memory set of $\tau^{-1}$, we have that
\begin{align*}
& \tau^{-1}(c)(1_G) = \tau^{-1}(d)(1_G)=\tau^{-1}\vert_{\text{Fix}(H)}(d)(1_G)\\
 & =  \rho_H \circ \lambda^{-1}  \circ \rho_H^{-1}(d)(1_G)= p_1^{(L)} \circ \lambda^{-1}   \circ \rho_H^{-1}(d)\\
 &=  p_1^{(L)} \circ \lambda^{-1} \circ \varphi^{-1}(d\vert_N) =  h^{(L)}(d \vert_N) = h^{(L)}(c\vert_N).
 \end{align*}
This shows that $\tau^{-1} \colon A^G \to A^G$ is a cellular automaton whose local defining map is induced by the morphism $h$ as claimed.
\par
For $H\in \FF$, it is clear from \eqref{e:conjugate-b} and the reversibility of $\tau$ 
that over $A^{H \backslash G}$, i.e. over $L$-points, 
the $K$-morphism $(\widetilde{\tau_H})^{-1}$ must be the same as the $K$-morphism $\widetilde{(\tau^{-1})_H}$ 
constructed in Section \ref{s:construction} for the cellular automaton $\tau^{-1} \colon A^G \to A^G $ with local defining map induced by $h$. 
They are then equal as morphisms of schemes by Lemma \ref{l:equalizer} since $X^{H \backslash G}$ 
is separated and reduced by Lemma \ref{l:reduced} and that $L \supset \overline{K}$.  
In particular, they are the same over $Y$-points where $Y$ is any $K$-scheme. 
For each $K$-scheme $Y$, let $\sigma^{(Y)} \colon X(Y)^G \to X(Y)^G$ be the cellular automaton induced by the  morphism of $K$-schemes $h \colon X^{N} \to X$. 
Consider the following composition of cellular automata
\[
\tau^{(Y)} \circ \sigma^{(Y)} \colon X(Y)^G \to X(Y)^G.
\]
We have just shown that $\tau^{(Y)} \circ \sigma^{(Y)}$ and $\sigma^{(Y)} \circ \tau^{(Y)}$ are identities over $\text{Fix}(H)\subset X(Y)^G$ for each $H\in \FF$ by taking into account  \eqref{e:conjugate-b}.
Let $E \subset X(Y)^G$ denote the set of periodic configurations then $E=\bigcup_{H\in \FF}\text{Fix}(H)$. 
Since $G$ is residually finite, $E$ is dense in $X(Y)^G$. 
Therefore, $\tau^{(Y)} \circ \sigma^{(Y)}$ and $\sigma^{(Y)} \circ \tau^{(Y)}$ coincide with the identity map  on the dense subset $E \subset X(Y)^G$. 
As the prodiscrete topology on $X(Y)^G$ is Hausdorff and the maps
$\tau^{(Y)}, \sigma^{(Y)} $ are continuous since they are cellular automata, 
we concllude that  
$\tau^{(Y)} \circ \sigma^{(Y)}=\sigma^{(Y)} \circ \tau^{(Y)}=\Id$ on $X(Y)^G$.
Thus $\tau^{(Y)}$ is reversible with inverse $\sigma^{(Y)}$. 
This completes the proof of Theorem \ref{t:inverse-also-general}.
\end{proof}

\section{Counter-examples}

The following example (c.f.~\cite[Example~5.1]{cc-algebraic-ca}) 
shows that 
Theorem~\ref{t:closed-image} becomes false if we remove the hypothesis that $K$ is algebraically closed.

\begin{example} 
Let $G\coloneqq \Z$ and  $S\coloneqq\Spec(\R)$. Let $X=\Spec(\R[t])=\A^1_{\R}$ denote 
the affine line over $\R$. 
We also take  $Y_1\coloneqq \Spec(\R)$ and $Y_2\coloneqq \Spec(\C)$.
Let 
\[
A_1 \coloneqq X(\R)=\R \subset A_2 \coloneqq X(\C)=X_{\C}(\C)=\C.
\]   
For $i\in \{1,2\}$, let $\tau_i\colon A_i^G\to A_i^G$ be the cellular automaton defined over the schemes $S$, $X$, $Y_i$ with memory set  
$M \coloneqq \{0,1\}\subset G$ and local defining map $\mu_i \colon A_i^2 \to A_i$  
induced by the morphism of $S$-schemes $f\colon X^2 \to X$ given by the morphism of $\R$-algebras
\begin{align*}
\R[t] & \to \R[t_0,t_1] \\
  t & \mapsto  t_1-t^2_0. 
\end{align*}

We will show that $\tau_1(A_1^G)\subset A_1^G$ is dense for the prodiscrete topology but it is a proper subset of $A_1^{G}$. 
Consequently, the cellular automaton $\tau_1$ does not have the closed image property. 
Note that $\tau_1\colon \R^{\Z}\to \R^{\Z}$ is given by
\[
\tau_1(c)(n)=c(n+1)-c(n)^2 \quad \text{for all } c\in \R^{\Z}.
\]
Let us first show that $\tau_1(\R^{\Z})$ is  dense in $\R^{\Z}$. 
Let $d\in \R^{\Z}$ and let $F$ be any finite subset of $\Z$. 
Choose an integer $m \in \Z$ such that $F\subset [m,\infty)$. 
We define a configuration $c_F\in \R^{\Z}$ inductively by $c_F(n) \coloneqq 0$ if $n\leq m$ and $c_F(n+1) \coloneqq d(n )+c_F(n)^2$ for $n\geq m$. 
Clearly, $\tau(c_F)(n) = d(n)$ for all $n\geq m$, so that  $\tau(c_F)\vert_{F}=d\vert_{F}$. 
Hence $d$ is in the closure of $\tau_1(\R^{\Z})$.
This show that $\tau_1(\R^{\Z})$ is  dense in $\R^{\Z}$.
\par
Let us check now that the configuration $e\in \R^{\Z}$, defined by $e(n) \coloneqq 1$ for all $n\in \Z$, 
 does not belong to $\tau_1(\R^{\Z})$. 
 Suppose on the contrary that $\tau_1(c)=e$ for some $c\in \R^{\Z}$. 
This means that $c(n+1)=c(n)^2+1$ for all $n \in \Z$, 
so that  $1\leq c(n)$ and $c(n)<c(n+1)$ for all $n\in \Z$. 
Therefore, $c(n)$ has a finite limit as $n \to - \infty$.
However, this is impossible since the equation $t=t^2 + 1$ has no real roots. 
This shows that $e\notin \tau_1(\R^{\Z})$ and therefore $\tau_1(\R^{\Z})$ is not closed in $\R^{\Z}$.
\par
On the other hand, the cellular automaton $\tau_2$ has the closed image property by Theorem \ref{t:closed-image}. 
Consider the morphism of $\C$-schemes $f_{\C} \coloneqq f\times \Id_{\C} \colon X_{\C}^2 \to X_{\C}$.  
As in Remark \ref{rem:ca-base-change}, $\tau_2 \colon \C^{\Z}\to \C^{\Z}$ is also the cellular automaton over the schemes $\Spec(\C)$, $X_{\C}$, $\Spec(\C)$ with the same memory set $M$ and the local defining map induced by $f_{\C}$. 
Note that $\tau_2$ is given by the same formula
\[
\tau_2(c)(n)=c(n+1)-c(n)^2 \quad \text{for all } c\in \C^{\Z}.
\]
Hence, the same argument for $\tau_1$ shows that $\tau_2(\C^{\Z})$ is dense in $\C^{\Z}$. 
As $\tau_2(\C^{\Z})$ is also closed, $\tau_2$ is surjective. 
In fact, we check directly that $\tau$ is surjective and thus has the closed image property as follows. 
Let $d\in \C^{\Z}$. 
Define $c\in \C^{\Z}$ inductively by $c(0) \coloneqq 0$, $c(n+1)=d(n)+c(n)^2$ if $n\geq 0$ and $c(n) \coloneqq \sqrt{c(n+1)-c(n)}$ if $n\leq -1$. 
Then clearly $\tau(c)=d$. 
\par
\end{example}

The following  example shows that 
Theorem \ref{t:surj-ca} becomes false if we remove the hypothesis that $K$ is algebraically closed.

\begin{example}
Let $G$ be a group. 
Let $X \coloneqq \Proj^1_{\Q}$ denote  the projective line over $\Q$ 
(cf.~Example~2.3.34 in ~\cite{liu-alg-geom}).
Note that $X$ is a complete algebraic variety over $\Q$. 
Consider the morphism of $\Q$-schemes 
$f \colon \Proj_{\Q}^1 \to \Proj_{\Q}^1$ given by $(x \colon y) \mapsto (x^3 \colon y^3)$ for all $(x\colon y) \in \Proj^1$. 
Then $f^{(\Q)}$ is given by $(x\colon y) \mapsto (x^3\colon y^3)$ for all $(x\colon y) \in A\coloneqq \Proj_{\Q}^1(\Q)$. 
The map  $f^{(\Q)}$ is clearly  injective. 
However, it is not surjective since $(2\colon 1)\in A$ is not in the image of $f^{(\Q)}$. 
\par
Now let $\tau\colon A^G \to A^G$ denote the cellular automaton over the group $G$
and the schemes $\Spec(\Q)$, $X$, $\Spec(\Q)$,
 with memory set $M \coloneqq \{1_G\}$ and associated  local defining map $\mu \colon A \to A$ 
 induced by $f$. 
Then   $\tau$ is injective but not surjective.
\end{example}

The following example shows that the hypothesis that $X$ is of finite type cannot be removed
from Theorem~\ref{t:surj-ca}.

\begin{example}
Take $A := \N$ and let $G$ be any group.
Consider the cellular automaton $\tau \colon A^G \to A^G$ with memory set $M = \{1_G\}$
and local defining map $\mu_M \colon A^M = \N \to \N = A$ defined by $n \mapsto n + 1$.
Clearly $\tau$ is injective but not surjective.
However, as explained in the Introduction, given any field $K$ with algebraic closure $\overline{K}$,
$\tau$ can be regarded as a cellular automaton over the schemes
$S \coloneqq  \Spec(K)$, 
$X$, $Y \coloneqq \Spec(\overline{K})$,
where $X$ is the direct union of a family of copies of $\Spec(K)$ indexed by $A$.
\end{example}

In the next three examples,  $G\coloneqq \Z$ and  $K$ is a field. 
Let $S \coloneqq \Spec(K[t])$ and $X \coloneqq \Spec(K[t,u])$ 
be respectively the affine line and the affine plane over $K$. 
Consider any affine $S$-scheme $Y\coloneqq \Spec(R)$ where $R$ is a $K[t]$-algebra. 
The alphabet $A \coloneqq X_S(Y)$ can be identified with $R$. 
Indeed, 
\[
A=\Mor_{K[t]}(K[t,u],R)=R.
\]
Let $\tau\colon A^G\to A^G$ be the cellular automaton over the schemes $S,X,Y$ with memory set  
$M \coloneqq \{0,1\}\subset G$ and local defining map $\mu \colon A^2 \to A$  
induced by the morphism of $S$-schemes 
$f\colon X_S^2\to X$ given by the morphism of $K[t]$-algebras
\begin{align*}
K[t,u] = K[t][u]&\to K[t][u_0,u_1] \\
  u&\mapsto u_0 - tu_1.
\end{align*}
Clearly  $\tau(c)(n)=c(n)-tc(n+1)$ for all $c\in A^G$.
Note that $\tau$ is a group endomorphism of the additive group $A^G$.   
\par
The next example (c.f.~\cite[Example~1.10.3]{ca-and-groups-springer}) shows that a bijective cellular automaton over schemes $S,X,Y$ may fail to be reversible even if $S$ is a Noetherian Jacobson scheme, $X$ is of finite type over $S$ and $Y$ is Noetherian. 
\begin{example}
Let $R=K[[t]]$ denotes the $K$-algebra of formal power series in $t$.
Let $\sigma\colon A^G \to A^G$ be the map defined by 
\begin{equation}
\label{e:def-sigma}
\sigma(c)(n)=c(n)+tc(n+1)+t^2c(n+2)+\dots\quad \text{for all  } c\in A^G, n \in \Z.
\end{equation}
One immediately checks that  $\tau \circ \sigma =\sigma \circ \tau= \Id_{A^G}$. 
Hence, $\tau$ is bijective with inverse map $\sigma$.
However, the cellular automaton $\tau$ is not reversible. 
Indeed, suppose by contradiction that $\sigma$ is a cellular automaton
with memory set  $F \subset \Z$. 
Since $F$ is a finite set, there exists  an integer $m\geq 0$ such that $F\subset (-\infty,m]$. 
Consider the configurations  $c,d\in A^G$ such that $c(n) \coloneqq 0$ if $n\leq m$, $c(n)\coloneqq 1$ if $n>m$ and $d(n) \coloneqq 0$ for all $n\in \Z$. 
Since $c$ and $d$  coincide on $F$, we must have that $\sigma(c)(0) =\sigma(d)(0)$. 
But it follows from~\eqref{e:def-sigma} that $\sigma(c)(0)=t^{m+1}+t^{m+2}+\dots$ while 
$\sigma(d)(0)=0$. 
This  shows that $\sigma$ is not a cellular automaton and thus $\tau$ is not reversible.
\end{example}

The next example shows that in Theorem~\ref{t:surj-ca}, the surjunctivity property may fail to hold if the base scheme is not the spectrum of a field, even if 
the schemes $S,X,Y$ are all Noetherian Jacobson  schemes
and both $X$ and $Y$ are of finite type over $S$.

\begin{example}
Let $R=K[t]$. 
We claim that in this case, the automaton $\tau$ is injective but not surjective.
\par 
Indeed, suppose that $c \in (K[t])^{\Z}$ is in the kernel of $\tau$. 
This means that $c(n) = tc(n+1)$ for all $n\in \Z$.
As $c(n)$ is a polynomial in $t$ for all $n \in \Z$, this clearly implies    $c = 0$.
Therefore     $\tau$ is injective.
\par 
Now consider the configuration $d\in (K[t])^{\Z}$ such that $d(n)=1$ for all $n\in \Z$. 
Suppose that there is $c\in (K[t])^{\Z}$ such that $\tau(c)=d$. 
Then $c(n)-tc(n+1)=1$ for all $n\in \Z$. 
Hence for all $n\geq -1$, we have
\[
c(0)=1+tc(1)=1+t+t^2c(2)=...=1+t+ \dots +t^n+t^{n+1}c(n+1).
\]
As $c(n+1)\in K[t]$ for all $n\in \N$, this formula implies that $\deg(c(0))\geq n$ for all $n\in \N$, which is impossible.
Thus there is no $c\in K[t]^{\Z}$ such that $\tau(c)=d$.
This shows that  $\tau$ is not surjective.
\end{example}

\begin{example}
\label{e:surj}
Let  $\overline{K(t)}$ denote an algebraic closure of the field of fractions $K(t)$. 
Let  us take now $R \coloneqq \overline{K[t]}$,   the integral closure of $K[t]$ in $\overline{K(t)}$. 
Let us  show that, in this case also, the cellular automaton $\tau$ is injective but not surjective. 
\par
Suppose that the configuration $c\in (\overline{K[t]})^{\Z}$ is in the kernel of $\tau$. 
Then  $c(n) = tc(n+1)$ for all $n\in \Z$. 
It follows that $t^{-n} c(0)=c(n)\in \overline{K[t]}$ for all $n\in \Z$.
\par 
Let us establish the following: 
\begin{equation*}
\text{if } u\in \overline{K[t]}  \text{ satisfies } 
 t^{-n}u \in \overline{K[t]} \text{ for all } n\geq 0
 \text{ then } u = 0.
 \tag{$*$}   
\end{equation*}
Applying $(*)$  for $u=c(0)$, this will give us   that $c(0) = 0$ and hence $c(n)=t^{-n}c(0)=0$ for all $n\in \Z$, 
showing that $c = 0$ and hence that  $\tau$ is injective.
\par
To prove $(\star)$, 
suppose on the contrary that $u$ is a non-zero element of $ \overline{K[t]}$ that 
satisfies  $t^{-n}u \in \overline{K[t]} \text{ for all } n\geq 0$.
Since $u$ is integral over $K[t]$ and $u \not= 0$, there exist an integer $m\geq 1$ and $f_0,\dots,f_{m-1}\in K[t]$ such that
$f_0 \not= 0$ and 
\begin{equation}
\label{e:integral}
u^m+f_{m-1}u^{m-1}+\dots+ f_1u+f_0=0.
\end{equation}
Write $f_0=a_kt^k+\dots+a_0$ with $a_i \in K$ for $0 \leq i \leq k$.   
As $t^{-n}u \in \overline{K[t]}$ for all $n\geq 0$, 
we have that $f_iu^i t^{-n}\in \overline{K[t]}$ for all $n\geq 0$ and $i\ge 1$. 
Hence, by dividing both sides of ~\eqref{e:integral} by $t^n$, we see that for all $n\geq 0$,
\[
f_0t^{-n}=-(u^mt^{-n}+\dots+f_1ut^{-n})\in \overline{K[t]}.
\] 
Let $ s\in \{0,\dots, k\}$ be the minimal integer such that $a_{s}\in K^*$. 
Taking $n \coloneqq s+1>0$, we see that $a_st^{-1}\in  \overline{K[t]}$ since $a_kt^{k-n}+\dots+a_lt^{l-n}=f_0t^{-n}\in \overline{K[t]}$ and $a_st^{s-n}\in K[t]$ for all $s\geq n$. 
Thus, there exist an integer $p\geq 1$ and $h_0,\dots,h_{p-1}\in K[t]$ such that
\[
(a_st^{-1})^p+h_{p-1}(a_st^{-1})^{p-1}+\dots+h_0=0.
\]
Hence $a_s=-t(h_{p-1}a_s+\dots+h_0t^p)$, which is impossible since $a_s\in K^*$ and $h_{p-1}a_s+\dots+h_0t^p\in K[t]$. This shows that $u=0$. 
\par
Now consider the configuration $d\in (\overline{K[t]})^{\Z}$ such that $d(n)=1$ for all $n\in \Z$. 
Suppose that there is $c\in (\overline{K[t]})^{\Z}$ such that $\tau(c)=d$. 
Then  $c(n)-tc(n+1)=1$ for all $n\in \Z$. 
As in Example \ref{e:surj}, we have the following formula for all $n\geq 0$
\[
c(0)=1+t+ \dots +t^{n-1}+t^nc(n)=\frac{t^n-1}{t-1}+t^nc(n). 
\]
Therefore, we obtain for all $n\geq 0$
\[
(1+(t-1)c(0))t^{-n}=(t-1)c(n)+1\in   \overline{K[t]}.
\]
 As $1+(t-1)c(0)\in \overline{K[t]}$, we deduce from the general property $(*)$ that $1+(t-1)c(0)=0$, hence $c(0)=(1-t)^{-1}$. Observe that since $K[t]=K[1-t]\subset K(t)\subset \overline{K(t)}$, we have $\overline{K[t]}=\overline{K[1-t]}$. Hence $(*)$ is equivalent to 
 \begin{align*}
  \text{ if } u\in \overline{K[t]} \text{ and } (1-t)^{-n}u\in \overline{K[t]} \text{ for all } n\geq 0  \text{ then } u=0.
  \tag{$**$}  
\end{align*}
Since $\overline{K[t]}$ is a ring and $(1-t)^{-1}\neq 0$, we deduce from  $(**)$  that $c(0)=(1-t)^{-1}\notin \overline{K[t]}$. This contradiction shows that there is no $c\in (\overline{K[t]})^{\Z}$ such that $\tau(c)=d$. Therefore  $\tau$ is not surjective.
\end{example}

The following example shows that we can remove neither the hypothesis that $K$ is of characteristic $0$ nor the hypothesis that $K$ is algebraically closed
in Theorem~\ref{t:inverse-also}. 

\begin{example}
Let $G$ be a group and let $p$ be an odd prime number. 
Let $\overline{\F_p}$ be an algebraic closure of the finite field $\F_p = \Z/p\Z$. 
In this example, we take  $K=\R$ or $K=\overline{\F}_p$. 
Let $S=\Spec(K)$ and let $X = \Proj^1_{K}$ be the projective line over $K$ (cf.~Example~2.3.34 in ~\cite{liu-alg-geom}).   
Note that $X$ is a separated and reduced $K$-algebraic variety. 
Consider the morphism of $K$-schemes 
$f \colon \Proj^1_{K} \to \Proj^1_{K}$ given by $(x \colon y) \mapsto (x^p \colon y^p)$. 
Then $f^{(K)}$ is given by $(x\colon y) \mapsto (x^p\colon y^p)$ for all $(x\colon y) \in A\coloneqq \Proj_{K}^1(K)$ with $x, y \in K$. 
Since $K = \R$ or $K=\overline{\F_p}$ with $p$ odd, each $x\in K$ has a unique $p$-th root $\sqrt[p]{x}\in K$. 
We deduce that the map $f^{(K)}$ is bijective. 
\par
Let $\tau \colon A^G \to A^G$ denote the cellular automaton over $G$
and the schemes $S$, $X$, $S$,
 with memory set $M \coloneqq \{1_G\}$ and associated  local defining map $\mu \colon A \to A$ 
 induced by $f$. 
Then $\tau$ is reversible. 
The minimal memory set of the inverse $\tau^{-1}$ is $M=\{1_G\}$ and the associated local defining map $\nu \colon A \to A$ is given by $(x\colon y)\mapsto (\sqrt[p]{x} \colon \sqrt[p]{y})$ for each $(x\colon y) \in A$ with $x,y \in K$. 
\par
Suppose that $\tau^{-1}\in \CA(G,S,X,S)$. 
Then by definition, there exist a finite subset $N$ of $G$ and a morphism of $K$-schemes $h \colon X^N \to X$ such that $h^{(K)}$ is the local defining map associated with $N$. 
As $(1\colon 0) \in X(K)$ and $M=\{1_G\}$ is a memory set of $\tau^{-1}$, Proposition \ref{p:independent-of-memory} says that $\nu \colon A \to A$ is induced by a morphism of $K$-schemes $h' \colon X \to X$
and thus $h'^{(K)}=\nu$. 
Note that there exist nonzero homogeneous polynomials of the same degree $u,v \in K[x,y]$ such that $h'\colon \Proj^1_{K} \to \Proj^1_{K}$ is given by $(x\colon y)\mapsto (u(x,y) \colon v(x,y))$. 
Let $a(t)=u(t,1)$ and $b(t)=v(t,1)$ be nonzero polynomials in $K[t]$. 
Consider $w(t)=a(t)/b(t)\in K(t)$ then we have $h'((t\colon 1))=(w(t)\colon 1)$ for all $t\in K$ which are not zeros of $b(t)$. 
As $h'((t\colon 1))=\nu((t\colon 1))=(\sqrt[p]{t} \colon 1)$, we deduce that $\sqrt[p]{t}=w(t)$ for almost all $t\in K$. 
Therefore $a(t)=b(t)\sqrt[p]{t}$ and thus $a(t)^p=tb(t)^p$ for almost all $t\in K$. 
As $K$ is infinite, we deduce that $a(t)^p=tb(t)^p$ as polynomials in $K[t]$. 
Hence, $p\text{deg}(a)=1+p\text{deg}(b)$, which is a contradiction since $\text{deg}(a), \text{deg}(b)\in \N$ as $a,b$ are nonzero. 
We conclude that $\tau^{-1} \notin \CA(G,S,X,S)$, i.e., the inverse cellular automaton of $\tau$ is not defined over the schemes $S$, $X$, $S$. 
\end{example}

\appendix
\section{Schemes and algebraic varieties}
\label{sec:schemes}

In this section, we have collected the basic facts on schemes and algebraic varieties that are needed in the present paper. For more details, the reader is referred to 
\cite{gorz},
\cite{grothendieck-ega-1},
\cite{grothendieck-20-1964}, 
\cite{ega-4-3}, \cite{harris}, \cite{hartshorne}, 
\cite{liu-alg-geom}, \cite{milne}, and \cite{vakil}.
Some proofs are given in the case we have been unable to find a precise reference in the literature.
\par 
All rings are commutative with $1$ and
all ring morphisms are asked to send $1$ to $1$.

\subsection{Topological background}
Let $X$ be a topological space. If $Y$ is a subset of $X$, we denote by $\overline{Y}$ the closure of $Y$ in $X$.
\par
One says that  $X$ is  \emph{quasi-compact} if every open cover of $X$ admits a finite open subcover. Every closed subset of a quasi-compact topological space is itself quasi-compact for the induced topology.
\par
One says that  $X$ is  $T_0$, or \emph{Kolmogorov}, if, given any two distinct points of $X$, there is an open subset of $X$ that contains one of the two points but not the other.
This amounts to saying that, given any two distinct points $x$ and $y$ of $X$,
one has $x \notin \overline{\{y\}}$ or $y \notin \overline{\{x\}}$. 
\par
One says that a point $x\in X$ \emph{specializes} to a point $y$ of $X$ if $y \in \overline{\{x\}}$. 
A point $\xi \in X$ is called a \emph{generic point}  of $X$ if $\xi$ is the only point of $X$ that specializes to $\xi$.
\par
One says that $X$ is \emph{irreducible} if every non-empty open subset of $X$ is dense in $X$.
\par
A point $x \in X$ is called a \emph{closed point} of $X$  if the singleton $\{x\}$ is a closed subset of  $X$.  Note that, by Zorn's lemma,  every non-empty quasi-compact topological space admits a minimal non-empty closed (and hence quasi-compact) subset. It follows that any non-empty quasi-compact $T_0$ topological space
admits a closed point.
\par
 A subset of $X$  is said to be \emph{locally closed} if it is the intersection of an open subset and a closed subset of $X$.
 A subset  of  $X$ is said to be  \emph{constructible} if it is a finite union of locally closed subsets of $X$.

\begin{lemma}
 \label{l:const-open-dense-clo}
 Let $C$ be a constructible subset of a topological space $X$.
  Then there is an open dense subset $U$ of $\overline{C}$ such that $U \subset C$.
\end{lemma}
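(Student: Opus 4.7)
The plan is to induct on the number $n$ of locally closed pieces in a presentation $C = L_1 \cup \cdots \cup L_n$, where each $L_i = U_i \cap F_i$ with $U_i$ open and $F_i$ closed in $X$. For the base case $n = 1$, since $L_1 \subset F_1$ gives $\overline{C} \subset F_1$, the set $\overline{C} \cap U_1$ sits inside $F_1 \cap U_1 = C$, is open in $\overline{C}$, and contains $C$; since $C$ is by definition dense in $\overline{C}$, this open subset is dense too.

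For the inductive step, I would split $C = C_1 \cup C_2$ with $C_1 := L_n$ and $C_2 := L_1 \cup \cdots \cup L_{n-1}$ (so $C_2$ has fewer locally closed pieces). Applying the base case to $C_1$ and the inductive hypothesis to $C_2$ produces open subsets $V_1, V_2 \subset X$ such that, for each $i$, the set $V_i \cap \overline{C_i}$ is open dense in $\overline{C_i}$ and contained in $C_i$. I would then define
\[
U := \overline{C} \cap \bigl( (X \setminus \overline{C_1}) \cup V_1 \bigr) \cap \bigl( (X \setminus \overline{C_2}) \cup V_2 \bigr),
\]
which is clearly open in $\overline{C}$ as the intersection of $\overline{C}$ with two open subsets of $X$. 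Any $x \in U$ lies in some $\overline{C_i}$, because $\overline{C} = \overline{C_1} \cup \overline{C_2}$, and the $i$-th conditional clause then forces $x \in V_i$, so $x \in V_i \cap \overline{C_i} \subset C_i \subset C$; thus $U \subset C$.

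The main obstacle is verifying density of $U$ in $\overline{C}$, which requires a two-layer case analysis. Given a nonempty open $\Omega \subset \overline{C}$, it meets some $\overline{C_i}$, say $\overline{C_1}$; by density of $V_1 \cap \overline{C_1}$ in $\overline{C_1}$ I extract a point $x \in \Omega \cap V_1 \cap \overline{C_1}$. If $x \notin \overline{C_2}$, then $x$ already satisfies both conditional clauses and lies in $U$. The delicate case is when $x$ lies in the overlap $\overline{C_1} \cap \overline{C_2}$: here $\Omega \cap V_1$ is a nonempty open subset of $\overline{C}$ meeting $\overline{C_2}$, so it is nonempty open in $\overline{C_2}$, and a second application of density (this time of $V_2 \cap \overline{C_2}$ in $\overline{C_2}$) yields a point in $\Omega \cap V_1 \cap V_2 \cap \overline{C_2} \subset U$. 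The key design choice that makes this work is the inclusion of the \emph{joint} condition $V_1 \cap V_2$ over the overlap $\overline{C_1} \cap \overline{C_2}$; a naive union of the two individual dense open sets need not be open in $\overline{C}$ nor dense at overlap points, and the conditional-clause formulation above packages both phenomena into a single open set.
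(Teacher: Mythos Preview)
Your proof is correct. The induction on the number of locally closed pieces is sound; the base case is immediate, and your conditional-clause construction
\[
U = \overline{C} \cap \bigl( (X \setminus \overline{C_1}) \cup V_1 \bigr) \cap \bigl( (X \setminus \overline{C_2}) \cup V_2 \bigr)
\]
does exactly what is needed, with the two-layer density argument handling the overlap region correctly. One small point: your ``say $\overline{C_1}$'' is fine even though $C_1$ and $C_2$ play asymmetric roles in the induction, because the density argument only uses the common property that $V_i \cap \overline{C_i}$ is open dense in $\overline{C_i}$ and contained in $C_i$, which you have for both indices.

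As for comparison with the paper: the paper does not give a self-contained proof of this lemma at all, it simply cites \cite[Lemma~2.1]{an-rigid}. So your argument supplies what the paper omits. Your approach is a standard one for this statement; there is no substantive divergence to discuss.
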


 \begin{proof}
 See~\cite[Lemma~2.1]{an-rigid}.
 \end{proof}

A subset $Y \subset X$ is said to be \emph{very dense} in $X$ if
$F \cap Y$ is dense in $F$  for every closed subset $F$ of $X$.
Note that if $Y$ is a very dense subset of $X$ then $Y$ must contain all the closed points of $X$.
One says that  $X$ is  \emph{Jacobson} if the closed points of $X$ are very dense
in $X$.
Note that $X$ is Jacobson if and only if every non-empty constructible subset of $X$ contains a closed point of $X$.
\par
The following result is a particular case of Proposition~10.3.2
 in~\cite{ega-4-3}. 

\begin{lemma}
\label{l:constructible-in-Jacobson}
Let $X$ be a Jacobson space and let $C$ be a constructible subset of $X$.
Then $C$ is a Jacobson space for the topology induced by $X$.
Moreover, 
a point of $C$ is closed in $C$ for the induced topology if and only if it is closed in $X$.
\end{lemma}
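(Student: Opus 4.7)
The plan is to prove the ``moreover'' clause first and then bootstrap it to the Jacobson property of $C$. The single ingredient from the hypothesis that will do all the work is that the closed points of $X$ form a very dense subset, i.e.\ are dense in every closed subset of $X$. Throughout I will fix a decomposition $C = L_1 \cup \cdots \cup L_n$ into locally closed subsets $L_i = U_i \cap F_i$ of $X$ provided by the definition of a constructible set.

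For the ``moreover'' clause, the implication ``$c$ closed in $X$ $\Rightarrow$ $c$ closed in $C$'' is immediate from $\{c\} = \{c\} \cap C$. For the converse, suppose $c \in C$ is closed in $C$ and pick an index $i$ with $c \in L_i$. The irreducible closed set $\overline{\{c\}}$ (closure taken in $X$) meets the open set $U_i$, so $U_i \cap \overline{\{c\}}$ is a non-empty open subset of $\overline{\{c\}}$. Applying very-density to the closed subset $\overline{\{c\}} \subset X$ produces a closed point $x$ of $X$ lying in $U_i \cap \overline{\{c\}}$. Since $c \in F_i$ and $F_i$ is closed in $X$, one has $\overline{\{c\}} \subset F_i$, whence $x \in U_i \cap F_i = L_i \subset C$. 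The subspace-topology identity gives $\overline{\{c\}} \cap C = \{c\}$ (this is where closedness of $c$ in $C$ is used), forcing $x = c$. Thus $c$ is closed in $X$.

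For the main assertion, I would verify the equivalent characterization that every non-empty locally closed subset $D \subset C$ contains a point closed in $C$. Writing $D = V' \cap G' \cap C$ with $V'$ open and $G'$ closed in $X$, the fixed decomposition of $C$ expresses $D$ as the finite union $\bigcup_i (V' \cap G' \cap L_i)$ of locally closed subsets of $X$. Since $D$ is non-empty, at least one summand is a non-empty locally closed, hence constructible, subset of $X$; the Jacobson hypothesis on $X$ then yields a closed point $x$ of $X$ inside $D$, and by the ``moreover'' already established, $x$ is also closed in $C$.

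I do not expect any serious obstacle: the argument is essentially a bookkeeping exercise that transports the Jacobson property of $X$ along the finite locally-closed decomposition of $C$. The one delicate step is the identification of $C$-closed points with $X$-closed points contained in $C$, which is exactly the reason to prove the ``moreover'' first; the key geometric input there is the ability, inside an irreducible closed set $\overline{\{c\}}$, to realise a closed point of $X$ in any prescribed non-empty open slice, and this is supplied directly by very-density.
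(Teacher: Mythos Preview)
Your proof is correct. The core combinatorial step --- decomposing a locally closed subset of $C$ into finitely many $X$-locally-closed pieces via the given decomposition $C=\bigcup_i L_i$ and then invoking the Jacobson property of $X$ on a nonempty piece --- is exactly what the paper does as well.

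The only real difference is the order in which the two conclusions are obtained. The paper first shows directly that $X_0\cap C$ is very dense in $C$ (your step~2, in effect), and then gets the ``moreover'' clause for free from the general observation that any very dense subset must contain all closed points: taking $F=\{c\}$ for $c\in C_0$ forces $c\in X_0\cap C$. You instead prove the ``moreover'' first by an ad hoc closure-chasing argument inside $\overline{\{c\}}$, and then use it to upgrade ``$X$-closed point in $D$'' to ``$C$-closed point in $D$''. Both orderings are fine; the paper's route is slightly more economical since it avoids the separate argument for $C_0\subset X_0$, while yours has the virtue of making the identification $C_0=X_0\cap C$ explicit and independent of the Jacobson conclusion for $C$. (Incidentally, the irreducibility of $\overline{\{c\}}$ that you mention is not actually used in your argument --- you only need that $U_i\cap\overline{\{c\}}$ is a nonempty open subset of the closed set $\overline{\{c\}}$.)
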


\begin{proof}
For  $Y \subset X$, let us denote by $Y_0$ the set of closed points of $Y$ (for the topology on $Y$ induced by $X$).
Observe that we always have the inclusion 
\begin{equation}
\label{e:closed-points-sub}
X_0 \cap Y \subset Y_0.
\end{equation}
Let $F$ be a closed subset of $C$ and let 
$U$ be an open subset of $C$ such that $F \cap U \not= \varnothing$.
To prove that $C$ is Jacobson, it suffices to show that $C_0$ meets 
$F \cap U$.
Since $C$ is constructible,
we can write $C$ as a finite union 
$C = \bigcup_{i \in I} G_i \cap V_i$, where 
 $G_i \subset X$ (resp.~$V_i \subset X$)  are  closed 
(resp.~open) subsets of $X$.
On the other hand, we have that
$F = C \cap H$ and $U = C \cap W$,
where $H \subset X$ (resp.~$W \subset X$) is a closed (resp.~open) subset of  $X$.
We then have
\[
F \cap U = \bigcup_{i \in I} H \cap G_i \cap V_i \cap W.
\]
As $F \cap U \not= \varnothing$, there exists $j \in I$ such  that 
$H \cap G_j \cap V_j \cap W \not= \varnothing$.
Now, $X_0 \cap H \cap G_j$ is dense in the closed subset $H \cap G_j \subset X$
since $X_0$ is very dense in $X$ by our hypothesis. 
As $V_j \cap W$ is open in $X$,
we deduce that $X_0$  meets $H \cap G_j \cap V_j \cap W$.
Therefore $X_0 \cap C$ is very dense in $C$.
This implies in particular that  $C_0 \subset X_0 \cap C$. 
Since $X_0 \cap C \subset C_0$ by~\eqref{e:closed-points-sub},
we conclude that $X_0 \cap C = C_0$ and that $C_0$ is very dense in $C$.
Note that the fact that the index set $I$ is finite is not used in this proof.  
\end{proof}

\begin{lemma}
\label{l:closure-point-Jacob}
Let $X$ be a Jacobson topological space and let $x$ be a non-closed point of $X$. 
Then  $\overline{\{x\}}$ is irreducible and 
 contains infinitely many closed points of $X$. 
\end{lemma}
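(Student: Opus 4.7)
The plan is to split the statement into its two assertions and dispatch each with a short topological argument, relying only on general facts and Lemma~\ref{l:constructible-in-Jacobson}.

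First I would establish irreducibility of $\overline{\{x\}}$, which is completely general and does not even require the Jacobson hypothesis. If $\overline{\{x\}} = F_1 \cup F_2$ with $F_1, F_2$ closed in $\overline{\{x\}}$ (hence in $X$, since $\overline{\{x\}}$ is itself closed in $X$), then $x$ must lie in some $F_i$, say $F_1$; but $\{x\} \subset F_1$ and $F_1$ closed in $X$ force $\overline{\{x\}} \subset F_1$, so $F_1 = \overline{\{x\}}$. Hence $\overline{\{x\}}$ cannot be written as a proper union of two closed subsets.

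For the second assertion, I would apply Lemma~\ref{l:constructible-in-Jacobson} to the closed, hence constructible, subset $\overline{\{x\}}$ of $X$. This gives two things at once: $\overline{\{x\}}$ is itself a Jacobson space for the induced topology, and its closed points are exactly the closed points of $X$ lying in $\overline{\{x\}}$. Now suppose for contradiction that only finitely many closed points $y_1,\dots,y_n$ of $X$ belong to $\overline{\{x\}}$. Then $\{y_1,\dots,y_n\}$ is a finite union of points closed in $\overline{\{x\}}$, so it is itself a closed subset of $\overline{\{x\}}$. By the Jacobson property of $\overline{\{x\}}$, the set of its closed points is dense in $\overline{\{x\}}$, so $\{y_1,\dots,y_n\}$ is both closed and dense, whence $\overline{\{x\}} = \{y_1,\dots,y_n\}$. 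In particular $x = y_i$ for some $i$, making $x$ closed in $\overline{\{x\}}$ and therefore in $X$, contradicting the hypothesis.

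The main obstacle, though quite modest, is ensuring that the Jacobson property passes from $X$ to the closed subspace $\overline{\{x\}}$ with the expected correspondence of closed points; this is exactly what Lemma~\ref{l:constructible-in-Jacobson} delivers. Once this transfer is in hand, the argument reduces to the elementary observation that a closed subset which is also dense must be the whole space, combined with the contrapositive interpretation of the non-closedness of $x$.
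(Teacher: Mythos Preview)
Your proof is correct and follows essentially the same contradiction strategy as the paper: assume finitely many closed points in $\overline{\{x\}}$, observe this finite set is both closed and dense in $\overline{\{x\}}$, hence equals $\overline{\{x\}}$, forcing $x$ to be closed. The only minor difference is that you route the density claim through Lemma~\ref{l:constructible-in-Jacobson}, whereas the paper appeals directly to the definition of Jacobson (the closed points of $X$ are very dense, so $X_0 \cap \overline{\{x\}}$ is automatically dense in the closed set $\overline{\{x\}}$); invoking the lemma is harmless but not needed here.
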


\begin{proof}
Since $\{x\}$ is irreducible
and the closure of any irreducible subset of a topological space is itself irreducible,
it follows that $\overline{\{x\}}$ is irreducible.
\par
To prove that $\overline{\{x\}}$ contains infinitely many closed points of $X$, we proceed by contradiction.
Let $X_0$ denote the set  of closed points of $X$ and suppose that 
$F \coloneqq \overline{\{x\}} \cap X_0$ is finite.
Observe that $F$ is a  closed discrete subset of  $X$.
On the other hand $F$ is dense in $\overline{\{x\}}$ since $X$ is Jacobson. 
Thus $\overline{\{x\}} = F$,
which contradicts the fact that the point $x$ is not closed in $X$.
 \end{proof}

\begin{lemma}
\label{l:f-inj-closed-points}
Let $f \colon X \to Y$  be a continuous map between topological spaces.
Let $X_0$ (resp.~$Y_0$) denote the set of closed points of $X$ (resp.~$Y$).
Suppose that $X$ is Jacobson and that the restriction of $f$ to $X_0$ is injective.
Then one has $f^{-1}(Y_0)\subset X_0$.
\end{lemma}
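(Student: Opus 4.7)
My plan is to argue by contradiction, exploiting the fact that a non-closed point in a Jacobson space has a closure rich in closed points, all of which would be forced to collide under $f$.

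Suppose towards a contradiction that there exists $x \in f^{-1}(Y_0)$ with $x \notin X_0$. Set $y \coloneqq f(x) \in Y_0$, so $\{y\}$ is closed in $Y$. By continuity of $f$, the preimage $f^{-1}(\{y\})$ is a closed subset of $X$ containing $x$, and hence contains $\overline{\{x\}}$. Therefore every point in $\overline{\{x\}}$ is mapped by $f$ to the single point $y$.

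Since $X$ is Jacobson and $x$ is not closed, Lemma \ref{l:closure-point-Jacob} applies and yields that $\overline{\{x\}}$ contains infinitely many closed points of $X$ (in particular, at least two). Pick two distinct closed points $x_1, x_2 \in \overline{\{x\}} \cap X_0$. Then $f(x_1) = f(x_2) = y$, which contradicts the hypothesis that $f\vert_{X_0}$ is injective. Hence no such $x$ exists, proving $f^{-1}(Y_0) \subset X_0$.

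The argument is essentially immediate once Lemma \ref{l:closure-point-Jacob} is in hand, so there is no real obstacle; the only subtlety is the clean use of continuity to push the closure of $\{x\}$ into the closure of $\{y\} = \{f(x)\}$, which collapses to a single point precisely because $y$ is closed in $Y$.
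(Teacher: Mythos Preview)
Your proof is correct and essentially identical to the paper's: both argue by contradiction, invoke Lemma~\ref{l:closure-point-Jacob} to find two distinct closed points in $\overline{\{x\}}$, and use continuity to conclude they both map to $y$, violating injectivity on $X_0$. The only cosmetic difference is that you pull back the closed set $\{y\}$ to trap $\overline{\{x\}}$, while the paper pushes forward via $f(\overline{\{x\}}) \subset \overline{\{f(x)\}} = \{y\}$; these are equivalent formulations of the same continuity argument.
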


\begin{proof}
We proceed by contradiction.
Suppose that there exist $x\in X \setminus X_0$ and $y \in Y_0$ such that $f(x)= y$.
As the point $x$ is not closed,
we deduce from Lemma~\ref{l:closure-point-Jacob} 
that there are distinct closed points $x',x''\in \overline{\{x\}}$.
We then have  $f(x')=f(x'')=y$ since $f(\overline{\{x\}})\subset \overline{f(x)}=\{y\}$. 
This contradicts the injectivity of the restriction of $f$ to $X_0$. 
\end{proof}

Let $f \colon X \to Y$ be a continuous map between topological spaces.
One says that $f$ is \emph{quasi-compact}
if the inverse image of every quasi-compact open subset of $Y$ is a quasi-compact subset of $X$.
One says that $f$ is \emph{dominant} if the image of $f$ is a dense subset of $Y$.
One says that $f$ has the \emph{closed image property} if the image of $f$ is closed in $Y$.
Note that $f$ is surjective if and only if it is dominant and has the closed image property.

\subsection{Schemes}
A \emph{ringed space} is a pair $(X,\OO_X)$, where $X$ is a topological space and $\OO_X$ is a sheaf of rings on $X$.
We will sometimes write $X$ instead of $(X,\OO_X)$. 
It will be clear from the context whether we are considering the scheme itself or its underlying topological space.
A ringed space $(X,\OO_X)$ is called a \emph{locally ringed space} if the stalk  $\OO_{X,x}$ of $\OO_X$ at each point $x \in X$ is a local ring.
If $(X,\OO_X)$ is a locally ringed space and $x \in X$, we shall denote by $\kappa_X(x)$ or simply 
$\kappa(x)$ the residue field of the local ring $\OO_{X,x}$. 
\par
The \emph{spectrum} of a ring $R$ is the locally ringed space $\Spec(R) = (\Spec(R),\OO_{\Spec(R)})$ defined as follows.
As a set, $\Spec(R)$ is the set of all prime ideals of $R$. 
The set   $\Spec(R)$ is equipped with the  \emph{Zariski topology}, i.e., the topology
whose closed sets are the sets $V(I)$ defined by
$V(I) \coloneqq \{\PP \in \Spec(R) : I \subset \PP\}$, where $I$ runs over all ideals of $R$.
The Zariski topology is quasi-compact. 
A base of open sets for the Zariski topology on $\Spec(R)$ is given by the sets
$D(f) \coloneqq \{\PP \in \Spec(R) : f \notin \PP \}$, where $f$ runs over $R$.
The structural sheaf $\OO_{\Spec(R)}$
is uniquely determined by the condition that  $\OO_{\Spec(R)}(D(f)) = R_f$ for all $f \in R$,
where $R_f$ denotes the localization of  $R$ by the powers of $f$.
\par
A locally ringed space is called an \emph{affine scheme} if it is isomorphic, as a locally ringed space, to the spectrum of some ring.
\par
An open subset $U \subset X$ of  a locally  ringed space $X = (X,\OO_X)$ is called \emph{affine}
if  the locally ringed space $(U,\OO_X\vert_U)$, obtained by restricting $\OO_X$ to $U$,  is an affine scheme.
A locally  ringed space $X$ is called a \emph{scheme}
if each point of $X$ admits an affine open neighborhood.
\par
The topological space underlying any scheme is $T_0$.
For the proof, we can restrict to affine schemes   since the property of being $T_0$ is clearly local.
Now let us assume that $X = \Spec(R)$ for some ring $R$.
If $\PP_1, \PP_2 \in X$ are distinct prime ideals of $R$, then, after possibly  exchanging $\PP_1$ and $\PP_2$, there exists $f \in R$ such that $f \notin \PP_1$ and $f \in \PP_2$.
We then have $\PP_1 \in \DD(f)$ and $\PP_2 \notin \DD(f)$.
As $\DD(f)$ is an open subset of $X$, this shows that $X$ is $T_0$.
\par
A scheme is said to be \emph{discrete} (resp.~\emph{quasi-compact}, resp.~\emph{irreducible}, resp.~\emph{Jacobson}) if its underlying 
topological space is discrete (resp.~quasi-compact, resp.~irreducible, resp.~Jacobson).

\begin{remark}
\label{rem:qc-subsets}
Observe that a scheme is quasi-compact if and only if it is a finite union of affine open subsets.
As $\Spec(R)$ is quasi-compact for any ring $R$, 
we deduce  that every subset of the  underlying topological space of a quasi-compact scheme is quasi-compact for the induced topology. 
\end{remark}

A scheme $X$ is called \emph{Noetherian} if the space $X$ admits a finite affine open cover 
$(U_i)_{i \in I}$ such that, for each $i \in I$,
one has $U_i = \Spec(R_i)$,
where $R_i$ is a Noetherian ring.
Every Noetherian scheme is quasi-compact.
\par 
A ring $R$ is called \emph{reduced} if it admits no non-zero  nilpotent elements. 
A scheme $X$ is said to be \emph{reduced} if  the local ring $\OO_{X,x}$ is reduced
for every $x \in X$.

\subsection{Scheme morphisms}
A \emph{scheme morphism} from a scheme $X$ to a scheme $Y$ is a locally ringed space morphism  from $(X,\OO_X)$ to $(Y,\OO_Y)$.
The \emph{category of schemes} is the category $\Sch$ whose objects are schemes and whose morphisms are scheme morphisms.
\par
If $X = \Spec(A)$ and $Y = \Spec(B)$ are affine schemes, then there is a canonical bijection from the set of scheme morphisms $X \to Y$  to the set of ring morphisms $B \to A$.
\par
Given a  scheme $S$, a scheme \emph{over} $S$, or an $S$-\emph{scheme}, is a scheme $X$ endowed with a scheme morphism 
$\pi \colon X \to S$. 
The morphism $\pi$ is called the \emph{structure morphism} of the $S$-scheme $X$. 
If $X$ and $Y$ are $S$-schemes with respective structure morphisms $\pi \colon X \to S$ and $\pi' \colon Y \to S$, a $S$-scheme morphism from $X$ to $Y$ is a scheme morphism
$f \colon X \to Y$ such that $\pi = f \circ \pi'$ in $\Sch$.
The \emph{category of $S$-schemes} is the category $\Sch_S$ whose objects are $S$-schemes and morphisms are $S$-scheme morphisms.
This category admits finite products
(see e.g.~\cite[Section~3.1.1]{liu-alg-geom}).
The product in $\Sch_S$ of a finite family of $S$-schemes is also called their 
$S$-\emph{fibered product}.
The $S$-fibered product of two $S$-schemes $X$ and $Y$ is denoted by $X \times_S Y$ or simply 
$X \times Y$ if there is no risk of confusion.
Similarly, if $X$ is an $S$-scheme and $E$ is a finite set,
the $S$-fibered product of a finite family of copies of $X$ idexed by $E$ is  simply denoted by  $X^E$. 

\begin{remark}
\label{rem:X-E-X-F}
Let $X$ be an $S$-scheme and let $E,F$ be finite sets.
Every map $\rho \colon E \to F$ induces an $S$-scheme morphism
$\rho^* \colon X^F \to X^E$ defined as follows.
Consider, for each $e \in E$, the projection $S$-scheme morphism $p_e \colon X^F \to X$ associated with the  index $\rho(e) \in F$.
Then $\rho^* \colon X^F \to X^E$
is the  $S$-scheme morphism associated, by the universal property of $S$-fibered products,  with the family of $S$-scheme morphisms
$p_e \colon X^F \to X$, $e \in E$.   
\end{remark}
 
If $X$ and $Y$ are two $S$-schemes,
 the set of $Y$-\emph{points} of $X$ is the set
$X(Y) \coloneqq \Mor_{\Sch_S}(Y,X)$ consisting of all $S$-scheme morphisms from $Y$ to $X$.

\begin{remark}
\label{rem:Z-points-fibered product}
Let $S$ be a scheme and let $X,Y,Z$ be $S$-schemes.
It is an immediate consequence of the universal property of  $S$-fibered products that there is a canonical set-theoretic identification
$(X \times_S Y)(Z) = X(Z) \times Y(Z)$.
It follows that if $E$ is a finite set, then there is a canonical identification
$X^E(Y) = (X(Y))^E$, where $X^M$ denotes as above the $S$-fibered product of a family of copies of $X$ indexed by $M$. 
\end{remark}

\begin{remark}
\label{rem:f-Y-points}
Let $S$ be a scheme and let $X$,$Y$,$Z$ be $S$-schemes.
Suppose that $f \colon X \to Z$ is an $S$-scheme morphism.
Then $f$ induces a map
$f^{(Y)} \colon X(Y) \to Z(Y)$ defined by $f^{(Y)}(\varphi) = f \circ \varphi$ for all $\varphi \in X(Y)$. 
\end{remark}

If $R$ is a ring, it is a common abuse to write $R$ instead of $\Spec(R)$.
In particular, a $\Spec(R)$-scheme is also called an $R$-scheme and we write $\Sch_R$ instead 
of $\Sch_{\Spec(R)}$ and $X \times_R Y$ instead of $X \times _{\Spec(R)} Y$.

\subsection{Base changes}
Let $S$ be a scheme and $T$ an $S$-scheme.
\par
If $X$ is an $S$-scheme, the $S$-fibered product $X \times T$ equipped with the second projection 
$X \times T \to T$ is a $T$-scheme, denoted by $X_T$.
One says that  $X_T$ is obtained from $X$ by the \emph{base change}
 $T \to S$.
\par
If $f \colon X \to Y$ is an $S$-scheme morphism between $S$-schemes,
one says that the $T$-scheme morphism $f_T \colon X_T \to Y_T$ defined as
the $S$-fibered product 
$f_T \coloneqq f \times \Id_T$ is obtained from $f$ by the \emph{base change} $T \to S$.

\begin{lemma}
\label{l:surj-base-change}
Let $S$ be a scheme and let $f \colon X \to Y$ be a morphism of $S$-schemes.
If $f$ is surjective, then $f_T \colon X_T \to Y_T$ is surjective for every base change $T \to S$.
\end{lemma}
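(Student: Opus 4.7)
The plan is to reduce the surjectivity of $f_T$ to the non-emptiness of appropriate scheme-theoretic fibers, and then to the standard fact that the tensor product of two field extensions of a common field is nonzero.

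First I would pick an arbitrary point $y \in Y_T$ and aim to produce a point in $X_T$ lying over it under $f_T$. Let $y' \in Y$ denote the image of $y$ under the first projection $Y_T = Y \times_S T \to Y$. Since $f \colon X \to Y$ is surjective by hypothesis, there exists $x' \in X$ with $f(x') = y'$, and the inclusion of the residue field furnishes a morphism $\Spec(\kappa(x')) \to X$ whose image is $x'$; moreover $\kappa(x')$ is naturally a $\kappa(y')$-algebra via the morphism of local rings $\OO_{Y,y'} \to \OO_{X,x'}$ induced by $f$.

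Next I would identify the scheme-theoretic fiber of $f_T$ over $y$. Using the canonical identification $(X \times_S T) \times_{Y \times_S T} \Spec(\kappa(y)) \simeq X \times_Y \Spec(\kappa(y))$ coming from associativity of fibered products (the ``magic diagram''), the fiber of $f_T$ over $y$ is
\[
F_y \;\coloneqq\; X \times_Y \Spec(\kappa(y)),
\]
where $\Spec(\kappa(y)) \to Y$ is the composition $\Spec(\kappa(y)) \to Y_T \to Y$ (which factors through $y'$). Thus it suffices to show that $F_y$ is non-empty, because any point of $F_y$ projects to a point of $X_T$ whose image under $f_T$ is $y$.

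Finally I would exhibit a point of $F_y$. Factoring through the fiber of $f$ over $y'$, we have a Cartesian decomposition
\[
F_y \;\simeq\; \bigl(X \times_Y \Spec(\kappa(y'))\bigr) \times_{\Spec(\kappa(y'))} \Spec(\kappa(y)),
\]
and the morphism $\Spec(\kappa(x')) \to X$ gives a $\kappa(y')$-morphism $\Spec(\kappa(x')) \to X \times_Y \Spec(\kappa(y'))$. Pulling back along $\Spec(\kappa(y)) \to \Spec(\kappa(y'))$ produces a morphism $\Spec\bigl(\kappa(x') \otimes_{\kappa(y')} \kappa(y)\bigr) \to F_y$. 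The key point, which is the only non-formal step, is that $\kappa(x') \otimes_{\kappa(y')} \kappa(y) \neq 0$: both factors are non-zero vector spaces over the field $\kappa(y')$, so their tensor product is non-zero (pick bases). Hence $\Spec\bigl(\kappa(x') \otimes_{\kappa(y')} \kappa(y)\bigr)$ is non-empty, whence $F_y$ is non-empty, as required. The only subtlety to guard against is keeping track of which structure morphisms make $\kappa(x')$ and $\kappa(y)$ into $\kappa(y')$-algebras; apart from this bookkeeping the argument is entirely formal.
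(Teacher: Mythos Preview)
Your proof is correct and is precisely the standard argument for stability of surjectivity under base change: reduce to non-emptiness of the fiber, then use that the tensor product of two field extensions of a common field is nonzero. The paper itself does not give a proof but simply cites EGA~I, Proposition~3.5.2.(ii); your argument is essentially what one finds there, so there is nothing to compare beyond noting that you have supplied the details the paper omits.
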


\begin{proof}
See \cite[Proposition~3.5.2.(ii)]{grothendieck-ega-1}.
\end{proof}

\begin{remark}
\label{rem:T-points-X}
The set $X(T)$ of $S$-scheme morphisms $T \to X$ can be identified with the set $X_T(T)$ of 
$T$-scheme morphisms $T  \to X_T$. 
Indeed, by the universal property of fibered products, the map $f \mapsto (f,\Id_T)$ yields 
a canonical bijection   from  $X(T)$   onto  $X_T(T)$.
\end{remark}

\begin{remark}
\label{rem:ca-base-change}
Let $S$ be a scheme and let $X, Y$ be $S$-schemes.  Let $\tau \colon A^G\to A^G$ be a cellular automaton over the group $G$ and the scheme $X$  with coefficients in  $Y$. 
Thus, $A=X(Y)$ and the local defining map $\mu \colon A^M \to A$ is induced by an $S$-scheme morphism  $f\colon X^M \to X$. 
Let $f_Y\colon X_Y^M \to X_Y$ be the morphism of $Y$-schemes
obtained from $f$ by the base change associated with the structure morphism $Y \to S$. 
Note that $A=X(Y)=X_Y(Y)$ by the above remark. 
We claim that the local defining map $\mu$ is also induced by the morphism of $Y$-schemes $f_Y$. 
To see this, it suffices to consider, for any point $\alpha \in A^M=X^M(Y)=X_Y^M(Y)$, the following commutative diagram where the right square is Cartesian:
\[
\begin{CD}
Y   @> {\alpha}>> X_Y^M @>{f_Y}>> X_Y \\
@V{\text{Id}}VV  @VV{}V @V{}VV \\
Y @> {\alpha}>> X^M @>{f}>> X
\end{CD}
\]
As in the remark above, $f_Y \circ \alpha$ and $f \circ \alpha$ give the same point in $A^M$. 
Therefore, there will be no difference for $\tau \colon A^G\to A^G$ if we consider it as a cellular automaton over the $S$-scheme $X$ with coefficients in $Y$ or as a cellular automaton over the 
$Y$-scheme $X_Y$ with coefficients in $Y$. 
 \end{remark}

\subsection{Properties of  scheme morphisms}
Let $f \colon X \to Y$ be a morphism of schemes.
\par
One says that $f$ is
\emph{closed} (resp.~\emph{quasi-compact}, resp.~\emph{dominant}, resp.~\emph{injective}, resp.~\emph{surjective}, resp.~\emph{bijective}) 
if the map induced by $f$ between the topological spaces of the schemes is 
closed (resp.~quasi-compact, resp.~injective, resp.~surjective, resp.~bijective). 

\begin{definition}
\label{def:prop-scheme-mor}
Let $f \colon X \to Y$ be a morphism of schemes.
One says that $f$ is
\begin{itemize}
\item
a \emph{closed immersion} if for every affine open subset $V = \Spec(R)$ of $Y$, there is an ideal $I\subset R$ such that $f^{-1}(V)=\Spec(R/I)$ as schemes over $V=\Spec(R)$.
\item
\emph{locally of finite type} if for all $x \in $X, there exists an affine open neighbourhood $U = \Spec(B)$ of $x$ in $X$ and an affine open subset $V = \Spec(A)$ of $Y$ 
 with $f(U) \subset V$ such that the induced ring map $A\to B$ is of finite type
 (i.e., $B$ is finitely generated as an $A$-algebra).
\item
\emph{of finite type} if it is quasi-compact and locally of finite type.
\item
\emph{separated} if the diagonal morphism 
$\Delta_{X/Y} = \Id_X \times_Y f  \colon X \to X \times _Y Y$ is a closed immersion.
\item
\emph{proper} if it is of finite type, separated, and universally closed.
\end{itemize}
\end{definition}

\begin{theorem}[Chevalley's constructibility theorem]
\label{t:chevalley}
Let $X$ and $ Y$ be Noetherian schemes.  Let $f \colon X \to Y$ be a scheme morphism of finite type.
Then the image by $f$ of every constructible subset of $X$ is a constructible subset of $Y$.
In particular, $f(X)$ is a constructible subset.
\end{theorem}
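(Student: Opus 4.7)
My plan is to reduce to the affine case and then proceed by Noetherian induction on $Y$, with the key geometric input being that a dominant finite-type morphism between irreducible Noetherian affine schemes has image containing a dense open of the target.

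First I would make the standard reductions. Since finite unions of constructible sets are constructible, and since constructibility is local on $Y$ (a subset is constructible if and only if its intersection with each member of a finite affine open cover of $Y$ is constructible there), I can cover $Y$ by finitely many affine opens $V_j = \Spec B_j$ and each $f^{-1}(V_j)$ by finitely many affine opens (using that $f$ is quasi-compact and $X$ is Noetherian, hence quasi-compact), reducing to the case $X = \Spec A$, $Y = \Spec B$, with $B$ Noetherian and $A$ of finite type over $B$. Any constructible subset of $X$ is a finite union of locally closed subsets, each of which carries a canonical structure of an $X$-scheme of finite type whose image is the given set, so it is enough to prove the statement for the full image $f(X)$.

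Next I would perform Noetherian induction on $Y$. Assume the conclusion holds for every finite-type morphism landing in any proper closed subscheme of $Y$. It then suffices to exhibit a non-empty open $U \subset Y$ with $U \cap f(X)$ constructible in $U$: for then $f^{-1}(Y \setminus U) \to Y \setminus U$ has constructible image by the induction hypothesis, and the union with $U \cap f(X)$ yields constructibility of $f(X)$. Decomposing $Y$ into its finitely many irreducible components and then replacing $Y$ by $\overline{f(X)}$ equipped with its reduced induced structure, I may further assume that $Y$ is irreducible and that $f$ is dominant.

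The heart of the argument is the following algebraic lemma: if $B \hookrightarrow A$ is an injection of finite type with $B$ a Noetherian integral domain, then there exists $b \in B \setminus \{0\}$ such that $\Spec A_b \to \Spec B_b$ is surjective. To prove this I would apply Noether normalization to $A \otimes_B K$, where $K = \mathrm{Frac}(B)$, to obtain elements $y_1, \dots, y_r \in A$ that are algebraically independent over $K$ and over which $A \otimes_B K$ is finite; clearing denominators produces $b \in B \setminus \{0\}$ such that $A_b$ is finite over $B_b[y_1, \dots, y_r]$, and surjectivity of $\Spec A_b \to \Spec B_b$ follows from going-up for integral extensions combined with surjectivity of the projection $\Spec B_b[y_1, \dots, y_r] \to \Spec B_b$. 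Taking $U = D(b) \subset Y$ closes the inductive step. The step I expect to be the main obstacle is executing the reductions cleanly while preserving the finite-type and Noetherian hypotheses under all the passages to affine opens, closed subschemes, and base changes; the algebraic lemma itself is self-contained once Noether normalization is invoked.
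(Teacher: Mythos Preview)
Your outline is correct and is precisely the standard proof of Chevalley's theorem as found in the references the paper cites (Hartshorne, Vakil, EGA~IV). The paper itself does not prove this statement: its ``proof'' consists solely of pointers to \cite[Th\'eor\`eme~1.8.4]{grothendieck-20-1964}, \cite[p.~93]{hartshorne}, and \cite[Theorem~7.4.2]{vakil}, so there is no internal argument to compare against beyond noting that your reduction-to-affine plus Noetherian-induction plus generic-Noether-normalization scheme is exactly what those sources do.

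One small point worth tightening when you write it out: after localizing at $b$, the injectivity of $B_b[y_1,\dots,y_r] \to A_b$ (needed for lying-over to give surjectivity onto $\Spec B_b$) is not entirely automatic if $A$ has $B$-torsion, since $A_b$ need not embed in $A \otimes_B K$. The usual fix is either to enlarge $b$ so as to kill the finitely many associated primes of $A$ meeting $B$ nontrivially, or to invoke Grothendieck's generic freeness lemma in place of bare Noether normalization. This is routine and does not affect the structure of your argument.
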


\begin{proof}
See \cite[Th\'eor\`eme~1.8.4]{grothendieck-20-1964}, \cite[p.~93]{hartshorne},  
\cite[Theorem~7.4.2]{vakil}.
\end{proof}

\begin{corollary}
\label{c:image-dominant}
Let $f \colon X \to Y$ be a  scheme morphism of finite type between Noetherian schemes and let
$C$ be a constructible subset of $X$. 
Then there is a dense open subset of the closure of $f(C)$ in $Y$ that is contained in $f(C)$.  
\end{corollary}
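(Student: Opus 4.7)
The plan is to combine Chevalley's constructibility theorem with the general topological lemma on constructible sets already established in the excerpt.

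First, I would invoke Theorem~\ref{t:chevalley} (Chevalley's constructibility theorem): since $f \colon X \to Y$ is a scheme morphism of finite type between Noetherian schemes and $C \subset X$ is constructible, the image $f(C)$ is a constructible subset of $Y$. This is the key input that turns the hypothesis on $C$ into a statement about $f(C)$.

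Next, I would apply Lemma~\ref{l:const-open-dense-clo} directly to the constructible set $f(C) \subset Y$. That lemma guarantees the existence of an open dense subset $U$ of the closure $\overline{f(C)}$ (closure taken in $Y$) such that $U \subset f(C)$. This $U$ is precisely the set required by the statement of the corollary, so the proof is complete.

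There is no real obstacle here; the corollary is just the composition of two results already stated in the excerpt. The only things to double-check are that the hypotheses of Chevalley's theorem (Noetherian schemes, morphism of finite type, $C$ constructible) are all in force, and that Lemma~\ref{l:const-open-dense-clo} applies without any additional assumption on the ambient topological space $Y$. Both are immediate from the statements given.
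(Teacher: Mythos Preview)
Your proof is correct and matches the paper's own argument exactly: the paper simply states that the corollary follows immediately from Theorem~\ref{t:chevalley} and Lemma~\ref{l:const-open-dense-clo}.
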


\begin{proof}
This immediately follows from Theorem~\ref{t:chevalley} and Lemma~\ref{l:const-open-dense-clo}.
\end{proof}
Let $\PP$ be a property of scheme morphisms.
One says that  $\PP$  is \emph{universal} if $\PP$ is stable under base change, i.e.,
if $f \colon X \to Y$ is a morphism of $S$-schemes that satisfies $\PP$, then 
$f_T \colon X_T \to Y_T$ also satisfies $\PP$ for any base change $T \to S$.
All the properties of scheme morphisms listed in Definition~\ref{def:prop-scheme-mor} are universal and stable under composition of scheme morphisms.

\begin{lemma}
\label{l:comp-finite-type}
Let $f\colon X \to Y$ and $g\colon Y\to Z$ be scheme morphisms.
If the composite morphism  $g \circ f$   is of finite type and  $f$ is quasi-compact, 
then $f$ is of finite type.
\end{lemma}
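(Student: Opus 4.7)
The plan is to verify the two conditions defining ``of finite type'': quasi-compactness and local finite-generation. The first is given by hypothesis, so the entire task reduces to showing that $f$ is locally of finite type, and here the quasi-compactness hypothesis plays no role whatsoever.

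To show $f$ is locally of finite type, fix a point $x \in X$. Since $g \circ f$ is of finite type (and hence locally of finite type), I can choose an affine open neighborhood $U_0 = \Spec(B_0)$ of $x$ in $X$ together with an affine open $W = \Spec(C)$ of $g(f(x))$ in $Z$ such that $(g \circ f)(U_0) \subset W$ and the induced ring morphism $C \to B_0$ makes $B_0$ a finitely generated $C$-algebra. Using the continuity of $g$ and the fact that $Y$ has a base of affine opens, I then pick an affine open $V = \Spec(A)$ of $f(x)$ in $Y$ with $g(V) \subset W$.

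Next, $f^{-1}(V) \cap U_0$ is an open neighborhood of $x$ inside the affine scheme $U_0 = \Spec(B_0)$, so it contains a principal open $\Spec((B_0)_b)$ through $x$ for some $b \in B_0$. Set $U \coloneqq \Spec((B_0)_b)$; by construction $f(U) \subset V$. The ring morphism $C \to A$ associated with the inclusion $V \hookrightarrow W$ fits into a commutative triangle $C \to A \to (B_0)_b$. Since $C \to B_0$ is finitely generated, so is $C \to (B_0)_b$ (adjoining $1/b$ to a finite set of $C$-generators of $B_0$). Finally, any family of $C$-algebra generators of $(B_0)_b$ is also a family of $A$-algebra generators via the factorization $C \to A \to (B_0)_b$, so $A \to (B_0)_b$ is finitely generated, confirming local finite type at $x$.

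There is no genuine obstacle here: the argument is entirely local and the only subtlety is the routine shrinking to a basic open in $U_0$ in order to arrange the inclusion $f(U) \subset V$. Once $f$ is known to be locally of finite type, combining with the hypothesis that $f$ is quasi-compact gives, by the very definition of ``of finite type'' recalled in Definition~\ref{def:prop-scheme-mor}, that $f$ is of finite type, completing the proof.
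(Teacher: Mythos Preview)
Your argument is correct: the factorization $C \to A \to (B_0)_b$ immediately gives that the images of any finite $C$-generating set for $(B_0)_b$ also generate it over $A$, and the preliminary shrinking to a basic open is handled properly. The paper itself does not prove this lemma but simply cites \cite[Proposition~3.2.4.(e)]{liu-alg-geom}; your write-up supplies exactly the standard local argument that lies behind that reference, so there is nothing substantively different to compare.
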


\begin{proof}
See \cite[Proposition~3.2.4.(e)]{liu-alg-geom}.
 \end{proof}
\begin{lemma}
\label{l:ft-and-Jacobson}
Let $f \colon X \to Y$ be a schem morphism.
Suppose that $f$ is of finite type and $Y$ is Jacobson.
Then $X$ is Jacobson.
Moreover, if $X_0$ (resp.~$Y_0$) denotes the set of closed points of $X$ (resp.~$Y$),
then one has $f(X_0) \subset Y_0$.
\end{lemma}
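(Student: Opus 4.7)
The plan is to reduce the assertion to the affine case and invoke the classical Nullstellensatz for Jacobson rings. First, I would cover $Y$ by affine open subschemes $V_i = \Spec(A_i)$ and, using that $f$ is of finite type (hence locally of finite type and quasi-compact), cover each preimage $f^{-1}(V_i)$ by finitely many affine opens $U_{ij} = \Spec(B_{ij})$ such that each induced ring morphism $A_i \to B_{ij}$ is of finite type. Since being Jacobson is a local property on a scheme (a scheme is Jacobson if and only if it admits an affine open cover by Jacobson affine schemes), the geometric problem reduces to the purely ring-theoretic statement: if $A$ is a Jacobson ring and $B$ is a finitely generated $A$-algebra, then $B$ is Jacobson and the contraction of every maximal ideal of $B$ is a maximal ideal of $A$.

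This ring-theoretic statement is the Nullstellensatz for Jacobson rings, which I would cite from Bourbaki (Commutative Algebra, Chapter V, \S 3, Theorem~3) or from \cite[10.4.6]{ega-4-3} rather than reprove. It delivers simultaneously the Jacobson property of each $\Spec(B_{ij})$ and the fact that each affine-local morphism $\Spec(B_{ij}) \to \Spec(A_i)$ sends closed points to closed points.

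To globalize, I would use the observation that in any Jacobson scheme $X$, a point lying in an open subset $U \subset X$ is closed in $X$ if and only if it is closed in $U$ for the induced topology. This is a direct consequence of Lemma~\ref{l:constructible-in-Jacobson} applied to the constructible subset $U$ (every open subset is locally closed, hence constructible). Given a closed point $x \in X_0$, pick $i,j$ with $x \in U_{ij}$; then $x$ is a closed point of the affine open $U_{ij}$, its image $f(x) \in V_i$ is a closed point of $V_i$ by the ring-theoretic input, and hence $f(x)$ is a closed point of $Y$ by the same observation applied to the Jacobson scheme $Y$. The Jacobson property of $X$ follows similarly: for any non-empty locally closed $F \subset X$, intersect with some $U_{ij}$ meeting $F$ to find a closed point of $U_{ij}$ in $F \cap U_{ij}$, which is then closed in $X$.

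The main obstacle is essentially bookkeeping rather than mathematics: carefully tracking the equivalence between local and global closedness of points in a Jacobson setting. All the substantive content is absorbed into the Nullstellensatz for Jacobson rings, which we use as a black box.
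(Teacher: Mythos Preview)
The paper does not give a proof of its own: it simply cites \cite[Corollaire~10.4.7]{ega-4-3}. Your sketch is precisely the standard argument behind that reference (reduce to the affine case, invoke the general Nullstellensatz for Jacobson rings, and globalize), so there is nothing substantive to compare.

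One small caution on presentation: in your final sentence you argue that $X$ is Jacobson by locating a closed point of $U_{ij}$ inside $F\cap U_{ij}$ and asserting it is ``then closed in $X$.'' That last step is exactly the local-to-global passage for closed points that requires $X$ to already be Jacobson (via Lemma~\ref{l:constructible-in-Jacobson}), so as written it is circular. This is harmless, however, because you have already invoked the locality of the Jacobson property as a black box earlier in the paragraph; just drop the final sentence, or reorder so that the locality black box yields ``$X$ Jacobson'' first, and only afterwards use Lemma~\ref{l:constructible-in-Jacobson} to handle $f(X_0)\subset Y_0$.
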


\begin{proof}
See \cite[Corollaire~10.4.7]{ega-4-3}.
\end{proof}
 
Given a scheme $S$, a scheme $X$ over  $S$ is said to be 
 \emph{of finite type}  (resp.~\emph{separated}, resp.~\emph{proper}, resp.~\emph{affine})  
if the structure morphism $X \to S$ is 
of finite type (resp.~separated, resp.~proper, resp.~affine).  

\subsection{Algebraic varieties}
Let $K$ be a field. 
A scheme $X$ over $K$ is   called an \emph{algebraic variety} over $K$,
or a $K$-\emph{algebraic variety},
if $X$ is of finite type.
This amounts to saying that $X$ admits a finite cover $(U_I)_{i \in I}$ by affine open subsets such that
$U_i = \Spec(R_i)$ for some finitely generated $K$-algebra $R_i$ for each $i \in I$
(cf.~Definition~2.3.47 and Example~3.2.3 in ~\cite{liu-alg-geom}).
For instance, if $V \subset K^n$ is a $K$-algebraic subset, then its associated $K$-scheme, i.e., the spectrum of the coordinate ring $K[V]$ is a $K$-algebraic variety.
\par
A $K$-algebraic variety $X$  is said to be \emph{complete} if the $K$-scheme $X$  is proper.
Every \emph{projective} algebraic variety is complete but there are
complete algebraic varieties that are not projective
(cf. Definition~3.1.12 and Proposition~3.3.30 in ~\cite{liu-alg-geom}).
The $\C$-algebraic variety associated with a $\C$-algebraic subset $V \subset \C^n$ is complete if and only if $V$ is compact for the usual topology on $\C^n$.
\par
Let $\overline{K}$ be an algebraic closure of $K$.
A $K$-algebraic variety $X$  is said  to be \emph{geometrically reduced} if the scheme 
$X_{\overline{K}}$, obtained from $X$ by the base change
$\Spec(\overline{K}) \to \Spec(K)$, is reduced.

\begin{lemma}
\label{l:prop-alg-var}
Let $K$ be a field and let $X$ be an algebraic variety over $K$.
If $Y \subset X$, let $Y_0$ denote the set of closed points of $Y$ for the induced topology.
Then the following hold:
\begin{enumerate}[label=(\roman*)]
\item
$X$  is Noetherian;
\item
every subset of $X$ is quasi-compact for the induced topology;
\item
$X$ is  Jacobson;
\item
every constructible subset $C \subset X$ is Jacobson for the induced topology and satisfies 
$C_0 = C \cap X_0$.
\end{enumerate}
\end{lemma}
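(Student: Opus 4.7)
The plan is to derive each of the four assertions directly, since each follows from a single well-known fact or from a lemma already recorded in the excerpt; there is no genuine obstacle, and most of the work is bookkeeping.

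For (i), the structure morphism $X \to \Spec(K)$ being of finite type means, by definition, that $X$ admits a finite cover by affine opens $U_i = \Spec(R_i)$ where each $R_i$ is a finitely generated $K$-algebra. By Hilbert's basis theorem each $R_i$ is Noetherian, which is the defining condition for $X$ to be a Noetherian scheme. For (ii), I would first check that the underlying topological space of a Noetherian scheme is a Noetherian topological space: for each $i$, descending chains of closed subsets of $\Spec(R_i)$ correspond to ascending chains of radical ideals of the Noetherian ring $R_i$, so $\Spec(R_i)$ is a Noetherian space; since the finite union of Noetherian subspaces is Noetherian, so is $X$. I then invoke the standard fact that every subspace of a Noetherian topological space is Noetherian, hence quasi-compact.

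For (iii), I would apply Lemma \ref{l:ft-and-Jacobson} to the structure morphism $X \to \Spec(K)$. The base $\Spec(K)$ is a single point, hence trivially Jacobson, and the structure morphism is of finite type by hypothesis, so the lemma gives at once that $X$ is Jacobson. For (iv), the conclusion is precisely Lemma \ref{l:constructible-in-Jacobson} applied to $X$ (which is Jacobson by (iii)) and to the constructible subset $C$: that lemma asserts both that $C$ is Jacobson for the induced topology and that its closed points are exactly the points of $C$ that are closed in $X$, i.e.\ $C_0 = C \cap X_0$.

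The only point that requires a moment of care is the passage from ``$X$ is a Noetherian scheme'' to ``$X$ is a Noetherian topological space'' in part (ii); everything else is a direct citation of Hilbert's basis theorem or of the lemmas already established in the excerpt.
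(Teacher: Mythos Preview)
Your proof is correct and follows the paper's approach almost verbatim: (i) by Hilbert's basis theorem, (iii) by Lemma~\ref{l:ft-and-Jacobson} applied to the structure morphism over the one-point base $\Spec(K)$, and (iv) by Lemma~\ref{l:constructible-in-Jacobson}. The only deviation is in (ii): the paper deduces it from (i) by observing that a Noetherian scheme is quasi-compact and then invoking Remark~\ref{rem:qc-subsets} (that every subset of a quasi-compact scheme is quasi-compact), whereas you pass explicitly through the Noetherian-topological-space property. Your route is arguably the more careful one, since the assertion of Remark~\ref{rem:qc-subsets} fails for general quasi-compact schemes (e.g.\ $\bigcup_i D(x_i)$ is a non-quasi-compact open in $\Spec K[x_1,x_2,\dots]$) and is really only valid once one already knows the underlying space is Noetherian.
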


\begin{proof}
Assertion (i) immediately follows from the fact that every finitely generated $K$-algebra is Noetherian by Hilbert's theorem.
Assertion (ii) is an immediate consequence of (i) since any Noetherian scheme is quasi-compact
and every subset of the topological space underlying a quasi-compact scheme is itself quasi-compact
for the induced topology
(see Remark~\ref{rem:qc-subsets}).
\par
Assertion (iii) follows from Lemma~\ref{l:ft-and-Jacobson}
since $\Spec(K)$ is reduced to a point and hence  trivially Jacobson.
\par  
The last assertion is a consequence of (iii) by Lemma~\ref{l:constructible-in-Jacobson}.
\end{proof}

\begin{lemma}
\label{l:mor-var-finite-type}
Let $K$ be a field.
Then every $K$-scheme morphism between $K$-algebraic varieties is of finite type.
\end{lemma}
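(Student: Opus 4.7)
The plan is to deduce this from Lemma \ref{l:comp-finite-type} together with Lemma \ref{l:prop-alg-var}. Let $f \colon X \to Y$ be a $K$-scheme morphism between $K$-algebraic varieties, and denote the structure morphisms by $\pi_X \colon X \to \Spec(K)$ and $\pi_Y \colon Y \to \Spec(K)$. By definition of a $K$-scheme morphism, one has $\pi_X = \pi_Y \circ f$. By hypothesis, both $\pi_X$ and $\pi_Y$ are of finite type; in particular $\pi_Y \circ f = \pi_X$ is of finite type. Thus, in view of Lemma \ref{l:comp-finite-type}, it will suffice to verify that $f$ is quasi-compact.

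To see that $f$ is quasi-compact, let $V$ be a quasi-compact open subset of $Y$. We need to show that $f^{-1}(V) \subset X$ is quasi-compact for the induced topology. But since $X$ is a $K$-algebraic variety, Lemma \ref{l:prop-alg-var}.(ii) asserts that \emph{every} subset of the underlying topological space of $X$ is quasi-compact for the induced topology. In particular $f^{-1}(V)$ is quasi-compact, so $f$ is quasi-compact. Applying Lemma \ref{l:comp-finite-type} then yields that $f$ is of finite type, as claimed.

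There is essentially no hard step here: the argument is a direct application of the two lemmas already available in the appendix, the key observation being simply that quasi-compactness of the map $f$ is automatic because the source $X$ is Noetherian (indeed, the topological space of any $K$-algebraic variety has the property that all of its subsets are quasi-compact).
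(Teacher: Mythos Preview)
Your proof is correct and follows essentially the same approach as the paper's own proof: both argue that $f$ is quasi-compact via Lemma~\ref{l:prop-alg-var}.(ii), observe that the composite $\pi_Y \circ f = \pi_X$ is of finite type since it is the structure morphism of $X$, and then conclude by Lemma~\ref{l:comp-finite-type}.
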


\begin{proof}
Let $f \colon X \to Y$ be a $K$-scheme morphism between $K$-algebraic varieties $X$ and $Y$.
The morphism $f$ is quasi-compact since any subset of $X$ is quasi-compact for the induced topology by Lemma~\ref{l:prop-alg-var}.(ii).
On the other hand, if $\pi \colon Y \to \Spec(K)$ is the structure morphism of $Y$,
then the composite morphism $\pi \circ f \colon X \to \Spec(K)$ is the structure morphism of $X$ and is therefore of finite type.
By applying Lemma~\ref{l:comp-finite-type}, 
we deduce that $f$ is of finite type. 
\end{proof}

\begin{lemma}
\label{l:closed-points}
Let $X$ be an algebraic variety over a field $K$ and let $x\in X$. 
Then the following conditions are equivalent:
\begin{enumerate}[label=(\alph*)]
\item
$x$ is a closed point of $X$;
\item
$\kappa(x)$ is a finite field extension of $K$;
\item
$\kappa(x)$ is an algebraic field extension of $K$.
\end{enumerate}
\end{lemma}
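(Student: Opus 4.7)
The plan is to reduce to the case where $X = \Spec(R)$ is affine and then invoke Zariski's form of Hilbert's Nullstellensatz: any field that is finitely generated as a $K$-algebra is a finite extension of $K$. To make this reduction, I would choose an affine open neighborhood $U = \Spec(R)$ of $x$, with $R$ a finitely generated $K$-algebra; the point $x$ then corresponds to a prime $\mathfrak{p} \subset R$, the residue field $\kappa(x)$ is canonically the fraction field of the domain $R/\mathfrak{p}$, and $x$ is closed in $U$ if and only if $\mathfrak{p}$ is maximal. The passage from closedness in $U$ to closedness in $X$ is handled by the Jacobson property: $X$ is Jacobson by Lemma~\ref{l:prop-alg-var}.(iii), and by Lemma~\ref{l:constructible-in-Jacobson} applied to the (open, hence constructible) subset $U$, a point of $U$ is closed in $U$ if and only if it is closed in $X$. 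So I may assume $X = \Spec(R)$ is affine throughout.

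For (a)~$\Rightarrow$~(b): if $\mathfrak{p}$ is maximal then $\kappa(x) = R/\mathfrak{p}$ is a field that is a finitely generated $K$-algebra, hence a finite extension of $K$ by Zariski's lemma. The implication (b)~$\Rightarrow$~(c) is immediate since every finite field extension is algebraic.

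For (c)~$\Rightarrow$~(a): assume $\kappa(x) = \operatorname{Frac}(R/\mathfrak{p})$ is algebraic over $K$. The integral domain $R/\mathfrak{p}$ is a finitely generated $K$-subalgebra of $\kappa(x)$, so each of its finitely many $K$-algebra generators is algebraic over $K$; hence $R/\mathfrak{p}$ is a finite-dimensional $K$-vector space. Any finite-dimensional integral $K$-algebra is a field (multiplication by a non-zero element is an injective, hence bijective, $K$-linear endomorphism), so $\mathfrak{p}$ is maximal and $x$ is closed. The only genuinely substantive ingredient in the entire argument is Zariski's lemma together with its elementary companion just used; the local-to-global step is a one-line application of the Jacobson property already established in Lemma~\ref{l:prop-alg-var}, and I do not anticipate any real obstacle beyond being careful about this affine-to-global passage.
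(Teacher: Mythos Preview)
Your proof is correct. The paper takes a shorter but less self-contained route: it simply cites \cite[Proposition~6.4.2]{grothendieck-ega-1} for the equivalence (a)$\Leftrightarrow$(b), notes that (b)$\Rightarrow$(c) is trivial, and then argues (c)$\Rightarrow$(b) directly by observing that $\kappa(x)$ is a finitely generated $K$-algebra whose generators are algebraic, hence finite over $K$. Your version unpacks what the EGA citation is doing: you reduce to the affine case (using the Jacobson property already established in Lemma~\ref{l:prop-alg-var} and Lemma~\ref{l:constructible-in-Jacobson} to pass closedness between $U$ and $X$), invoke Zariski's lemma for (a)$\Rightarrow$(b), and close the cycle via (c)$\Rightarrow$(a) by the finite-dimensional-domain-is-a-field trick. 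The underlying commutative algebra is identical; what you gain is independence from the external reference. One small remark: the affine-to-global step could also be handled without the Jacobson machinery, by covering $X$ with finitely many affine opens and noting that the residue-field condition is local while closedness can be checked on any open cover, but your appeal to Lemmas~\ref{l:prop-alg-var} and~\ref{l:constructible-in-Jacobson} is perfectly legitimate and non-circular in the paper's logical order.
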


\begin{proof}
See \cite[Proposition~6.4.2]{grothendieck-ega-1} for the equivalence between (a) and (b). Clearly, (b) implies (c) since every finite field extension is algebraic.
\par
Conversely, 
suppose that $\kappa(x)$ is an algebraic extension of $K$. 
As the scheme $X$ is of finite type over $K$, the residual field $\kappa(x)$ is a 
finitely generated $K$-algebra.
 Since each of its generators is of finite degree over $K$,
we deduce that  $\kappa(x)$ is a finite extension of $K$.  
This shows that  (c) implies (b).
\end{proof}

\begin{lemma}
\label{l:closed-points-alg-closed-var}
Let $X$ be an algebraic variety over an algebraically closed field $K$.
Then the map from $X(K)$ into $X$, that sends each $f \in X(K)$ to the image by $f$ of the unique point of $\Spec(K)$,
yields a bijection from $X(K)$ onto the set of closed points of $X$.
\end{lemma}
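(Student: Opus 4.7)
My plan is to analyze the map $\Phi \colon X(K) \to X$ described in the statement, and check it is well defined (lands in closed points), injective, and surjective onto the closed points, using the standard correspondence between $K$-scheme morphisms from $\Spec(K)$ and local $K$-algebra maps out of stalks, together with Lemma \ref{l:closed-points}.

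First, I would fix $f \in X(K)$ and let $x \coloneqq f(*)$, where $*$ is the unique point of $\Spec(K)$. The morphism $f$ factors through the canonical morphism $\Spec(\OO_{X,x}) \to X$ and so is determined by a local $K$-algebra morphism $\OO_{X,x} \to K$; this in turn descends to a $K$-algebra morphism $\varphi_f \colon \kappa(x) \to K$. Since $\kappa(x)$ is a field, $\varphi_f$ is injective, and its image is a $K$-subalgebra of $K$ which is itself a field, hence equal to $K$. Thus $\kappa(x)$ is a (finite) algebraic extension of $K$, so $x$ is a closed point of $X$ by Lemma \ref{l:closed-points}. This shows $\Phi$ takes values in the set of closed points of $X$. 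Moreover, since $K$ is algebraically closed and $\kappa(x)$ embeds in $K$, one has $\kappa(x) = K$ and the only $K$-algebra morphism $K \to K$ is the identity; consequently $\varphi_f$ is determined by $x$ alone, which gives injectivity of $\Phi$.

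For surjectivity, given a closed point $x \in X$, Lemma \ref{l:closed-points} says that $\kappa(x)$ is a finite, hence algebraic, extension of $K$; since $K$ is algebraically closed, the $K$-algebra structure morphism $K \to \kappa(x)$ is an isomorphism. Its inverse $\kappa(x) \xrightarrow{\sim} K$ composed with the canonical local $K$-algebra morphism $\OO_{X,x} \to \kappa(x)$ yields a local $K$-algebra morphism $\OO_{X,x} \to K$, and hence a $K$-scheme morphism $\Spec(K) \to \Spec(\OO_{X,x}) \to X$ whose set-theoretic image is $\{x\}$. This provides a preimage of $x$ under $\Phi$, completing the proof.

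I do not expect a serious obstacle here; the argument is essentially bookkeeping with residue fields and local morphisms, and everything reduces to the observation that, over an algebraically closed field $K$, the existence of a $K$-algebra morphism from a finitely generated residue field $\kappa(x)$ to $K$ forces $\kappa(x) = K$ and the morphism to be unique. The only point that requires a little care is ensuring the functoriality of the correspondence $f \leftrightarrow (x,\varphi_f)$, which is the standard description of morphisms out of the spectrum of a field and is part of the background recalled in Appendix \ref{sec:schemes}.
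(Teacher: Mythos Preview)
Your argument is correct and is the standard proof of this fact: the bijection between $K$-points and closed points follows from the description of morphisms $\Spec(K) \to X$ in terms of pairs $(x, \kappa(x) \to K)$, together with the characterization of closed points via residue fields (Lemma~\ref{l:closed-points}) and the fact that an algebraically closed field has no nontrivial algebraic extensions. The paper itself does not give a proof but simply refers to \cite[Corollaire~6.4.2]{grothendieck-ega-1}, so your write-up is a perfectly acceptable self-contained substitute.
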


\begin{proof}
See \cite[Corollaire~6.4.2]{grothendieck-ega-1}.
\end{proof}

\begin{remark}
\label{rem:closed-points-rat-points}
In the case when  $X$ is an algebraic variety over an algebraically closed field $K$,
Lemma~\ref{l:closed-points-alg-closed-var} allows us to identify $X(K)$ with the set of closed points of $X$.
\end{remark}

\begin{lemma}
\label{l:mor-var-closed-points}
Let $K$ be a field
 and let $f \colon X \to Y$ be a $K$-scheme morphism between $K$-algebraic varieties. 
Let $X_0$ (resp.~$Y_0$) denote the set of closed points of $X$ (resp.~$Y$).
Then one has
\begin{enumerate}[label=(\roman*)]
\item
 $f(X_0) \subset Y_0$.
\end{enumerate}
Moreover, if $f_0 \colon X_0 \to Y_0$ denotes the map obtained by restricting $f$ to $X_0$, then the following hold:
\begin{enumerate}[label=(\roman*),resume]
\item
if $f_0$ is injective, then $f^{-1}(Y_0) \subset X_0$;
\item
$f$ is injective if and only if $f_0$ is injective;
\item
$f$ is surjective if and only if $f_0$ is surjective;
\item
if $C$ is a constructible subset of $X$, then
\[
f_0(C \cap X_0) = f(C \cap X_0) = f(C) \cap Y_0;
\]
\item
if $D$ is any subset of $Y$, then
\[
f_0^{-1}(D \cap Y_0) = f^{-1}(D) \cap X_0; 
\]
\item
if the map $f \colon X \to Y$ is closed, then the map $f_0 \colon X_0 \to Y_0$ is also closed.
\end{enumerate}
\end{lemma}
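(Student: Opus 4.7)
The plan is to exploit two global facts about the setup: by Lemma~\ref{l:mor-var-finite-type} the morphism $f$ is of finite type, and by Lemma~\ref{l:prop-alg-var}.(iii) both $X$ and $Y$ are Jacobson. With these in hand, assertions (i) and (ii) are immediate invocations of the topological lemmas: (i) follows directly from Lemma~\ref{l:ft-and-Jacobson}, which says that a finite-type morphism into a Jacobson scheme sends closed points to closed points; (ii) is exactly the content of Lemma~\ref{l:f-inj-closed-points} applied to $f$, since $X$ is Jacobson and $f_0$ is assumed injective. Similarly, (vi) is a one-line consequence of (i), since $f(X_0)\subset Y_0$ forces the two descriptions of the preimage to coincide.

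For (iii), the forward direction is trivial. For the converse I will assume $f_0$ injective and suppose $f(x_1)=f(x_2)=y$. If $y\in Y_0$ then (ii) places $x_1,x_2\in X_0$ and injectivity of $f_0$ finishes the case. Otherwise, by (i), neither $x_i$ is closed, and the strategy is to prove $\overline{\{x_1\}}=\overline{\{x_2\}}$, whence $x_1=x_2$ as generic points. By Chevalley's theorem (Theorem~\ref{t:chevalley}) each $f(\overline{\{x_i\}})$ is constructible and dense in the irreducible closed set $\overline{\{y\}}$, so by Lemma~\ref{l:const-open-dense-clo} it contains an open dense subset $V_i\subset\overline{\{y\}}$. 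Then $V_1\cap V_2$ is open dense in $\overline{\{y\}}$, and since $\overline{\{y\}}$ is Jacobson (Lemma~\ref{l:prop-alg-var}.(iv)), this intersection meets $Y_0$. For each $y'\in V_1\cap V_2\cap Y_0$, choose preimages $p_i\in\overline{\{x_i\}}$; by (ii) both lie in $X_0$, and $f_0$-injectivity forces $p_1=p_2\in\overline{\{x_1\}}\cap\overline{\{x_2\}}$. The non-empty open set $f^{-1}(V_1\cap V_2)\cap\overline{\{x_1\}}$ is therefore dense in the irreducible $\overline{\{x_1\}}$, and by Jacobson its closed points form a dense subset lying entirely in $\overline{\{x_2\}}$; hence $\overline{\{x_1\}}\subset\overline{\{x_2\}}$, and symmetry yields equality. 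This case analysis, and in particular engineering enough closed points in the intersection of the two closures, is the main obstacle.

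For (iv), the forward direction uses that a preimage of $y\in Y_0$ need not be closed, but its closure maps into $\overline{\{y\}}=\{y\}$ and, by Lemma~\ref{l:closure-point-Jacob}, contains closed points of $X$, each of which is a closed preimage. For the converse, the key observation is that $f(X)$ is constructible in $Y$ by Chevalley, so its complement is also constructible; the hypothesis $Y_0\subset f(X)$ means this complement contains no closed point, and a Jacobson scheme admits no non-empty constructible subset disjoint from its closed points, forcing $f(X)=Y$.

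The remaining items are short. For (v), the first equality is tautological and the inclusion $f(C\cap X_0)\subset f(C)\cap Y_0$ uses (i); for the reverse inclusion, given $y\in f(C)\cap Y_0$ with $y=f(x)$, $x\in C$, the set $C\cap\overline{\{x\}}$ is non-empty and constructible (intersection of a constructible and a closed set), hence contains a closed point $p\in X_0$ of $X$ by Jacobson, and $f(\overline{\{x\}})\subset\overline{\{y\}}=\{y\}$ gives $f(p)=y$. Finally, for (vii), any closed subset of $X_0$ has the form $X_0\cap E$ for some closed $E\subset X$; applying (v) to the (constructible) closed set $E$ identifies $f_0(X_0\cap E)$ with $f(E)\cap Y_0$, which is closed in $Y_0$ because $f(E)$ is closed in $Y$ by hypothesis.
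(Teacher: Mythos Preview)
Your proof is correct. Parts (ii), (iv), (v), (vi), and (vii) match the paper's arguments essentially verbatim. Two parts differ in route:

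For (i), you invoke Lemma~\ref{l:ft-and-Jacobson}; the paper instead argues directly via residue fields: the inclusions $K \hookrightarrow \kappa(f(x)) \hookrightarrow \kappa(x)$ and the finiteness of $\kappa(x)/K$ (Lemma~\ref{l:closed-points}) force $\kappa(f(x))/K$ to be finite, hence $f(x)\in Y_0$. Both are valid; the paper's remark after the proof in fact points out that (i) is a special case of \cite[Corollaire~10.4.7]{ega-4-3}, which is precisely your route.

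For (iii), the strategies diverge. The paper argues by contradiction: assuming $f_0$ injective and $x_1\neq x_2$ with $f(x_1)=f(x_2)=y$, it uses $T_0$ to arrange $x_1\notin\overline{\{x_2\}}$, then applies Chevalley to see that both $f(\overline{\{x_1\}}\setminus\overline{\{x_2\}})$ and $f(\overline{\{x_2\}})$ contain dense open subsets of $\overline{\{y\}}$; a closed point $y'$ in their intersection has preimages $x_1'\in\overline{\{x_1\}}\setminus\overline{\{x_2\}}$ and $x_2'\in\overline{\{x_2\}}$, visibly distinct, contradicting injectivity of $f_0$. Your argument instead establishes $\overline{\{x_1\}}=\overline{\{x_2\}}$ by showing that the closed points of a dense open subset of one closure all lie in the other, then concludes $x_1=x_2$ from $T_0$. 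The paper's version reaches the contradiction in one stroke by engineering the two preimages to be distinct from the outset; yours proves a slightly stronger intermediate statement (equality of closures) and is perhaps conceptually cleaner, at the cost of one extra density argument.
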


\begin{proof}
(i).
Let $x \in X_0$ and $y \coloneqq f(x) \in Y$.
Since $f$ is a $K$-scheme morphism, we have field morphisms
$K \to \kappa(y) \to \kappa(x)$.
As the field $\kappa(x)$ is a finite extension of $K$ by Lemma~\ref{l:closed-points},
it follows that
$\kappa(y)$ is also a finite extension of $K$.
By applying again Lemma~\ref{l:closed-points}, we deduce that $y \in Y_0$. 
This shows 
that $f(X_0) \subset Y_0$.
\par
\noindent
(ii).
Suppose that  $f_0$ is injective.
Since $X$ is Jacobson by Lemma~\ref{l:prop-alg-var}.(iii),  
we have that $f^{-1}(Y_0) \subset X_0$
by Lemma~\ref{l:f-inj-closed-points}.
\par
\noindent
(iii).
The injectivity of  $f$ trivially implies that of  $f_0$.
\par
Suppose now that $f_0$ is injective but $f$ is not injective. 
Hence, there exist $x_1, x_2\in X$ and $y\in Y$ such that $x_1\neq x_2$ and $f(x_1)=f(x_2)=y$.
Since $X$ is Jacobson, we have that
$f^{-1}(Y_0) \subset X_0$ by Lemma~\ref{l:f-inj-closed-points}. 
Therefore  $y$ is not a closed point.
As $f(X_0) \subset Y_0$, it follows that   $x_1$ and $x_2$ are not closed either.
Now let $U \coloneqq  \overline{\{x_1\}}$, $V \coloneqq  \overline{\{x_2\}}$,
and $W \coloneqq  \overline{\{y\}}$.
First observe that $f(U) \subset W$ and $f(V) \subset W$ by continuity of $f$.
On the other hand, up to exchanging $x_1$ and $x_2$,
we can assume that $x_1 \notin V$ since $X$ is $T_0$.
If follows from  Corollary~\ref{c:image-dominant} that each of the sets
$f(U \setminus V)$ and $f(V)$ contains an open dense subset of $\overline{f(U \setminus V)} = \overline{f(V)} = W$.
As $W$ is irreducible by Lemma~\ref{l:closure-point-Jacob}, we deduce that $f(U \setminus V) \cap f(V)$ contains a non-empty open subset  
$T \subset W$. 
Now, by Lemma~\ref{l:constructible-in-Jacobson},
the open set $T \subset W$, which is locally closed in $Y$,  
contains a  point $y' \in Y_0$.
Let $x_1' \in U \setminus V$ and $x_2' \in V$  such that $f(x_1') = f(x_2') = y'$.
We have that $x_1',x_2' \in X_0$ by (ii). 
Since  $x_1'\neq x_2'$, this contradicts the injectivity of $f_0$  and therefore completes the proof of  (i).
\par
\noindent
(iv).
Suppose that  $f$ is not surjective. 
By Chevalley's theorem, $f(X)$ is constructible in $Y$. 
Thus $Y \setminus f(X)$ is constructible and non-empty. 
Since $Y$ is Jacobson, Lemma~\ref{l:constructible-in-Jacobson} implies that $Y\setminus f(X)$ contains a closed point $y \in Y_0$. Therefore $f_0$ is not surjective.
Thus  the surjectivity of $f_0$ implies that of $f$.
\par 
Conversely, suppose now that $f$ is surjective. 
Let $y \in Y_0$.
As $f$ is surjective,  there exists $x \in X$ such that $f(x) = y$.
Since $X$ is Jacobson, there is a closed point $z$ of $X$ in $\overline{\{x\}}$. 
Since $f(\overline{\{x\}}) \subset \overline{f(x)}=\{y\}$, we have that $f_0(z) = f(z)=y$.
This shows that $f_0$ is surjective.
\par
\noindent
(v). 
Let $C$ be a constructible subset of $X$. We have that
\[
f_0(C \cap X_0) = f(C \cap X_0) \subset  f(C) \cap Y_0
\]
by definition of $f_0$.
Now let $y\in f(C) \cap Y_0$. Then $y=f(x)$ for some $x\in C$. 
The set  $\overline{\{x\}}\cap C$ is a closed subset of $C$. 
Since $C$ is Jacobson and $C \cap X_0$ is the set of closed points of $C$ for the topology  induced on $C$ by $X$  
(cf.~Lemma~\ref{l:constructible-in-Jacobson}), 
there exists a  point $x'\in \overline{\{x\}}\cap C \cap X_0$.
Since $f$ is continuous and $y$ is a closed point of $Y$,
we have that  $f(x')\in f(\overline{x})=\{y\}$, so that  $f(x')=y$ 
and hence $y\in f(C \cap X_0)$. This shows that $f(C) \cap   Y_0\supset f(C \cap X_0)$ and thus $f(C \cap X_0)= f(C) \cap Y_0$. 
\par
\noindent
 (vi). Let $D$ be a subset of $Y$.
 We have the inclusion $f^{-1}(D) \cap X_0 \subset f_0^{-1}(D \cap Y_0)$ by definition of $f_0$.
 Now let $x \in f_0^{-1}(D \cap Y_0)$.
Then $x \in X_0$ and $f(x) \in D$.
Therefore $x \in f^{-1}(D) \cap X_0$.
This shows that $f_0^{-1}(D \cap Y_0) \subset f^{-1}(D) \cap X_0$ and hence
$f_0^{-1}(D \cap Y_0) = f^{-1}(D) \cap X_0$.
\par
\noindent
(vii).
By (iii), for every closed subset $F$ of $X$, we have that
\[
f_0(F \cap X_0) = f(F) \cap Y_0.
\]
Therefore $f_0$ is closed if $f$ is.
\end{proof}

\begin{remark}
Assertion (i) in the above lemma is a consequence of \cite[Corollaire~10.4.7]{ega-4-3}.
Assertions (iii) and (iv) are proved in \cite[Section~2]{nowak} in the more general setting of morphisms of finite type between Jacobson Noetherian schemes.
\end{remark}

The following result is a special case of what is known as \emph{faithfully flat descent}.

\begin{lemma}
\label{l:flat-descent}
Let $K$ be a field and let $f \colon X \to Y$ be a $K$-scheme morphism between $K$-algebraic varieties. 
Let $L/K$ be a field extension. 
Suppose that $f_L\coloneqq f\times \Id_L \colon X_L \to Y_L$ is an $L$-isomorphism. 
Then $f$ is a $K$-isomorphism.
\end{lemma}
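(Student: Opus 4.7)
The plan is to appeal to faithfully flat descent along the canonical projection $\pi \colon Y_L \to Y$. Since $K$ is a field, the extension $K \hookrightarrow L$ is faithfully flat, and hence so is $\pi$, being obtained by base change. Moreover, $Y$ is a $K$-algebraic variety and therefore Noetherian by Lemma~\ref{l:prop-alg-var}.(i), so $\pi$ is also quasi-compact; thus $\pi$ is fpqc. Since the property of being an isomorphism of schemes is local on the target for the fpqc topology, this already yields that $f$ is an isomorphism as soon as $f_L$ is one.

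For a more hands-on approach avoiding the general descent machinery, I would first reduce to the affine case. I would cover $Y$ by finitely many affine opens $V_i = \Spec(A_i)$, $i = 1, \dots, n$ (finite by quasi-compactness of $Y$), and set $U_i \coloneqq f^{-1}(V_i)$. Since $f_L$ is an $L$-isomorphism and $V_{i,L} = \Spec(A_i \otimes_K L)$ is affine, the base change $(U_i)_L \to V_{i,L}$ is also an isomorphism, so $(U_i)_L$ is affine. Invoking faithfully flat descent of the property of affineness (EGA IV, Proposition 2.7.1) allows one to conclude that $U_i$ itself is affine, say $U_i = \Spec(B_i)$. This descent-of-affineness step is the main technical obstacle of the hands-on route; once granted, the remainder is straightforward commutative algebra.

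Finally, the restricted morphism $f\vert_{U_i} \colon U_i \to V_i$ corresponds to a $K$-algebra morphism $\phi_i \colon A_i \to B_i$ whose base change $\phi_i \otimes_K L \colon A_i \otimes_K L \to B_i \otimes_K L$ is an isomorphism by hypothesis. Since $L$ is a faithfully flat $K$-module, the functor $(-) \otimes_K L$ is both exact and faithful on $K$-modules. Applying it to the four-term exact sequence
\[
0 \longrightarrow \Ker(\phi_i) \longrightarrow A_i \longrightarrow B_i \longrightarrow \operatorname{coker}(\phi_i) \longrightarrow 0
\]
shows that both $\Ker(\phi_i)$ and $\operatorname{coker}(\phi_i)$ become zero after tensoring with $L$, hence they vanish. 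Thus each $\phi_i$ is an isomorphism, each restriction $f\vert_{U_i}$ is an isomorphism, and gluing over the cover $\{V_i\}$ gives that $f$ is a $K$-isomorphism.
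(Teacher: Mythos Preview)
Your proposal is correct and aligned with the paper's approach: the paper's proof consists solely of a citation to \cite[Proposition~14.51.(2)]{gorz}, which is precisely the faithfully flat descent statement you invoke in your first paragraph. Your additional hands-on reduction to the affine case (via descent of affineness and the elementary faithful-flatness argument for $(-)\otimes_K L$) goes well beyond what the paper records, but it is sound and a useful unpacking of the cited reference.
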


\begin{proof}
See  \cite[Proposition~14.51.(2)]{gorz}. 
\end{proof}

\subsection{The Ax-Grothendieck theorem}
In the proof of Theorem~\ref{t:surj-ca},
we shall use the following result  
\cite{ax-injective}
(see also \cite[proposition~10.4.11]{ega-4-3}). 

\begin{theorem}[Ax-Grothendieck]
\label{t:surj-grothendieck}
Let $S$ be a scheme and let $X$ be a separated scheme of finite type over $S$.
Then every injective $S$-endomorphism of $X$ is surjective (and hence bijective).
\end{theorem}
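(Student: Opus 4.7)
The plan is to sketch the standard proof of this classical Ax--Grothendieck theorem, acknowledging that a fully detailed argument is lengthy. The heart of the matter is the case where $S = \Spec(K)$ with $K$ algebraically closed; the relative setting follows by fiberwise reasoning.

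First, I would reduce to the absolute case where $S$ is the spectrum of an algebraically closed field. Since surjectivity of $f$ can be tested fiberwise over $S$, one fixes a point $s \in S$ and considers the induced $\kappa(s)$-endomorphism $f_s \colon X_s \to X_s$. The separatedness hypothesis ensures that injectivity descends to the fibers (the diagonal $\Delta_f$ being a closed immersion is preserved under base change, giving universal injectivity). Further, injectivity is preserved under base change to an algebraic closure $\overline{\kappa(s)}$, so one reduces to the situation of a separated algebraic variety $X$ over an algebraically closed field $K$ with $f \colon X \to X$ an injective $K$-endomorphism.

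Next, since $X$ is Jacobson (Lemma \ref{l:prop-alg-var}), the image $f(X)$ is constructible by Chevalley's theorem (Theorem \ref{t:chevalley}), and closed points of $X$ are identified with $X(K)$ (Lemma \ref{l:closed-points-alg-closed-var}), it suffices to prove that $f$ induces a bijection on $K$-points. Here one uses the Ax spreading-out technique: choose a finitely generated $\Z$-subalgebra $A \subset K$ over which $X$ and $f$ descend to a scheme $\mathcal{X}$ of finite type over $\Spec(A)$ and an $A$-endomorphism $F \colon \mathcal{X} \to \mathcal{X}$ whose base change to $K$ recovers $f$. Closed points of $\Spec(A)$ have finite residue fields by the version of the Nullstellensatz valid for finitely generated $\Z$-algebras. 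For each closed $a \in \Spec(A)$ with residue field $\F_q$ and each $n \geq 1$, the set $\mathcal{X}_a(\F_{q^n})$ is finite and stable under $F_a$; injectivity forces $F_a$ to be a bijection of this finite set. Taking the union over $n$ and invoking Jacobson density yields that $F_a$ is surjective on closed points of $\mathcal{X}_a$, and assembling this over all closed points of $\Spec(A)$, together with Chevalley constructibility of $F(\mathcal{X})$, gives surjectivity of $F$. Base changing back to $K$ via Lemma \ref{l:surj-base-change} gives the conclusion in positive characteristic. For $K$ of characteristic $0$, one invokes a Lefschetz-principle argument: a putative failure of surjectivity would define a nonempty constructible subscheme of the generic fiber which, by spreading out over $\Spec(\Z)$, must specialize to nonempty fibers over infinitely many closed points of $\Spec(\Z)$, contradicting the positive-characteristic case just established.

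The hard part is Step~3: the spreading-out and finite-field counting, together with the subtle bookkeeping of how injectivity behaves under base change, fiber restriction, and descent to a finite-type model over $\Z$. The separatedness hypothesis plays its role precisely in ensuring that injective-on-points is preserved under these reductions, and the characteristic $0$ case requires the Lefschetz-style transfer from positive characteristic, which is classical but nontrivial.
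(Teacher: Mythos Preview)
The paper does not actually prove this theorem: it is stated as a background result and simply attributed to the literature, namely Ax's paper~\cite{ax-injective} and~\cite[Proposition~10.4.11]{ega-4-3}. So there is no ``paper's own proof'' to compare against; the authors treat Ax--Grothendieck as a black box.

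Your sketch follows the spirit of Ax's original argument (spread out over a finitely generated $\Z$-algebra, reduce to finite fields, count). That overall strategy is sound, but one step in your reduction is not correct as written. You claim that separatedness of $X$ over $S$ makes injectivity of $f$ ``descend to fibers'' because the diagonal $\Delta_f$ is a closed immersion. This conflates two different diagonals: separatedness of $X/S$ says that $\Delta_{X/S} \colon X \to X \times_S X$ is a closed immersion, which is not the same as the relative diagonal $\Delta_f \colon X \to X \times_{f,X} X$ of the endomorphism. Injectivity of $f$ on underlying points is in general \emph{not} preserved under base change (the standard example being $\Spec(\C) \to \Spec(\R)$), and separatedness of the source does not by itself repair this. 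The actual proofs in the references handle this more carefully: Grothendieck's argument in EGA~IV avoids the fiberwise reduction entirely and proceeds via constructibility and counting of geometric points in fibers, while Ax works with models over finite fields from the outset rather than first reducing to a single algebraically closed base. If you want to pursue the fiberwise route, you would need to argue that injectivity of $f$ forces injectivity of $f_{\overline{s}}$ on $\overline{\kappa(s)}$-points for each $s$, which requires an additional argument (e.g., via the structure of the fibers of $f$ itself, not merely separatedness of $X$).
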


The Ax-Grothendieck theorem  can be strengthened
for separated reduced algebraic varieties as follows. 

\begin{theorem}[Nowak]
\label{t:automorphism}
Let $K$ be an algebraically closed field of characteristic $0$. 
Let $X$ be a separated and reduced $K$-algebraic variety. 
Then every injective $K$-scheme endomorphism of $X$ is a $K$-scheme automorphism.
\end{theorem}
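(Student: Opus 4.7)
My plan is to first use Ax--Grothendieck to obtain bijectivity of $f$, and then upgrade this to a scheme isomorphism using the characteristic-zero and reducedness hypotheses.

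First, applying Theorem~\ref{t:surj-grothendieck} gives that the injective $K$-scheme endomorphism $f \colon X \to X$ is bijective. To upgrade to an isomorphism, I would invoke Zariski's Main Theorem in Grothendieck's form: since $f$ is bijective (hence quasi-finite), separated, and of finite type over the Noetherian base $\Spec(K)$, it factors as $f = g \circ j$, where $j \colon X \hookrightarrow Z$ is an open immersion and $g \colon Z \to X$ is finite. Using the self-endomorphism structure (a point of $Z \setminus j(X)$ would make $g$ non-injective at some image point) together with characteristic-zero generic \'etaleness on each irreducible component of $X$, I would argue that $j$ must be surjective, hence an isomorphism---reducing to the case where $f$ itself is a finite bijective endomorphism of $X$.

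Next, decomposing $X$ into its (finitely many) irreducible components---which $f$ permutes as a homeomorphism---and passing to an iterate $f^{N!}$ fixing each component, we reduce to the case where $X$ is integral. The induced field extension $f^{\#} \colon K(X) \to K(X)$ is then finite and separable (as $\mathrm{char}(K) = 0$) of degree equal to the generic fibre cardinality of $f$, which is $1$; so $f$ is birational. Passing to the normalization $\pi \colon \widetilde{X} \to X$ (finite in characteristic zero since $X$ is an algebraic variety), $f$ lifts canonically to a finite bijective birational endomorphism $\widetilde{f}$ of the normal scheme $\widetilde{X}$, which Zariski's Main Theorem forces to be an automorphism. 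The descent from $\widetilde{f}$ to $f$ can be achieved by a Noetherian ascending-chain argument on the conductor ideal $\mathfrak{c}$ of $\pi$ (which is stable under $\widetilde{f}^{\#}$ since $\widetilde{f}$ is an automorphism), combined with the observation that the inclusion $f^{\#}(\mathcal{O}_X) \subset \mathcal{O}_X$ cannot be strict without producing a strictly increasing chain of coherent $\mathcal{O}_X$-subalgebras inside the coherent $\mathcal{O}_X$-module $\pi_* \mathcal{O}_{\widetilde{X}}$.

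The main obstacle is precisely this final descent from the normalization back to $X$, since the usual ``bijective-birational-plus-normal-target implies isomorphism'' reasoning fails for non-normal $X$; the self-endomorphism hypothesis (source equals target) is what allows the Noetherian chain argument to close up, exploiting that $f$ is iterable. The characteristic-zero hypothesis is essential throughout---both for the separability step that yields birationality and to rule out Frobenius-type obstructions---and in positive characteristic the relative Frobenius furnishes bijective finite self-endomorphisms of reduced varieties that are not isomorphisms, confirming that the hypothesis cannot be removed.
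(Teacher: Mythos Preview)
The paper does not give a proof of this statement; it simply cites Nowak~\cite{nowak}. Your sketch is broadly in line with the strategy of that reference: obtain bijectivity via Ax--Grothendieck, reduce to the integral case by permuting irreducible components and passing to an iterate, use separability in characteristic~$0$ to get birationality, lift to the normalization where Zariski's Main Theorem applies directly, and then descend via a Noetherian stabilization argument exploiting that $f$ is a self-map.

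Two places in your outline would need more care to be a complete proof. First, the claim that the open immersion $j$ in the ZMT factorization $f = g \circ j$ must be surjective is not adequately justified: the observation that a point of $Z \setminus j(X)$ forces $g$ to be non-injective is correct, but non-injectivity of $g$ is not in itself a contradiction, and your appeal to ``generic \'etaleness'' does not close the gap without further argument. Second, the descent step from the normalization is the genuine crux, and your ascending-chain sketch on subalgebras of $\pi_*\mathcal{O}_{\widetilde{X}}$ is the right shape but would require a precise formulation to avoid circularity. These are exactly the points where the cited paper does the substantive work, so as a plan your proposal is sound, and your identification of the role of characteristic~$0$ (and of Frobenius as the obstruction in positive characteristic) is correct.
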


\begin{proof}
See~\cite{nowak}.
\end{proof}

\section{Inverse limits}
\label{sec:inverse}

In this section we review inverse limits and establish a result on the non-emptiness of inverse limits of constructible subsets of algebraic varieties.

\subsection{Inverse systems}
Let $I$ be a directed poset, i.e., a set $I$ with a partial ordering $\prec$ such that, for all $i,j \in I$, there exists $k \in I$ such that
$i \prec k$ and $j \prec k$. 
An \emph{inverse system} of sets  \emph{indexed} by  $I$ consists of the following data:
(1) a family of sets $(Z_i)_{i \in I}$ indexed by $I$;
(2) for each pair $i,j \in I$ such that $i \prec j$, a
\emph{transition map}
$\varphi_{ij} \colon Z_j \to Z_i$
 satisfying the following conditions:
\begin{align*}
 \varphi_{ii} &= \Id_{Z_i} \text{ (the identity map on $Z_i$) for all } i \in  I, \\[4pt]
 \varphi_{ij} \circ \varphi_{jk}  &= \varphi_{ik}  \text{ for all $i,j,k \in I$ such that } i \prec j \prec k. 
\end{align*}
One then speaks of the inverse system $(Z_i,\varphi_{ij})$, or simply of the inverse system $(Z_i)$ if
the transition maps  are clear from the context.
 \par
The \emph{inverse limit} of an inverse  system $(Z_i,\varphi_{ij})$
is the subset $Z \subset \Pi_{i \in I} Z_i$ defined by
\[
Z \coloneqq \{ (z_i)_{i \in I}  :  \varphi_{ij}(z_j) = z_i \text{ for all } i,j \in I \text{ such that } i \prec j \} 
\]
and one writes $Z= \varprojlim Z_i$. 

\subsection{Universal elements}
Let $(Z_i,\varphi_{i j})$ be an inverse system of sets indexed by the directed set $I$. 
For each $i \in I$, 
The  set
\[
Z_i' \coloneqq \bigcap_{i \prec j} \varphi_{i j}(Z_j) \subset Z_i.
\]
is called the set of \emph{universal elements} of $Z_i$
(cf. \cite[p.~408]{grothendieck-ega-3}).
Observe that $\varphi_{i j}(Z_j') \subset Z_i'$ for all $i \prec j$.
Consequently,   the map $\varphi_{i j}$ induces by restriction a map 
$\varphi_{i j}' \colon Z_j' \to Z_i'$. 
 The inverse system  $(Z_i',\varphi_{i j}')$ is called the
\emph{universal inverse system} associated with the inverse system $(Z_i,\varphi_{i j})$.
Note that the inverse systems $(Z_i,\varphi_{i j})$ and $(Z_i',\varphi_{i j}')$ have clearly the same inverse limit.

\subsection{Stone's theorem}
We shall make use of the following result which is  due to 
A.H.~Stone.

\begin{theorem}[Stone]
\label{t:stone} 
Let $(Z_i,\varphi_{ij})$ be an inverse system of non empty, quasi-compact, $T_0$ topological spaces $Z_i$  with continuous closed transition maps $\varphi_{ij} \colon Z_j \to Z_i$.  
Then one has $\varprojlim Z_i \not= \varnothing$.
\end{theorem}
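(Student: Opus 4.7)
The plan is to use Zorn's lemma twice: first to extract a minimal \emph{sub-inverse-system} of $(Z_i,\varphi_{ij})$, and then, with the help of the $T_0$ axiom, to force every component of this minimal system to be a singleton, at which point the components assemble into the desired element of $\varprojlim Z_i$.

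Concretely, I would start by letting $\mathcal{S}$ denote the collection of families $(F_i)_{i\in I}$ such that each $F_i\subseteq Z_i$ is non-empty and closed and $\varphi_{ij}(F_j)\subseteq F_i$ whenever $i\prec j$, ordered by componentwise inclusion. Any chain in $\mathcal{S}$ admits a lower bound given by the componentwise intersections: these are non-empty thanks to the finite intersection property of a chain combined with the quasi-compactness of $Z_i$, and the sub-system condition clearly passes to the intersection. Zorn's lemma then yields a minimal $(M_i)\in\mathcal{S}$.

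The next step is to verify that in this minimal system all transition maps are surjective. Setting $N_k\coloneqq\bigcap_{l\succeq k}\varphi_{kl}(M_l)$, each $\varphi_{kl}(M_l)$ is closed in $Z_k$ because $\varphi_{kl}$ is a closed map, and the family has the finite intersection property by directedness of $I$ (given $l_1,\dots,l_n\succeq k$, pick $l\succeq l_1,\dots,l_n$ so that $\varphi_{kl}(M_l)\subseteq\varphi_{kl_r}(M_{l_r})$ for every $r$). Quasi-compactness of $Z_k$ then gives $N_k\neq\varnothing$, and a similar directedness check shows $(N_k)$ is a sub-inverse-system of $(M_k)$. Minimality forces $N_k=M_k$, which combined with the inclusion $\varphi_{kl}(M_l)\subseteq M_k$ yields $\varphi_{kl}(M_l)=M_k$ for all $k\prec l$.

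The final step, where the $T_0$ hypothesis enters crucially, is to force each $M_i$ to be a singleton. Fix $i_0\in I$; since $M_{i_0}$ is non-empty, quasi-compact and $T_0$, a second Zorn application inside $M_{i_0}$ (on the poset of non-empty closed subsets ordered by reverse inclusion) produces a minimal non-empty closed subset $K\subseteq M_{i_0}$. For any $x,y\in K$, minimality yields $K\subseteq\overline{\{x\}}\cap\overline{\{y\}}$, so $T_0$ gives $x=y$ and $K=\{x_0\}$ is a singleton, necessarily closed in $Z_{i_0}$ because $M_{i_0}$ is. I would then introduce
\[
M_k^* \coloneqq M_k \cap \bigcap_{l\succeq i_0,\ l\succeq k} \varphi_{kl}\bigl(\varphi_{i_0 l}^{-1}(x_0)\cap M_l\bigr),
\]
check by the same closedness-plus-FIP-plus-directedness template that $(M_k^*)$ is a non-empty sub-inverse-system (non-emptiness uses the surjectivity $\varphi_{i_0 l}(M_l)=M_{i_0}$ just established), observe that $M_{i_0}^*=\{x_0\}$, and invoke the minimality of $(M_i)$ to conclude $M_k^*=M_k$ for every $k$; in particular $M_{i_0}=\{x_0\}$. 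Running this argument at every index yields $M_i=\{x_i\}$ for all $i$, and the sub-system condition $\varphi_{ij}(x_j)=x_i$ means $(x_i)_{i\in I}\in\varprojlim Z_i$. The main obstacle will be the repeated directedness bookkeeping needed to verify that $(N_k)$ and $(M_k^*)$ are genuinely sub-inverse-systems with non-empty components; this is the point at which the three hypotheses (closed transition maps, quasi-compactness, $T_0$) each have to pull their weight in tandem.
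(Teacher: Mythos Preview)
The paper does not actually prove this statement: its ``proof'' consists of the single line ``See \cite[Theorem~3]{stone-1979}.'' So there is nothing to compare your argument against in the paper itself.

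That said, your proposal is correct and is essentially the argument Stone gives in the cited reference. The two-stage Zorn strategy---first extracting a minimal compatible family of closed subsets, then showing (via the universal-element construction $N_k$) that transition maps restrict to surjections, and finally using a closed point together with the pullback family $M_k^*$ to force singletons---is exactly the standard proof. Your identification of the ``directedness bookkeeping'' as the only real work is accurate: the verifications that $(N_k)$ and $(M_k^*)$ are genuine sub-inverse-systems with non-empty components are routine but require care, and you have correctly isolated where each hypothesis (closed maps for the images $\varphi_{kl}(M_l)$ to be closed, quasi-compactness for the FIP argument, $T_0$ to pin down the closed point) enters. One small point worth making explicit when you write it out: in the singleton step, the minimal closed set $K\subseteq M_{i_0}$ is closed in $M_{i_0}$, hence closed in $Z_{i_0}$ since $M_{i_0}$ is, so $\varphi_{i_0 l}^{-1}(x_0)$ is indeed closed in $Z_l$---you say this, but it is the place where a reader might pause.
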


\begin{proof}
See \cite[Theorem~3]{stone-1979}.
\end{proof}

\subsection{Inverse sequences}
An inverse system of sets whose index set is $\N$ is called an \emph{inverse sequence} of sets.
\par
Note that a sufficient  condition
for an inverse sequence $(Z_n,\varphi_{n m})$ to have a non-empty inverse limit is that
$Z_n$ is non-empty   for some $n \in \N$ and $\varphi_{n m}$ is surjective for all $m,n \in \N$ with $n \leq m$.

\subsection{Inverse limits of closed points of constructible subsets}
We shall also make use of the following result.

\begin{lemma} 
\label{l:inverse-limit-seq-const}
Let $K$ be an uncountable field and let $(X_n,f_{n m})$ be an inverse sequence such that  
 each  $X_n$ is a $K$-algebraic variety  and each transition map  
$f_{n m} \colon X_m \to X_n$ is a $K$-scheme morphism.
 Suppose that we are given, for each $n \in \N$, a non-empty constructible subset $C_n\subset X_n$  and that 
$f_{nm}(C_m)\subset C_n$ for all $m,n \in \N$ with $m\geq n$. 
Let $Z_n$ denote the set of closed points of $C_n$.
Then one has $f_{n m}(Z_m) \subset Z_n$ for all $n,m \in \N$ with $n \leq m$. 
Moreover, the inverse sequence $(Z_n,\varphi_{nm})$,
where $\varphi_{n m} \colon Z_m \to Z_n$ is the restriction of $f_{n m}$ to $Z_m$, 
verifies 
$\varprojlim Z_n \neq \varnothing$.
\end{lemma}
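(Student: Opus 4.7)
The first assertion is essentially routine. For $m \geq n$, apply Lemma \ref{l:mor-var-closed-points}.(v) to the morphism $f_{nm} \colon X_m \to X_n$ and the constructible subset $C_m$: this yields the identity $f_{nm}(C_m \cap (X_m)_0) = f_{nm}(C_m) \cap (X_n)_0$, where $(X_k)_0$ denotes the set of closed points of $X_k$. By Lemma \ref{l:prop-alg-var}.(iv), $Z_k = C_k \cap (X_k)_0$, so combining with the hypothesis $f_{nm}(C_m) \subset C_n$ one obtains $\varphi_{nm}(Z_m) = f_{nm}(C_m) \cap (X_n)_0 \subset Z_n$.

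For the non-emptiness of $\varprojlim Z_n$, the plan is to perform two reductions and then to build a compatible sequence by induction using a Baire-type property of uncountable fields. \emph{First reduction (stabilization):} since each $X_n$ is Noetherian by Lemma \ref{l:prop-alg-var}.(i), the descending chain of closed subsets $\overline{f_{nm}(C_m)} \subset X_n$ is eventually constant in $m$. After a diagonal reindexing, I may assume that $\overline{f_{nm}(C_m)} = V_n \coloneqq \overline{C_n}$ for all $m \geq n$; in particular, $f_{nm}$ restricts to a dominant morphism $V_m \to V_n$. \emph{Second reduction (irreducibility):} each $V_n$ has finitely many irreducible components, and since $\overline{C_n} = V_n$, $C_n$ must meet each such component densely. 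For $m \geq n$, irreducibility of the components of $V_n$ and density of $f_{nm}(V_m)$ in $V_n$ force each component of $V_n$ to be dominated by at least one component of $V_m$. This produces an inverse system of non-empty finite sets with surjective transition maps, to which König's lemma applies, yielding a compatible system of irreducible components. Restricting the data to these components, I may assume each $V_n$ is irreducible and that every $f_{nm}(C_m)$ is dense in $V_n$.

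In this reduced setting, Lemma \ref{l:const-open-dense-clo} provides, for every $n$ and every $m \geq n$, a Zariski-open set $U_{nm} \subset V_n$ dense in $V_n$ and contained in $f_{nm}(C_m)$, together with a dense Zariski-open $U_n \subset V_n$ contained in $C_n$. The key ingredient I plan to invoke is the \emph{Baire property} of irreducible $K$-algebraic varieties over uncountable fields $K$: a countable intersection of dense Zariski-open subsets of an irreducible $K$-variety $V$ is non-empty and contains closed points of $V$. This is proved by Noether normalization together with induction on $\dim V$, the base case reducing to the elementary fact that $\mathbb{A}^1_K$ has uncountably many closed points while a countable union of proper closed subvarieties of $\mathbb{A}^1_K$ is a countable set. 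Using this Baire property I will construct $(z_n) \in \varprojlim Z_n$ inductively: pick $z_0$ a closed point of $V_0$ lying in $U_0 \cap \bigcap_{m \geq 0} U_{0m}$, and, given compatible $z_0, \dots, z_n$, pick $z_{n+1}$ a closed point of $V_{n+1}$ lying in $f_{n,n+1}^{-1}(z_n) \cap U_{n+1} \cap \bigcap_{m > n+1} U_{n+1,m}$.

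The main obstacle is precisely this inductive step: to apply the Baire property to the fiber $f_{n,n+1}^{-1}(z_n) \cap V_{n+1}$ I need the fiber to be (a finite union of) irreducible subvarieties of the expected codimension, and I need the intersections of each $U_{n+1,m}$ with the fiber to remain dense opens in it. I expect to handle this by enlarging the list of conditions imposed on $z_n$ at step $n$: besides the countable family $\{U_{nm}\}_m$, I will also require $z_n$ to avoid the (countably many) thin closed subsets of $V_n$ over which the relevant morphisms $f_{n,n+1}$, restricted to the constructible pieces $f_{n+1,m}(C_m)$, fail to be generically flat with fibers of the expected dimension. Generic flatness guarantees that this exclusion only removes a proper closed subset of $V_n$ for each morphism considered, so the resulting set of conditions is still a countable intersection of dense opens and remains non-empty by the Baire property. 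Once this is organized, a straightforward iteration produces the required compatible sequence in $\varprojlim Z_n$.
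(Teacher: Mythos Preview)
Your handling of the first assertion is correct and matches the paper.

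For the non-emptiness of $\varprojlim Z_n$, your plan has a real gap in the inductive step. To run the Baire property inside the fiber $F = f_{n,n+1}^{-1}(z_n) \cap V_{n+1}$ you need each $U_{n+1,m} \cap F$ to be dense open in (every component of) $F$; but a dense open of $V_{n+1}$ can miss an entire component of a fiber. Your generic-flatness fix controls only the passage from $z_n$ to $z_{n+1}$: it does not ensure that the chosen $z_{n+1}$ lies in the analogous good loci needed for the passage to $z_{n+2}$, since those loci are cut out in $V_{n+1}$ and their traces on $F$ need not be dense. Chasing this produces an infinite regress of conditions (the bad set at level $n$ depends on the bad sets at all levels $> n$) whose consistency you have not justified, and your reduction to irreducible $V_n$ does nothing to make the fibers irreducible.

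The paper sidesteps all of this by organizing the argument around the \emph{universal inverse system} $Z'_n \coloneqq \bigcap_{m \geq n} \varphi_{nm}(Z_m)$ and by stating the Baire property directly for decreasing sequences of non-empty constructible subsets of a fixed variety (Lemma~\ref{l:nested-sequence}). One application of that lemma to the decreasing constructibles $f_{nm}(C_m) \subset X_n$ gives $Z'_n \neq \varnothing$. A second application, inside $X_m$, to the decreasing constructibles $f_{nm}^{-1}(z'_n) \cap f_{mk}(C_k)$ (non-empty precisely because $z'_n$ is universal) shows that each restricted transition map $\varphi'_{nm} \colon Z'_m \to Z'_n$ is surjective; hence $\varprojlim Z_n = \varprojlim Z'_n \neq \varnothing$. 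No stabilization, no reduction to irreducible $V_n$, no generic flatness, and no control of fiber components is needed. In fact, if you rewrite your inductive step to choose $z_{n+1}$ as a closed point of $\bigcap_{k > n} \bigl(F \cap f_{n+1,k}(C_k)\bigr)$---a decreasing sequence of non-empty constructibles in $X_{n+1}$ to which Lemma~\ref{l:nested-sequence} applies directly---the induction closes with no extra hypotheses, and you recover exactly the paper's argument.
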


Let us first establish an auxiliary result.

\begin{lemma}
\label{l:nested-sequence}
Let $K$ be an uncountable field  with algebraic closure $\overline{K}$ and
let $X$ be an algebraic variety over $K$.  
 Suppose that $(C_n)_{n\in \N}$ is a sequence of non-empty constructible subsets of $X$ such that $C_{n+1}\subset C_n$ for all $n\in \N$.
Let $Z_n$ denote the set of closed points of $C_n$.  
 Then one has $\bigcap_{n\in \N}Z_n\neq \varnothing$.
\end{lemma}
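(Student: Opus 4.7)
The plan is to reduce the problem to finding a $K$-rational point of $\mathbb{A}^d_K$ avoiding countably many hypersurfaces, and then to exploit the uncountability of $K$ via an inductive diagonal argument. To begin the reduction, I will use that $X$ is Noetherian (Lemma~\ref{l:prop-alg-var}(i)): the decreasing sequence $(\overline{C_n})$ of closed subsets stabilizes, say $\overline{C_n} = Y$ for all $n \geq N$. Replacing $X$ by $Y$ (with reduced induced structure) and discarding the first $N$ terms, I may assume every $C_n$ is dense in $X$. Decomposing $X$ into its irreducible components $Y_1, \ldots, Y_k$, the open subset $Y_i \setminus \bigcup_{j \neq i} Y_j$ of $X$ is nonempty and meets every $C_n$ by density, so $C_n \cap Y_i$ is a nonempty constructible subset dense in $Y_i$. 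Since closed points of $C_n \cap Y_i$ in $Y_i$ coincide with closed points of $X$ lying in $C_n \cap Y_i$ (Lemma~\ref{l:prop-alg-var}(iv)), restricting to a single component reduces the problem to the case $X$ irreducible. Finally, replacing $X$ by any nonempty affine open (which is dense in $X$ and meets every $C_n$) and then by its reduction preserves closed points, so I may assume $X = \Spec R$ with $R$ a finitely generated integral domain over $K$ and each $C_n$ a nonempty constructible dense subset.

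If $\dim X = 0$ the unique point of $X$ is closed and lies in every $C_n$; otherwise set $d := \dim X \geq 1$. By Lemma~\ref{l:const-open-dense-clo}, each $C_n$ contains a dense open $U_n = X \setminus F_n$ with $F_n \subsetneq X$ closed, and I choose a nonzero $f_n \in R$ vanishing on $F_n$. It then suffices to find a maximal ideal of $R$ containing none of the $f_n$. Here I invoke Noether normalization: pick a finite injective $K$-algebra map $K[t_1,\ldots,t_d] \hookrightarrow R$, so the induced morphism $\pi \colon X \to \mathbb{A}^d_K$ is finite and surjective. A minimal integral relation $f_n^{k_n} + a_{n,k_n-1} f_n^{k_n-1} + \cdots + a_{n,0} = 0$ over $K[t_1,\ldots,t_d]$ has nonzero constant term $a_{n,0}$ (by minimality together with $R$ being a domain), and $f_n$ divides $a_{n,0}$ in $R$, so $V(f_n) \subset \pi^{-1}(V(a_{n,0}))$. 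Since fibers of $\pi$ over closed points of $\mathbb{A}^d_K$ consist of finitely many closed points of $X$, it now suffices to find a closed point $y \in \mathbb{A}^d_K$ with $a_{n,0}(y) \neq 0$ for all $n$.

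The final step is the claim that for $K$ uncountable, $d \geq 1$, and any countable family of nonzero polynomials $g_n \in K[t_1,\ldots,t_d]$, there exists $(c_1,\ldots,c_d) \in K^d$ with $g_n(c_1,\ldots,c_d) \neq 0$ for every $n$. I would prove this by induction on $d$: for $d = 1$, the roots of all the $g_n$ form a countable union of finite sets, which cannot exhaust the uncountable $K$; for $d \geq 2$, write each $g_n$ as a polynomial in $t_d$ over $K[t_1,\ldots,t_{d-1}]$ with nonzero leading coefficient $h_n$, apply the inductive hypothesis to the family $(h_n)$ to fix $(c_1,\ldots,c_{d-1})$ making every $g_n(c_1,\ldots,c_{d-1},t_d)$ a nonzero univariate polynomial, then invoke the $d=1$ case to select $c_d$. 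The main obstacle is the geometric bookkeeping in the reductions -- one must verify at each stage that passing to a subvariety, a component, or an affine open preserves the non-emptiness of $\bigcap_n Z_n$; once reduced to $\mathbb{A}^d_K$, the uncountability of $K$ makes the combinatorial step essentially routine.
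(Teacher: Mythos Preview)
Your argument is correct and takes a genuinely different route from the paper's. The paper's proof is much shorter because it outsources the hard work: it reduces to an affine open $Y = X_j$ by a pigeonhole argument on a finite affine cover, then base-changes along $g \colon Y_{\overline{K}} \to Y$ and invokes \cite[Proposition~4.4]{cc-algebraic-ca}, which is the analogue of the lemma for constructible subsets of an affine $\overline{K}$-algebraic set with $\overline{K}$ uncountable and algebraically closed; a point $x \in \bigcap_n Z''_n$ there maps under $g$ into $\bigcap_n Z_n$. Your approach, by contrast, is fully self-contained: you stabilize closures, pass to an irreducible component and an affine open, and then use Noether normalization over $K$ itself to push the problem down to $\mathbb{A}^d_K$, where the uncountability of $K$ is exploited directly via the inductive avoidance argument. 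What your route buys is independence from the cited reference and from the base change to $\overline{K}$; what the paper's route buys is brevity, since the reductions and the counting are absorbed into the black box. The two arguments are morally the same at the core (both ultimately rest on the fact that a countable union of proper hypersurfaces cannot fill affine space over an uncountable field), but the packaging is quite different.
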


\begin{proof} 
Since $X$ is an algebraic variety,
it admits a finite cover  $(X_i)_{i \in I}$ by affine open subsets. 
We claim that there exists $j \in I$ such that $X_j \cap C_n \neq \varnothing$ for all $n \in \N$. 
Indeed, otherwise, for each $i \in I$,there would exist $n_i \in \N$ such that 
$X_i \cap C_n=\varnothing$ for all $n\geq n_i$. Taking $m\coloneqq \max_{i \in I} n_i$, 
we would then get 
\[
C_m =X\cap C_m= \bigcup_{i \in I} X_i \cap C_m =\varnothing,
\]
which is a contradiction. 
Hence, we get a new decreasing sequence of non-empty constructible subsets of $Y \coloneqq X_j$ by setting $C'_n\coloneqq Y \cap C_n$ for all $n\in \N$. 
Consider the base change $\Spec(\overline{K})\to \Spec(K)$ and the canonical morphism
\[
g \colon Y_{\overline{K}}=Y\times_K \Spec(\overline{K}) \to Y. 
\]
 Let $C''_n=g^{-1}(C'_n)$ for all $n\in \N$. 
 Let $Z'_n$ (resp.~$Z''_n$) denote the  set of closed points of $C'_n$ (resp.~$C''_n$). 
 Clearly, $(Z''_n)_{n\in \N}$ is a non-increasing sequence of non-empty constructible subsets of the 
 affine $\overline{K}$-algebraic set $V$ consisting  of all closed points of $Y_{\overline{K}}$.
    Observe also that, for all $n \in \N$, we have that  $Z''_n \not= \varnothing$ since 
    $C''_n \not= \varnothing$ and $C_n''$ is Jacobson by Lemma~\ref{l:constructible-in-Jacobson}.
 By ~\cite[Proposition~4.4]{cc-algebraic-ca}, there exists $x\in \bigcap_{n\in \N} Z''_n$. 
 Thus $f(x) \in \bigcap_{n\in \N} Z'_n\subset \bigcap_{n\in \N} Z_n$. 
This shows that $\bigcap_{n\in \N}Z_n$ is not empty.
\end{proof}

We are now in position for proving Lemma~\ref{l:inverse-limit-seq-const}.

\begin{proof}[Proof of Lemma~\ref{l:inverse-limit-seq-const}]
As $f_{n m}$ is a scheme morphism of finite type by Lemma~\ref{l:mor-var-finite-type} and the schemes $X_n, X_m$ are Noetherian by Lemma~\ref{l:prop-alg-var}.(i),
Chevalley's theorem (cf.~Theorem~\ref{t:chevalley}) implies that 
the set  $f_{n m}(C_m)$ is a constructible subset of $X_n$ for all $n,m \in \N$ with $n \leq m$.   
Therefore, for each $n\in \N$, the sequence $(f_{n m}(C_m))_{n\leq m}$ is a non-increasing sequence of 
non-empty constructible subsets of  $X_n$.
On the other hand, $f_{n m}(Z_m)$ is the set of closed points of $f_{n m}(C_m)$ by 
Lemma~\ref{l:mor-var-closed-points}.(v). This shows in particular that  $f_{n m}(Z_m) \subset Z_n$.
Let $\varphi_{n m} \colon Z_m \to Z_n$ denote the restriction of $f_{n m}$ to $Z_m$
and let $(Z_n',\varphi_{n m}')$ denote the universal inverse sequence associated with the inverse sequence $(Z_n,\varphi_{n m})$.
By applying Lemma~\ref{l:nested-sequence}, we deduce  that 
\[
Z'_n= \bigcap_{n \leq m} \varphi_{nm}(Z_m) = \bigcap_{n \leq m} f_{nm}(Z_m) \neq \varnothing.
\]
Therefore, to prove that $\varprojlim Z_n \neq \varnothing$, it suffices to show that all transition maps 
$\varphi_{n m}' \colon Z_m' \to Z_n'$ are surjective.
\par
To see this, let $n \leq m$ and let $z_n' \in Z_n'$.
Then, for all $n \leq k$, we have that $z'_n \in \varphi_{n k} (Z_k)$ and thus there exists $y_k\in Z_k$ such that $z_n' = \varphi_{nk}(y_k)$.
For $m \leq k$, let $s_k \coloneqq \varphi_{m k}(y_k)$. 
We then have
\[
\varphi_{n m}(s_k) =\varphi_{n m}\circ \varphi_{mk}(y_k)=\varphi_{n k}(y_k)=z'_n.
\]
Therefore, for all $m \leq k$, 
the set $\varphi_{nm}^{-1}(z'_n) \cap \varphi_{mk}(Z_k)$ is non-empty. 
Observe that $f_{nm}^{-1}(z'_n)\cap f_{mk}(C_k)$ is a constructible subset of $X_m$ since 
$f_{mk}(C_k)$ is constructible by Chevalley's theorem and $f_{nm}^{-1}(z'_n)$ is closed since $z'_k$ is a closed point. 
Note also that $\varphi_{nm}^{-1}(z'_n)\cap \varphi_{mk}(Z_k)$ is the set of closed point of 
$f_{nm}^{-1}(z'_n)\cap f_{mk}(C_k)$ by Assertions (v) and (vi) of Lemma~\ref{l:mor-var-closed-points}. 
By applying again Lemma~\ref{l:nested-sequence}, we obtain
\[
\varphi_{nm}^{-1}(z'_n)=\bigcap_{m \leq k} (\varphi_{nm}^{-1}(z'_n)\cap \varphi_{mk}(Z_k)) \neq \varnothing.
\]
This shows that $\varphi_{n m}' = f_{nm}\vert_{Z_m'} \colon Z'_m \to Z'_n$ is surjective for all 
$n\leq m$. 
Thus $\varprojlim Z_n \neq \varnothing$.
\end{proof}

\bibliographystyle{siam}

\begin{thebibliography}{10}

\bibitem{an-rigid}
{\sc J.~An}, {\em Rigid geometric structures, isometric actions, and algebraic
  quotients}, Geom. Dedicata, 157 (2012), pp.~153--185.

\bibitem{ax-injective}
{\sc J.~Ax}, {\em Injective endomorphisms of varieties and schemes}, Pacific J.
  Math., 31 (1969), pp.~1--7.

\bibitem{cc-israel}
{\sc T.~Ceccherini-Silberstein and M.~Coornaert}, {\em Injective linear
  cellular automata and sofic groups}, Israel J. Math., 161 (2007), pp.~1--15.

\bibitem{cc-artinian}
\leavevmode\vrule height 2pt depth -1.6pt width 23pt, {\em On the surjunctivity
  of {A}rtinian linear cellular automata over residually finite groups}, in
  Geometric group theory, Trends Math., Birkh\"auser, Basel, 2007, pp.~37--44.

\bibitem{induction}
\leavevmode\vrule height 2pt depth -1.6pt width 23pt, {\em Induction and
  restriction of cellular automata}, Ergodic Theory Dynam. Systems, 29 (2009),
  pp.~371--380.

\bibitem{ca-and-groups-springer}
\leavevmode\vrule height 2pt depth -1.6pt width 23pt, {\em Cellular automata
  and groups}, Springer Monographs in Mathematics, Springer-Verlag, Berlin,
  2010.

\bibitem{cc-algebraic-ca}
\leavevmode\vrule height 2pt depth -1.6pt width 23pt, {\em On algebraic
  cellular automata}, J. Lond. Math. Soc. (2), 84 (2011), pp.~541--558.

\bibitem{gorz}
{\sc U.~G\"ortz and T.~Wedhorn}, {\em Algebraic geometry {I}}, Advanced
  Lectures in Mathematics, Vieweg + Teubner, Wiesbaden, 2010.
\newblock Schemes with examples and exercises.

\bibitem{gottschalk}
{\sc W.~Gottschalk}, {\em Some general dynamical notions}, in Recent advances
  in topological dynamics ({P}roc. {C}onf. {T}opological {D}ynamics, {Y}ale
  {U}niv., {N}ew {H}aven, {C}onn., 1972; in honor of {G}ustav {A}rnold
  {H}edlund), Springer, Berlin, 1973, pp.~120--125. Lecture Notes in Math.,
  Vol. 318.

\bibitem{gromov-esav}
{\sc M.~Gromov}, {\em Endomorphisms of symbolic algebraic varieties}, J. Eur.
  Math. Soc. (JEMS), 1 (1999), pp.~109--197.

\bibitem{grothendieck-ega-3}
{\sc A.~Grothendieck}, {\em \'{E}l\'ements de g\'eom\'etrie alg\'ebrique.
  {III}. \'{E}tude cohomologique des faisceaux coh\'erents. {I}}, Inst. Hautes
  \'Etudes Sci. Publ. Math.,  (1961), p.~167.

\bibitem{grothendieck-20-1964}
\leavevmode\vrule height 2pt depth -1.6pt width 23pt, {\em \'{E}l\'ements de
  g\'eom\'etrie alg\'ebrique. {IV}. \'{E}tude locale des sch\'emas et des
  morphismes de sch\'emas. {I}}, Inst. Hautes \'Etudes Sci. Publ. Math.,
  (1964), p.~259.

\bibitem{ega-4-3}
\leavevmode\vrule height 2pt depth -1.6pt width 23pt, {\em \'{E}l\'ements de
  g\'eom\'etrie alg\'ebrique. {IV}. \'{E}tude locale des sch\'emas et des
  morphismes de sch\'emas. {III}}, Inst. Hautes \'Etudes Sci. Publ. Math.,
  (1966), p.~255.

\bibitem{grothendieck-ega-1}
\leavevmode\vrule height 2pt depth -1.6pt width 23pt, {\em El\'ements de
  g\'eom\'etrie alg\'ebrique. {I}}, vol.~166 of Grundlehren der Mathematischen
  Wissenschaften [Fundamental Principles of Mathematical Sciences],
  Springer-Verlag, Berlin, 1971.

\bibitem{harris}
{\sc J.~Harris}, {\em Algebraic geometry}, vol.~133 of Graduate Texts in
  Mathematics, Springer-Verlag, New York, 1995.
\newblock A first course, Corrected reprint of the 1992 original.

\bibitem{hartshorne}
{\sc R.~Hartshorne}, {\em Algebraic geometry}, Springer-Verlag, New York, 1977.
\newblock Graduate Texts in Mathematics, No. 52.

\bibitem{lawton}
{\sc W.~H. Lawton}, {\em Note on symbolic transformation groups}, Not. Am.
  Math. Soc., 19 (1972), pp.~A--375.

\bibitem{liu-alg-geom}
{\sc Q.~Liu}, {\em Algebraic geometry and arithmetic curves}, vol.~6 of Oxford
  Graduate Texts in Mathematics, Oxford University Press, Oxford, 2002.
\newblock Translated from the French by Reinie Ern\'e, Oxford Science
  Publications.

\bibitem{milne}
{\sc J.~S. Milne}, {\em Algebraic Geometry (v5.20)}, 2009.
\newblock Available at www.jmilne.org/math/.

\bibitem{nowak}
{\sc K.~Nowak}, {\em Injective endomorphisms of algebraic varieties}, Math.
  Ann., 299 (1994), pp.~769--778.

\bibitem{stone-1979}
{\sc A.~H. Stone}, {\em Inverse limits of compact spaces}, General Topology
  Appl., 10 (1979), pp.~203--211.

\bibitem{vakil}
{\sc R.~Vakil}, {\em {\itshape MATH 216: Foundations of Algebraic Geometry}}.

\bibitem{weiss-sgds}
{\sc B.~Weiss}, {\em Sofic groups and dynamical systems}, Sankhy\=a Ser. A, 62
  (2000), pp.~350--359.
\newblock Ergodic theory and harmonic analysis (Mumbai, 1999).

\end{thebibliography}

\end{document}